\title[{\tiny perverse coherent sheaves on blow-ups}]
{birational geometry of moduli spaces of perverse coherent sheaves on blow-ups}
\date{}
\author{Naoki Koseki}
\theoremstyle{plain}
\newtheorem{thm}{Theorem}[section]
\newtheorem{prop}[thm]{Proposition}
\newtheorem{lem}[thm]{Lemma}
\newtheorem*{thm*}{Theorem}
\theoremstyle{definition}
\newtheorem{defin}[thm]{Definition}
\newtheorem*{NaC}{Notation and Convention}
\newtheorem*{ACK}{Acknowledgement}
\theoremstyle{remark}
\newtheorem{rmk}[thm]{Remark}
\newtheorem{ex}[thm]{Example}
\DeclareMathOperator{\ch}{ch}
\DeclareMathOperator{\rk}{rk}
\DeclareMathOperator{\Per}{Per}
\DeclareMathOperator{\id}{id}
\newcommand{\dR}{\mathbf{R}}
\newcommand{\dL}{\mathbf{L}}
\newcommand{\bP}{\mathbb{P}}
\newcommand{\bC}{\mathbb{C}}
\newcommand{\bZ}{\mathbb{Z}}
\newcommand{\mcO}{\mathcal{O}}
\newcommand{\mcE}{\mathcal{E}}
\newcommand{\mcH}{\mathcal{H}}
\newcommand{\mcN}{\mathcal{N}}
\newcommand{\mcS}{\mathcal{S}}
\newcommand{\mcQ}{\mathcal{Q}}
\newcommand{\mcV}{\mathcal{V}}
\newcommand{\mcW}{\mathcal{W}}
\newcommand{\mcT}{\mathcal{T}}
\newcommand{\mcP}{\mathcal{P}}
\newcommand{\hatP}{\hat{\mathbb{P}}}
\newcommand{\hatC}{\hat{\mathbb{C}}}
\DeclareMathOperator{\Hom}{Hom}
\DeclareMathOperator{\Tot}{Tot}
\DeclareMathOperator{\Coh}{Coh}
\DeclareMathOperator{\ext}{ext}
\DeclareMathOperator{\Ext}{Ext}
\DeclareMathOperator{\End}{End}
\DeclareMathOperator{\Image}{Image}
\DeclareMathOperator{\Hilb}{Hilb}
\DeclareMathOperator{\Sym}{Sym}
\DeclareMathOperator{\codim}{codim}
\DeclareMathOperator{\GL}{GL}
\DeclareMathOperator{\Bl}{Bl}
\DeclareMathOperator{\Exc}{Exc}
\DeclareMathOperator{\pt}{pt}
\DeclareMathOperator{\Gr}{Gr}
\begin{document}
\begin{abstract}
In order to study the wall-crossing formula 
of Donaldson type invariants on the blown-up plane, 
Nakajima-Yoshioka constructed 
a sequence of blow-up/blow-down diagrams connecting 
the moduli space of torsion free framed sheaves on projective plane, 
and that on its blow-up. 
In this paper, we prove that 
Nakajima-Yoshioka's diagram 
realizes the minimal model program. 
Furthermore, we obtain a fully-faithful embedding 
between the derived categories of these moduli spaces. 
\end{abstract}

\maketitle

\setcounter{tocdepth}{1}
\tableofcontents

\section{Introduction}
\subsection{Main result}
The wall crossing formulas of 
Donaldson-type invariants 
have been investigated 
in various papers. 
For example, the behavior of Donaldson invariants 
of the moduli spaces of rank two stable sheaves 
on rational surfaces under the variations of 
polarizations are studied by 
Ellingsrud-G\"{o}ttsche \cite{eg95}, 
Friedman-Qin \cite{fq95}. 
As another example, 
Nakajima-Yoshioka \cite{ny11a, ny11b, ny11c} 
studied the difference of invariants 
of framed sheaves on the projective plane $\bP^2$ 
and that on its blow-up $\hatP^2$ at a point. 
To do so, they constructed a sequence of diagrams 
\begin{equation} \label{eq:nyintro}
\xymatrix{
&\cdots &M^m(v) \ar[ld] \ar[rd]_{\xi_{m}^-} 
& &M^{m+1}(v) \ar[ld]^{\xi_{m}^+} \ar[rd] &\cdots \\
& & &M^{m, m+1}(v) & & 
}
\end{equation}
connecting the moduli space on $\bP^2$ 
and that on the blow-up $\hatP^2$. 
The intermediate models $M^m(v)$ 
also have modular interpretations; 
they are the moduli spaces of {\it $m$-stable sheaves} 
(see Definition \ref{defin:m-st}). 
Similarly, the space $M^{m, m+1}(v)$ parametrizes 
$m$-stable and $(m+1)$-stable sheaves 
with various Chern characters. 

In these examples, 
the moduli spaces appearing in wall crossing diagrams 
are smooth and birational to each other. 
In fact, in the case studied in \cite{eg95, fq95}, 
the moduli spaces are connected by standard flips. 
In the case of \cite{ny11a, ny11b, ny11c}, 
their geometry is more complicated. 
Indeed, Nakajima-Yoshioka proved that 
the contracted loci of the morphisms 
$\xi_{m}^{\pm}$ have stratifications 
(called Brill-Noether stratifications) 
such that each stratum has the structure 
of a Grassmannian bundle. 

The aim of this paper is the further study of 
birational geometric properties of the diagram (\ref{eq:nyintro}). 
In particular, we show that it 
is an instance of the minimal model program. 
\begin{thm}[Theorem \ref{mmp}] \label{mmpintro}
The diagram (\ref{eq:nyintro}) realizes a minimal model program 
for the moduli space of framed torsion free sheaves 
on the blow-up $\hatP^2$. 
The program ends with the minimal model, 
the moduli space of framed torsion free sheaves on $\bP^2$, 
which is a hyper-K$\ddot{a}$hler manifold. 
\end{thm}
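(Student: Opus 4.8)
The plan is to run the minimal model program starting from the blow-up side of the diagram (\ref{eq:nyintro}) and to decrease $m$ step by step, identifying each elementary transformation $M^{m+1}(v)\xrightarrow{\xi_m^+}M^{m,m+1}(v)\xleftarrow{\xi_m^-}M^m(v)$ with a single step of the program. Since the $M^m(v)$ are smooth and the $\xi_m^{\pm}$ are birational morphisms onto the common target $M^{m,m+1}(v)$, the first task is to show that $\xi_m^+$ contracts a single $K$-negative extremal ray of $M^{m+1}(v)$, and that $\xi_m^-$ produces the resulting flip or divisorial contraction $M^m(v)$. For $m\gg 0$ the maps $\xi_m^{\pm}$ are isomorphisms and $M^m(v)$ is the moduli space of framed torsion free sheaves on $\hat{\mathbb{P}}^2$, which fixes the starting point of the program; the asserted endpoint is $M^0(v)$, the moduli space on $\bP^2$.

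First I would pin down the geometry of the contracted loci. By the Brill-Noether stratification each $\xi_m^{\pm}$ contracts a locus that fibers, stratum by stratum, as a Grassmannian bundle over a lower-dimensional moduli space, and the two sides $M^{m}(v)$ and $M^{m+1}(v)$ carry the ``dual'' Grassmannian bundles over the same base inside $M^{m,m+1}(v)$. I would use this to read off two things: the codimensions of the exceptional loci, which decide whether the step is a flip (both codimensions $\geq 2$) or a divisorial contraction; and the curve classes generating the contracted ray, namely the classes of lines in the Grassmannian fibers. Extremality, together with $\rho(M^{m+1}(v)/M^{m,m+1}(v))=1$, then follows because $M^{m,m+1}(v)$ is obtained by collapsing exactly these fibers.

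The analytic heart of the argument is the discrepancy computation. Using the deformation theory of framed sheaves, so that the tangent space at $[\mathcal{E}]$ is the relevant $\Ext^1$ group and $K_{M^m(v)}$ is the determinant of the two-term obstruction complex, I would compute the intersection number of $K_{M^{m+1}(v)}$ with a line $C$ in a Grassmannian fiber. Concretely, restricting the universal family to the stratum and applying Grothendieck-Riemann-Roch, the degree $K_{M^{m+1}(v)}\cdot C$ is expressed through the Chern characters of the sheaves being modified across the wall and the dimensions of the Grassmannian factors; the goal is to verify that this degree is negative on the $M^{m+1}(v)$ side and positive on the $M^m(v)$ side. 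This sign computation, which simultaneously certifies $K$-negativity of $\xi_m^+$ and relative ampleness of $K_{M^m(v)}$ over $M^{m,m+1}(v)$, is where I expect the main difficulty to lie, since it requires tracking the precise change of Chern character dictated by $m$-stability and matching it against the normal-bundle contribution of the Grassmannian fibers.

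Finally I would address termination and the endpoint. For fixed $v$ only finitely many walls are nontrivial, so the program stops after finitely many steps at $M^0(v)$. This space is a smooth holomorphic symplectic (hyper-K\"{a}hler) variety, so $K_{M^0(v)}=0$ is nef and $M^0(v)$ is a minimal model; combined with the step-by-step $K$-negativity established above, this shows that the diagram (\ref{eq:nyintro}) is a run of the minimal model program terminating in the asserted minimal model.
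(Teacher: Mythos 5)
Your outline reproduces the paper's overall strategy---each wall-crossing is an MMP step, $K$-negativity of $\xi_m^+$ and $K$-positivity of $\xi_m^-$ are to be certified on the Grassmannian fibers, the flip/divisorial dichotomy is read off from codimensions, and the program terminates at the hyper-K\"ahler space $M^0(v)\cong M_{\bP^2}(r,0,\ch_2)$---but it stops exactly where the proof has to begin: the sign computation you defer as ``where I expect the main difficulty to lie'' \emph{is} the mathematical content of the theorem, and nothing in your proposal shows it comes out with the right sign. Concretely, the paper (i) proves the normal bundle formula $\mcN_{G^{\pm}_{i}/M}\cong \mcS^{\pm}_{i}\otimes \pi^{\pm*}\mcW^{\mp}_{i}$ (Theorem \ref{thm:normal}), which requires constructing the map $\delta$ out of $\mcT_{M^m(v)}|_{G^-_i}$, proving its fiberwise surjectivity --- this needs the obstruction vanishing $\Ext^2(E',E'(-l_\infty))=0$ of Lemma \ref{lem:vanobstr} --- and a dimension count (Lemma \ref{lem:dimext}); (ii) uses adjunction and the local model to identify $K_{M^{m+1}(v)}$ restricted to a fiber $\Gr(i,W^+)$ with $\det(S^+)^{\otimes(\rk\mcW^+_i-\rk\mcW^-_i)}$ as in Lemma \ref{lem:KofY}, reducing everything to the inequality $\rk\mcW^{+}_{i}>\rk\mcW^{-}_{i}$; and (iii) establishes that inequality by the Riemann--Roch identity $\ext^1(\mcO_C(-m-1),E')-\ext^1(E',\mcO_C(-m-1))=\chi(E',\mcO_C(-m-1))-\chi(\mcO_C(-m-1),E')=r>0$ (equation (\ref{eq:difference})). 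The same data gives $\codim(G^+_i,M^{m+1}(v))=i(rm+i)$, whence $\xi_m^{\pm}$ is small for $m\geq 1$ and $\xi_0^+$ is divisorial. Your proposed GRR computation of $K\cdot C$ on lines in the fibers is a viable substitute for (i)--(ii), but it too needs the $\Ext^2$-vanishing to express $c_1$ of the tangent sheaf through the universal family, and it must land on the identity in (iii); as written, none of this is carried out, so $K$-negativity, $K$-positivity, and the flip/divisorial classification all remain unverified assertions.

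Two further points. First, the endpoint: the MMP step at the wall $m=0$ is the divisorial contraction $\xi_0^+\colon M^1(v)\to M^{0,1}(v)$, whose target is a priori $M^{0,1}(v)$, not $M^0(v)$; to conclude the program ends at the moduli space on $\bP^2$ you need that $\xi_0^-$ is an isomorphism --- which holds because $\rk\mcW^-_i=\ext^1(E',\mcO_C(-1))=i$ when $m=0$, so each bundle $\Gr(i,\mcW^-_i)\to M^{0,1}(v)_i$ is trivial (case (2) of Lemma \ref{lem:KofY}) --- together with the isomorphism $M^0(v)\cong M_{\bP^2}(r,0,\ch_2)$, which uses $k=0$ (Theorem \ref{thm:adhm}(2)). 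Your proposal silently conflates $M^{0,1}(v)$ with $M^0(v)$ at this last step. Second, the extremality and relative Picard rank one statements you plan to prove are unnecessary under the paper's conventions: by Definitions \ref{def:contr} and \ref{def:mmp}, a flip is simply a $K$-positive birational contraction over the same base, so that portion of your program is extra work the paper's formulation deliberately avoids.
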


We will also verify Bondal-Orlov \cite{bo95}, 
Kawamata's \cite{kaw02} D/K equivalence conjecture 
for these moduli spaces: 

\begin{thm}[Theorem \ref{derived}]
For each integer $m \in \mathbb{Z}_{\geq 0}$, 
we have a fully faithful embedding 
\[
D^b(M^m(v)) \hookrightarrow D^b(M^{m+1}(v))
\]
between the derived categories. 
In particular, we have an embedding 
\[
D^b(M_{\bP^2}) \hookrightarrow D^b(M_{\hatP^2}), 
\]
where $M_{\bP^2}, M_{\hatP^2}$ 
denote the moduli spaces of torsion free framed sheaves 
on $\bP^2, \hatP^2$, respectively. 
\end{thm}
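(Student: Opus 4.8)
The plan is to construct the fully faithful embedding by exploiting the semiorthogonal structure induced by the Brill–Noether stratifications of the wall-crossing morphisms $\xi_m^{\pm}$, following the general philosophy that a flip-type diagram whose exceptional loci are Grassmannian bundles induces a fully faithful functor on derived categories. The key geometric input, assumed from the earlier discussion, is that each $\xi_m^{\pm}\colon M^m(v), M^{m+1}(v) \to M^{m,m+1}(v)$ is a projective morphism between smooth varieties, that the contracted loci stratify into pieces carrying Grassmannian-bundle structures, and that the diagram realizes a step of the minimal model program (Theorem~\ref{mmpintro}), so that the relevant canonical-class inequality $K_{M^{m+1}(v)} \geq (\xi_m^+)^* K$-type comparison holds. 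This places us exactly in the setting where the D/K conjecture predicts an embedding in the direction of increasing discrepancy, i.e.\ from $M^m(v)$ into $M^{m+1}(v)$.

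First I would identify, for each $m$, a single birational correspondence realizing the functor. The natural candidate is the fiber product $Z := M^m(v) \times_{M^{m,m+1}(v)} M^{m+1}(v)$, or a suitable resolution $\widetilde{Z}$ thereof, together with its two projections $p\colon \widetilde{Z} \to M^m(v)$ and $q\colon \widetilde{Z} \to M^{m+1}(v)$. The functor would then be the Fourier–Mukai transform
\[
\Phi := \dR q_* \circ \dL p^* \colon D^b(M^m(v)) \longrightarrow D^b(M^{m+1}(v)),
\]
possibly twisted by a line bundle or a relative dualizing complex chosen so that the kernel restricts to the structure sheaf over the locus where $\xi_m^{\pm}$ are isomorphisms. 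To prove full faithfulness I would verify the Bondal–Orlov criterion: compute $\Hom_{D^b(M^{m+1}(v))}(\Phi(E), \Phi(F)[i])$ for $E, F$ ranging over a spanning class (for instance the structure sheaves of points, or the objects $\mcO_x$), and show these agree with $\Hom_{D^b(M^m(v))}(E, F[i])$. Over the open locus where $\xi_m^{\pm}$ are isomorphisms this is immediate; the content is concentrated over the Brill–Noether strata.

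The computation over the strata is where the Grassmannian-bundle structure becomes essential. On each stratum the morphism $\xi_m^{\pm}$ looks like the projection of a Grassmannian bundle, and I would reduce the local analysis to the well-understood case of the derived category of a Grassmannian (or a projective bundle) via Kapranov's exceptional collections and the projective/Grassmannian bundle formula. Concretely, the pushforward along a Grassmannian-bundle contraction is controlled by the cohomology of the tautological bundles, and a dimension/discrepancy count — supplied by the minimal model program statement of Theorem~\ref{mmpintro} — guarantees the vanishing of the ``error'' $\Ext$-groups needed for the canonical restriction map
\[
\Hom(E, F[i]) \longrightarrow \Hom(\Phi(E), \Phi(F)[i])
\]
to be an isomorphism rather than merely injective. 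Iterating the embedding $D^b(M^m(v)) \hookrightarrow D^b(M^{m+1}(v))$ over all $m \geq 0$ and composing with the identification of the two ends of the diagram with $M_{\bP^2}$ and $M_{\hatP^2}$ yields the final embedding $D^b(M_{\bP^2}) \hookrightarrow D^b(M_{\hatP^2})$.

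\emph{The hard part} will be the stratum-by-stratum $\Ext$-group computation, and in particular handling the fact that the Brill–Noether stratification is genuinely nested rather than a single smooth blow-up center. Controlling the interaction between adjacent strata — ensuring that the Fourier–Mukai kernel has no unwanted higher cohomology along the deeper strata, and that the local Grassmannian-bundle model glues into a global statement — is the principal obstacle. I expect this to require a careful choice of kernel (likely incorporating a twist by tautological classes pulled back from the Grassmannian bundles) together with a vanishing theorem on the fibers, so that the semiorthogonal decomposition of $D^b(M^{m+1}(v))$ induced by the contraction has $\Phi(D^b(M^m(v)))$ as precisely one of its pieces.
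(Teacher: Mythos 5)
You have chosen the right kernel (the structure sheaf of the fiber product $W^m = M^m(v) \times_{M^{m,m+1}(v)} M^{m+1}(v)$, with no twist needed) and the right direction of embedding, but the mechanism you propose for full faithfulness --- the Bondal--Orlov pointwise criterion plus a stratum-by-stratum $\Ext$ computation --- is exactly the part you leave open, and it is not how the argument can be made to work. Two concrete problems: first, the moduli spaces here are only quasi-projective, so the spanning-class/pointwise criterion (which relies on properness and Serre duality) does not apply off the shelf; second, the ``dimension/discrepancy count supplied by Theorem \ref{mmpintro}'' is not a vanishing mechanism, and the nested Brill--Noether strata make a direct fiberwise computation intractable --- an obstacle you acknowledge but do not resolve. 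Kapranov's collections for Grassmannian bundles also do not suffice: the local model is not a Grassmannian bundle but a Grassmannian \emph{flip} (total spaces of $S^{\pm} \otimes W^{\mp}$ over $\Gr(i, W^{\pm})$ contracting to a determinantal variety), and full faithfulness for that local model is itself a nontrivial theorem, not a consequence of the bundle formula.

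The paper's proof supplies precisely the two ideas your sketch is missing. First, because the kernel $\mcO_{W^m}$ is supported on the fiber product over $U = M^{m,m+1}(v)$, the cone $\mcQ$ of the adjunction morphism $\mcO_{\Delta} \to \mcP_{\dR} * \mcP$ is supported on $M^m(v) \times_U M^m(v)$, and since completion is faithfully flat, the vanishing $\mcQ = 0$ can be checked after base change to the formal completion $\hat{U}_p$ at each point $p \in U$; this reduces the global statement to a formal-local one, with no spanning classes and no stratum bookkeeping. Second, the formal-local model is identified via the normal bundle computation of Theorem \ref{thm:normal}: the fiber $G^+(E') = \Gr(i, W^+)$ has normal bundle $S^+ \otimes W^- \oplus \mcO^{\oplus \dim U}$, whose dual is globally generated hence nef, and the Bott-type vanishings of Lemma \ref{lem:vanGrS} allow one to invoke the Andreatta--Wi\'{s}niewski theorem to conclude that the formal neighborhood of this fiber in $M^{m+1}(v)$ equals that of the zero section in the normal bundle, i.e.\ of $\Gr(i, W^+)$ in $Y^+ \times U$; uniqueness of flips then identifies the $M^m(v)$ side with $Y^- \times U$. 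Note this handles the nesting you worry about automatically: the determinantal variety $Z = Z_i$ contains all deeper strata $Z_j$, $j<i$, so no separate per-stratum analysis or gluing is required. With the local model in hand, full faithfulness is quoted from Buchweitz--Leuschke--Van den Bergh (Theorem \ref{thm:blvdb}), using $\rk \mcW^+_i - \rk \mcW^-_i = r > 0$. The heavy lifting is thus the normal bundle theorem, which your proposal never invokes; without it (or some substitute identification of formal neighborhoods) your plan stalls exactly at the step you flag as the hard part.
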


So we get an interesting relationship 
among wall-crossing formulas for Donaldson type invariants, 
birational geometry, and derived categories. 

We can also consider the moduli space 
of Gieseker stable sheaves 
on a smooth projective surface 
and that on its blow-up 
(see Theorem \ref{thm:proj} for the precise statement): 

\begin{thm}
Let $S$ be a smooth projective surface, 
$\hat{S}$ the blow-up of $S$ at a point. 
Under certain numerical conditions, 
the MMP for the moduli space $M_{\hat{S}}$ 
of Gieseker stable sheaves on $\hat{S}$ 
is reduced to MMP for the moduli $M_{S}$ on $S$. 
Furthermore, there exists a fully faithful embedding 
\[
D^b(M_{S})
\hookrightarrow 
D^b(M_{\hat{S}}) 
\] 
between their derived categories. 
\end{thm}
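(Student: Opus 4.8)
The plan is to reduce the assertion to the framed case over the plane, which is already settled in Theorem \ref{mmp} and Theorem \ref{derived}, by exploiting the fact that the whole wall-crossing phenomenon is local near the exceptional curve. Write $\pi \colon \hat{S} \to S$ for the blow-up at a point $p \in S$, with exceptional curve $C \cong \bP^1$. First I would introduce, for each integer $m \geq 0$, the perverse t-structure on $D^b(\hat{S})$ relative to $\pi$ and the associated notion of $m$-stability (in the sense of Definition \ref{defin:m-st}), together with the moduli space $M^m$ of $m$-stable objects of the prescribed Chern character. Under the numerical hypotheses of Theorem \ref{thm:proj} --- chosen so that the relevant obstruction groups $\Ext^2$ vanish and the Chern character remains in a stable chamber --- each $M^m$ is a smooth projective variety, with $M^m = M_{\hat{S}}$ for $m \gg 0$, while $M^0$ is identified, via the pullback $\dL\pi^{*}$ and its adjoint $\dR\pi_{*}$, with the moduli space $M_{S}$ on the base. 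The essential point is that the $m$-stability condition only tests subobjects supported near $C$, so that \'etale-locally around $p$ the situation is formally identical to the one analyzed by Nakajima--Yoshioka.

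Next I would construct, for each $m$, the elementary diagram
\[
\xymatrix{
M^m \ar[rd] & & M^{m+1} \ar[ld] \\
& \bar{M}^{m,m+1} &
}
\]
contracting the two birational models to a common space, and show that its exceptional loci carry the Brill--Noether stratification by Grassmannian bundles. Since the deformation theory of an $m$-stable object is governed \'etale-locally near $p$ by the same quiver and linear-algebra data as in the framed setting, this stratification, together with the normal bundle computation, transports verbatim from the proof of Theorem \ref{mmp}. From this local model I would compute the discrepancy of $K_{M^m}$ across each wall and verify that the elementary wall-crossings are flips, or in boundary cases divisorial contractions, oriented so that the program proceeds $K$-negatively from $M_{\hat{S}}$ toward $M_{S}$ as $m$ decreases. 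Running the MMP on $M_{\hat{S}}$ therefore first carries it through this chain to $M^0 \cong M_{S}$, after which any remaining steps coincide with the MMP on $M_{S}$; this is precisely the asserted reduction.

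For the derived statement, I would attach to each elementary wall-crossing a fully faithful functor $D^b(M^m) \hookrightarrow D^b(M^{m+1})$. On each Grassmannian-bundle stratum this is supplied by the Kapranov/Orlov-type semiorthogonal decomposition of a Grassmannian bundle, exactly as in the proof of Theorem \ref{derived}; the mutations then assemble these pieces so as to realize $D^b(M^m)$ as an admissible subcategory of $D^b(M^{m+1})$. Composing over $m$ and using the identification $M^0 \cong M_{S}$ yields the desired embedding
\[
D^b(M_{S}) \hookrightarrow D^b(M_{\hat{S}}).
\]

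The main obstacle is the passage from the local framed picture over $\bP^2$ to the global picture over an arbitrary surface $S$. On $\bP^2$ the framing forces rigidity and the vanishing of higher obstructions, so that smoothness of every $M^m$ and the precise Grassmannian-bundle shape of the exceptional loci are automatic; for general $S$ these facts must be secured by the numerical conditions, and one must control the obstruction classes in $\Ext^2$ --- which now feel the canonical bundle $K_S$ --- to guarantee that the local model near $C$ really dictates the global geometry. Checking that the discrepancy computation survives this globalization, in particular that no contribution from $K_S$ destroys the $K$-negativity of the wall-crossings, is the delicate step, and is exactly where the hypotheses of Theorem \ref{thm:proj} are needed.
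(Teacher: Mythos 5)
Your overall skeleton --- a chain of $m$-stable moduli spaces interpolating between $M_H(w)$ and $M_{f^*H-\epsilon C}(v)$, wall-crossing diagrams with Grassmannian-bundle Brill--Noether strata, a normal bundle computation giving $K$-negativity, and a formal-local treatment of the derived embedding --- is the same as the paper's. The genuine gap is your transfer mechanism. You assert that ``\'etale-locally around $p$ the situation is formally identical'' to the framed picture and that the deformation theory ``transports verbatim'' from the proof of Theorem \ref{mmp}. It does not: while the groups $\Ext^*(\mcO_C(-m-1),E')$ and $\Ext^*(E',\mcO_C(-m-1))$ controlling the $\xi^\pm$-fibers are indeed local near $C$, the deformation theory of $E$ on $\hat{S}$ is governed by the global groups $\Ext^i_{\hat{S}}(E,E)$, and both the smoothness of $M^m(v)$ and the surjectivity statements in the analogue of Lemma \ref{lem:surj} need the global vanishing $\Ext^2(E,E)\cong\Hom(E,E\otimes\omega_{\hat{S}})^\vee=0$, which follows from stability together with $K_S.H<0$ (note $K_{\hat{S}}\cdot(f^*H-\epsilon C)=K_S\cdot H+\epsilon<0$ for small $\epsilon$), not from any analysis local to the exceptional curve; there is also no quiver description of sheaves on a general surface $S$. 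You flag precisely this as ``the delicate step'' at the end, but never carry it out, so the reduction is incomplete. The paper avoids the issue: all the projective-case infrastructure is already established globally in \cite{ny11b} --- smoothness under $K_S.H<0$, $w_0>0$, $\gcd(w_0,H.w_1)=1$ (Corollary 3.7), the Brill--Noether stratification (Propositions 3.31 and 3.32), the identification $M^0(v)\cong M_H(w)$ via $f_*$ (Proposition 3.3), and $M^m(v)\cong M_{f^*H-\epsilon C}(v)$ for $m\gg 0$ (Proposition 3.37) --- and with these inputs the arguments of Theorems \ref{thm:normal}, \ref{mmp} and \ref{derived} are repeated essentially word for word.

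Your derived-category step also misidentifies the key input. You propose Kapranov/Orlov-type semiorthogonal decompositions of the Grassmannian bundles assembled by mutations, described as ``exactly as in the proof of Theorem \ref{derived}.'' That is not how Theorem \ref{derived} is proved: its proof takes the Fourier--Mukai kernel $\mcO_{W^m}$ given by the structure sheaf of the fiber product, reduces full faithfulness to formal completions at points of $M^{m,m+1}(v)$, identifies each formal neighborhood with the local model of a Grassmannian flip using Theorem \ref{thm:normal}, the Andreatta--Wi\'sniewski lemma and the Bott-type vanishing of Lemma \ref{lem:vanGrS}, and then invokes the Buchweitz--Leuschke--Van den Bergh embedding (Theorem \ref{thm:blvdb}). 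For a Grassmannian flip, as opposed to a standard flip, there is no routine mutation argument producing a fully faithful functor; the substantive input is \cite{blvdb16}, and without reducing to it your derived step does not close. Once the \cite{ny11b} inputs above are in place, the same formal-local argument goes through verbatim in the projective setting, which is exactly what the paper means by ``the arguments are very similar.''
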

For instance, we can apply the above theorem 
when $S$ is a del Pezzo surface. 

\subsection{Strategy of the proof}
To prove our main result Theorem \ref{mmpintro}, 
we will compute the normal bundles of the fibers explicitly, 
following the idea from \cite{eg95, fq95}. 
Although the geometry of the diagram (\ref{eq:nyintro}) 
is more complicated compared to the one considered in \cite{eg95, fq95}, 
it turns out their method still works in our setting. 
Actually, we are able to describe 
the normal bundle of each Brill-Noether stratum explicitly.  
Then we will see that the diagram realizes the  MMP 
when we decrease the stability parameter $m \in \mathbb{Z}_{\geq 0}$. 

Furthermore, the normal bundle computation enables us 
to reduce the construction of the fully faithful embedding 
between derived categories to the formal local case; 
the latter is already handled in the paper \cite{blvdb16} 
and hence we can prove Theorem \ref{derived}.

\subsection{Relation to existing works}
In \cite{kos17}, 
the author studied birational geometry of 
the Hilbert scheme of two points on blow-ups. 
The main result of the present paper 
is an extension to the completely general setting. 

There are several works investigating 
birational geometry and derived categories of moduli spaces. 
For a standard flip between moduli spaces 
obtained in \cite{eg95, fq95}, Ballard \cite{bal17} constructed 
a semi-orthogonal decomposition (SOD) of their derived categories. 

Recently, Toda \cite{Toddbir, Todsemi} 
introduced the notion of d-critical birational geometry, 
which is a certain virtual analogue 
of usual birational geometry. 
It is shown that if two smooth varieties 
are connected by a simple d-critical flip, 
then we have an SOD 
of their derived categories. 
See \cite{kt19, Todsemi} for interesting examples of d-critical flips. 

The SODs obtained in the papers \cite{bal17}, \cite{Todsemi} 
can be considered as categorifications of 
wall crossing formulas for Donaldson type invariants, 
Donaldson-Thomas type invariants, respectively. 
It would be interesting to describe 
the semi-orthogonal complements of the embedding 
in our Theorem \ref{derived}, which would give 
a categorification of Nakajima-Yoshioka's wall crossing formula.

\subsection{Organization of the paper}
The paper is organized as follows. 
In Section \ref{sec:prel}, 
we collect some terminology and useful lemmas 
from birational geometry and derived categories. 
In Section \ref{sec:frame}, we recall 
the result of Nakajima-Yoshioka. 
In Section \ref{sec:bir}, we prove our main results. 
In Section \ref{sec:ex}, we give some explicit examples. 

\begin{ACK}
I would like to thank Professors Jim Bryan 
and Yukinobu Toda for fruitful discussions. 
This work was supported by Grant-in-Aid for JSPS Research Fellow 17J00664. 

Finally, I would like to thank the referee 
for various suggestions and comments to the previous version of this paper. 
\end{ACK}

\begin{NaC}

In this paper, we always work over 
the complex number field $\mathbb{C}$. 
\begin{itemize}
\item For a variety $X$, 
we denote by 
$D^b(X):=D^b(\Coh(X))$ 
the bounded derived category of 
coherent sheaves on $X$. 

\item For a proper morphism 
$f \colon M \to N$ 
between varieties 
and objects $E, F \in D^b(M)$, 
we denote by 
$\mathcal{E}xt^q_{f}(E, F)$ 
the $q$-th derived functor of 
$\mathcal{H}om_{f}(E, F):=
f_{*}\mathcal{H}om(E, F)$. 

\item 
For coherent sheaves $E, F$ 
on a variety, we define 
$\hom(E, F):=\dim\Hom(E, F)$ 
and 
$\ext^i(E, F):=\dim\Ext^i(E, F)$. 

\item For a vector bundle $\mcV$ 
on a variety and an integer $i >0$, 
we denote by 
$\Gr(i, \mcV)$ the Grassmann bundle 
of $i$-dimensional {\it subbundles} of $\mcV$. 
\end{itemize}
\end{NaC}

\section{Preliminaries} \label{sec:prel}
\subsection{Terminologies from birational geometry}
In this subsection, 
we recall some notions from birational geometry. 
The standard reference for this subsection is \cite{km98}. 

\begin{defin} \label{def:k-neg/pos}
Let $\phi \colon X \to Z$ be a projective morphism between 
normal quasi-projective varieties. 
We say that $\phi$ is 
a {\it $K$-positive (resp. $K$-negative) contraction} 
if the following conditions hold: 
\begin{enumerate}
\item $\phi_*\mcO_X \cong \mcO_Z$, 
\item the canonical divisor $K_X$ 
(resp. the anti-canonical divisor $-K_X$) 
is $\phi$-ample. 
\end{enumerate}
\end{defin}

\begin{defin} \label{def:contr}
Let $\phi \colon X \to Z$ be a $K$-negative contraction. 
\begin{enumerate}
\item $\phi$ is called a {\it divisorial (resp. flipping) contraction} 
if it is birational and the $\phi$-exceptional locus 
has codimension one (resp. at least two). 
\item $\phi$ is called a {\it Mori fiber space} 
if we have $\dim X > \dim Z$. 
\item Assume that $\phi$ is a flipping contraction. 
Then a {\it flip} of $\phi$ is a $K$-positive birational contraction 
$\phi^+ \colon X^+ \to Z$. 
We also call the birational map $X \dashrightarrow X^+$ 
a flip. 
\end{enumerate}
\end{defin}

\begin{defin} \label{def:mmp}
Let $X$ be a quasi-projective variety with at worst terminal singularities. 
A {\it minimal model program} of $X$ is a sequence of birational maps 
\[
X=X_0 \dashrightarrow X_1 \dashrightarrow \cdots \dashrightarrow X_N,
\]
such that 
\begin{enumerate}
\item each birational map $X_i \dashrightarrow X_{i+1}$ 
is either a divisorial contraction or a flip, 
\item the variety $X_N$ is either a minimal model (i.e. $K_{X_N}$ is nef) 
or has a structure of a Mori fiber space. 
\end{enumerate}
\end{defin}

We do not give the definition of a terminal singularity, 
as we only consider smooth varieties in this paper. 
For the precise definition, see \cite[Definition 2.34]{km98}. 

The following lemma is useful for our purpose: 
\begin{lem}[{\cite[Proposition 3.7]{aw97}}] \label{lem:formal}
Let $X$ be a smooth variety, 
$\phi \colon X \to Z$ a $K$-negative contraction, 
and $F \subset X$ a smooth $\phi$-fiber. 
Assume that the following conditions hold: 
\begin{itemize}
\item the conormal bundle 
$\mcN^{\vee}:=\mcN^{\vee}_{F/X}$ is nef, 

\item $H^1(F, \mcT_{F} \otimes \Sym^i(\mcN^{\vee}))
=H^1(F, \mcN \otimes \Sym^i(\mcN^{\vee}))=0$ 
for $i \geq 1$. 
\end{itemize}

Then the formal neighborhood of $F$ in $X$ 
is isomorphic to that of $F$ in the total space of $\mcN$, 
embedded as the zero section. 
\end{lem}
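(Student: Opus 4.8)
The plan is to compare the two formal neighborhoods order by order and to show, by deformation theory, that the cohomology hypotheses kill every obstruction to extending an isomorphism from one infinitesimal neighborhood to the next. Write $I \subset \mcO_X$ for the ideal sheaf of $F$, and let $X_k$ denote the $k$-th infinitesimal neighborhood $(F, \mcO_X/I^{k+1})$; similarly, set $N:=\Tot(\mcN)$ with zero-section ideal $J$ and $N_k:=(F, \mcO_N/J^{k+1})$. Since $F$ and $X$ are both smooth, the embedding $F \hookrightarrow X$ is regular, so $I^k/I^{k+1} \cong \Sym^k \mcN^\vee$ for all $k$; the same holds tautologically for the zero section in $N$. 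In particular $X_0 = F = N_0$ and $X_1 \cong N_1$ canonically, which furnishes the base of an induction.

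For the inductive step I would suppose given an isomorphism $\phi_k \colon X_k \xrightarrow{\sim} N_k$ restricting to the identity on $F$, and ask to lift it to an isomorphism $\phi_{k+1} \colon X_{k+1} \xrightarrow{\sim} N_{k+1}$. Both $X_{k+1} \to X_k$ and $N_{k+1} \to N_k$ are square-zero extensions by the same $\mcO_F$-module $\Sym^{k+1}\mcN^\vee$. By the standard deformation theory of thickenings, the obstruction to extending $\phi_k$ lies in
\[
H^1\!\left(F, \mcT_X|_F \otimes \Sym^{k+1}\mcN^\vee\right),
\]
and once this group vanishes the set of lifts is a torsor under the corresponding $H^0$.

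The key point is that the conormal exact sequence
\[
0 \to \mcT_F \to \mcT_X|_F \to \mcN \to 0,
\]
twisted by the locally free sheaf $\Sym^{k+1}\mcN^\vee$, produces the exact sequence
\[
H^1(F, \mcT_F \otimes \Sym^{k+1}\mcN^\vee) \to H^1(F, \mcT_X|_F \otimes \Sym^{k+1}\mcN^\vee) \to H^1(F, \mcN \otimes \Sym^{k+1}\mcN^\vee).
\]
For every $k \geq 0$ we have $k+1 \geq 1$, so both outer groups vanish by the two hypotheses, whence the middle obstruction group vanishes. This explains why the assumptions are split into a $\mcT_F$-part and an $\mcN$-part: they control precisely the sub and the quotient of the restricted tangent sheaf. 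Thus $\phi_k$ lifts for every $k$, and I would then pass to the inverse limit over $k$ to obtain the desired isomorphism of formal neighborhoods $\hat{X}_F \cong \hat{N}_F$.

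The main obstacle I anticipate is the careful bookkeeping of the obstruction theory: identifying the obstruction group as $H^1(F, \mcT_X|_F \otimes \Sym^{k+1}\mcN^\vee)$ requires precision, as does arranging that the successive lifts form a coherent system (the ambiguity at each stage, living in the matching $H^0$, must be propagated consistently so that the limit exists). The hypotheses that $\phi$ is a $K$-negative contraction and that $\mcN^\vee$ is nef enter to guarantee that $F$ is a genuine contracted fiber with the expected normal geometry, so that the setup above is the correct one; the analytic engine of the argument, however, is the obstruction vanishing just described.
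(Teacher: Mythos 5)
The paper gives no proof of this lemma at all: it is imported verbatim from \cite[Proposition 3.7]{aw97}, so the only benchmark is the standard argument behind such statements, which is indeed the order-by-order deformation-theoretic induction you propose (it goes back to Hironaka--Rossi type formal-equivalence results). Your sandwich argument via the twisted sequence $0 \to \mcT_F \to \mcT_X|_F \to \mcN \to 0$ is the right mechanism, with one bookkeeping caveat: the tangent bundle in the obstruction group is that of the \emph{target}, so your group $H^1(F, \mcT_X|_F \otimes \Sym^{k+1}\mcN^\vee)$ is the obstruction to extending $\phi_k^{-1}\colon N_k \to X_k \subset X$ across $N_k \subset N_{k+1}$; if instead you extend $\phi_k$ into $\Tot(\mcN)$, the group is $H^1(F, \mcT_{\Tot(\mcN)}|_F \otimes \Sym^{k+1}\mcN^\vee)$, which vanishes even more directly since $\mcT_{\Tot(\mcN)}|_F \cong \mcT_F \oplus \mcN$. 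Also, your worry about consistency of the lifts is vacuous: each lift is chosen to extend the previous one, so the inverse limit exists by construction.

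There are, however, two genuine gaps. First, the base of your induction is wrong as stated: $X_1$ and $N_1$ are \emph{not} canonically isomorphic, and in general they are not isomorphic at all. $N_1$ is the split square-zero extension of $\mcO_F$ by $\mcN^\vee$, whereas $X_1$ is the extension $0 \to \mcN^\vee \to \mcO_X/I^2 \to \mcO_F \to 0$, whose splitting is obstructed by the class of the conormal sequence in $\Ext^1_{\mcO_F}(\Omega_F, \mcN^\vee) \cong H^1(F, \mcT_F \otimes \mcN^\vee)$. For a smooth conic $F \subset \bP^2$ one has $\Omega_{\bP^2}|_F \cong \mcO_{\bP^1}(-3)^{\oplus 2}$, so the conormal sequence $0 \to \mcO_{\bP^1}(-4) \to \mcO_{\bP^1}(-3)^{\oplus 2} \to \mcO_{\bP^1}(-2) \to 0$ does not split and neither does the first-order neighborhood; consistently $H^1(F, \mcT_F \otimes \mcN^\vee) = H^1(\bP^1, \mcO_{\bP^1}(-2)) \neq 0$ there. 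So the base case already consumes the $i=1$ instance of your hypotheses rather than being free. Second, at each stage you must check that the extended morphism is an \emph{isomorphism} onto the next infinitesimal neighborhood, not merely a morphism: one verifies that it factors through the $(k+1)$-st neighborhood (the pulled-back ideal has vanishing $(k+2)$-nd power) and that it is an isomorphism on every graded piece $\Sym^j\mcN^\vee$, $j \leq k+1$, because both graded algebras are generated in degree one and the degree-one map is an isomorphism. For $k \geq 1$ this is automatic from the inductive hypothesis, but at $k=0$ it is not: an arbitrary lift of $\id_F$ can induce any map on conormal bundles (including zero), and one needs the vanishing of $H^1(F, \mcT_F \otimes \mcN^\vee)$ once more---via surjectivity of $\Hom(\Omega_X|_F, \mcN^\vee) \to \Hom(\mcN^\vee, \mcN^\vee)$---to adjust the lift so that it induces the identity on $\mcN^\vee$. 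A smaller point: your final paragraph assigns a substantive role to $K$-negativity and nefness of $\mcN^\vee$, but neither is used anywhere in the formal argument; the two $H^1$ vanishings suffice, and those hypotheses appear only because of how the result is packaged and applied in \cite{aw97} and in this paper.
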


\subsection{Fourier-Mukai functors}
In this subsection, 
we recall the definition and basic properties of Fourier-Mukai functors. 
The standard reference is \cite{huy06}. 

\begin{defin}
Let $X, Y$ be smooth quasi-projective varieties, 
and $\mcP \in D^b(X \times Y)$ be an object 
whose support is proper over $Y$. 
The {\it Fourier-Mukai (FM) functor with kernel $\mcP$} 
is the functor $\Phi^\mcP \colon D^b(X) \to D^b(Y)$ 
defined by 
\[
D^b(X) \ni E \mapsto \Phi^\mcP(E):=
\dR p_{Y*} \left(p_X^*E \otimes^\dL \mcP \right) \in D^b(Y), 
\]
where $p_X \colon X \times Y \to X$, 
$p_Y \colon X \times Y \to Y$ 
denote the projections. 
\end{defin}
Note that, in the above definition, 
we assume the object $\mcP$ has 
proper support over $Y$ to ensure 
the associated FM functor $\Phi^\mcP$ 
preserves bounded complexes. 

Let us give a trivial example: 
\begin{ex}
Let $\mcO_{\Delta_X}$ be the structure sheaf 
of the diagonal $\Delta_X \subset X \times X$. 
Then the associated FM functor 
$\Phi^{\mcO_{\Delta_X}}$ 
coincides with the identity functor 
$\id_{D^b(X)}$. 
\end{ex}

Let $\Phi=\Phi^\mcP \colon D^b(X) \to D^b(Y)$ 
be a FM functor with kernel $\mcP$. 
By \cite[Proposition 5.9]{huy06}, 
the right adjoint functor $\Phi^\dR \colon D^b(Y) \to D^b(X)$ 
is given by the FM functor with kernel 
\[
\mcP_\dR := \mcP^\vee \otimes p_X^* \omega_X[\dim X]. 
\]

Similarly, the composition of two FM functors 
is again a FM functor: 
let $\Phi^\mcQ \colon D^b(Y) \to D^b(Z)$ be another FM functor. 
Then the composition 
$\Phi^\mcQ \circ \Phi^\mcP \colon D^b(X) \to D^b(Z)$ 
is a FM functor with kernel 
\[
\mcP * \mcQ :=\dR p_{XZ*} \left(p_{XY}^*\mcP \otimes^\dL p_{YZ}^* \mcQ \right) 
\in D^b(X \times Z), 
\]
where 
$p_{XY} \colon X \times Y \times Z \to X \times Y$, 
$p_{YZ} \colon X \times Y \times Z \to Y \times Z$, 
$p_{XZ} \colon X \times Y \times Z \to X \times Z$ 
denote the projections, 
see \cite[Proposition 5.10]{huy06}. 

Now let us consider the adjoint map 
\begin{equation} \label{eq:adjmap}
\Phi^{\mcO_{\Delta_X}}=\id_{D^b(X)} \to 
\Phi^{\mcP_\dR} \circ \Phi^\mcP=\Phi^{\mcP_R * \mcP}. 
\end{equation}
The following result tells us that 
it lifts to the morphism between FM kernels: 

\begin{prop}[{\cite[Theorem 3.2, Proposition 3.6]{al12}}]
Let $\Phi=\Phi^\mcP \colon D^b(X) \to D^b(Y)$ be a FM functor. 
Then there exists a morphism 
\begin{equation}\label{eq:adjobj}
\mcO_{\Delta_X} \to \mcP_\dR * \mcP 
\end{equation}
in $D^b(X \times X)$, 
which induces the adjoint map (\ref{eq:adjmap}). 
\end{prop}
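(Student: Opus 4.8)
The content to establish is that the \emph{functor}-level unit of adjunction lifts to a \emph{kernel}-level morphism. The subtlety is that the assignment $\mcP \mapsto \Phi^{\mcP}$ from kernels to functors is not faithful: a natural transformation between Fourier--Mukai functors need not come from a morphism of kernels, and when it does the lift need not be unique. Hence one cannot simply transport the adjunction unit $\eta\colon \id_{D^b(X)}\to \Phi^{\mcP_\dR}\circ\Phi^{\mcP}$ backward along this correspondence. My plan is instead to \emph{construct} the morphism (\ref{eq:adjobj}) directly from Grothendieck--Serre duality data, and then verify that applying $\Phi^{(-)}$ to it reproduces $\eta$.

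First I would make the convolution explicit. Working on the triple product $X\times Y\times X$ with projections $p_{12},p_{23},p_{13}$, the kernel of $\Phi^{\mcP_\dR}\circ\Phi^{\mcP}$ is
\[
\mcP_\dR * \mcP \;=\; \dR p_{13*}\!\left(p_{12}^{*}\mcP \otimes^{\dL} p_{23}^{*}\mcP_\dR\right),
\]
and substituting $\mcP_\dR=\mcP^{\vee}\otimes p_X^{*}\omega_X[\dim X]$ lets me rewrite the integrand as a $\dR\mathcal{H}om$ twisted by the relative dualizing complex of $p_{13}$ along the $Y$-direction. The candidate unit is then built from two pieces: the fibrewise coevaluation $\mcO\to \mcP^{\vee}\otimes^{\dL}\mcP$ (that is, the class of $\id_{\mcP}\in\Hom(\mcP,\mcP)$), pulled back to the triple product; and the Grothendieck trace for $p_{13}$, which collapses the integration over the $Y$-factor. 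Composing these and restricting along the diagonal produces a morphism $\mcO_{\Delta_X}\to \mcP_\dR * \mcP$ in $D^b(X\times X)$; this is the desired (\ref{eq:adjobj}).

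Next I would check that $\Phi^{(-)}$ sends this kernel morphism to the adjoint map (\ref{eq:adjmap}). Applying $\dR p_{Y*}(p_X^{*}(-)\otimes^{\dL}(-))$ and chasing through flat base change, the projection formula, and the compatibility of the Grothendieck trace with base change along the fibre squares of the triple product, the fibrewise coevaluation becomes the unit of the Serre/Grothendieck adjunction that identifies $\Phi^{\mcP_\dR}$ with the right adjoint of $\Phi^{\mcP}$ in the first place, cf.\ \cite{huy06}. Unwinding this identification yields exactly $\eta$.

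The main obstacle is coherence rather than any single computation. To make the convolution a genuine (bi)functor and the duality trace compatible with it in a way that survives the associativity and base-change isomorphisms, one must work in an enhanced setting (a DG- or stable $\infty$-enhancement of the $D^b$'s), since the bare triangulated structure does not record enough data to guarantee either the existence of the lift or its interaction with composition. This is precisely the framework of \cite{al12}: there the kernels are organized into a bicategory under convolution, the adjoint kernel $\mcP_\dR$ is shown to be an honest right adjoint \emph{inside} that bicategory (with unit and counit satisfying the triangle identities), and the induced functor-level adjunction is matched with the Serre-duality one. Invoking that structure closes the argument, the morphism (\ref{eq:adjobj}) being the unit constructed there.
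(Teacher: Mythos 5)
The paper offers no independent proof of this proposition: it is stated as a quotation of \cite[Theorem 3.2, Proposition 3.6]{al12}, and your argument, after correctly flagging the non-faithfulness of the kernel-to-functor assignment and sketching the coevaluation/Grothendieck-trace construction of the unit, likewise closes by invoking the kernel-level adjunction built in \cite{al12}. So your proposal is essentially the same approach as the paper's (deferral to Anno--Logvinenko), enriched with an accurate outline of what that reference actually proves.
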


We will use the following lemma 
in the proof of the main result: 
\begin{lem} \label{lem:formal}
Let $X, Y$ be smooth quasi-projective varieties, 
$X \to U$, $Y \to U$ projective morphisms to a variety $U$. 
Take an object $\mcP \in D^b(X \times Y)$ 
supported on the fiber product $X \times_U Y$. 
Denote by  $\mcQ \in D^b(X \times X)$ 
the cone of the morphism 
$\mcO_{\Delta_X} \to \mcP_\dR * \mcP$ 
in (\ref{eq:adjobj}). 
The following statements hold: 
\begin{enumerate}
\item The FM functor $\Phi=\Phi^\mcP$ 
is fully faithful if and only if $\mcQ=0$. 
\item For a point $p \in U$, denote by $\hat{U}_p$ 
the completion of $U$ at $p$. 
Denote by $\hat{X}_p$ 
the base change of $X \to U$ 
along $\hat{U}_p \to U$. 
Then $\mcQ=0$ if and only if 
$\mcQ \otimes_{\mcO_{X \times X}} \mcO_{\hat{X}_p \times \hat{X}_p}=0$ 
for every $p \in U$. 
\end{enumerate}

In particular, the FM functor $\Phi^\mcP$ is fully faithful 
if and only if the FM functors $\Phi^{\mcP^\wedge_p}$ 
are fully faithful for all $p \in U$, where we put 
$\mcP^\wedge_p :=
\mcP \otimes_{\mcO_{X \times Y}} \mcO_{\hat{X}_p \times \hat{Y}_p}$. 
\end{lem}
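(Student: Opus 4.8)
The plan is to prove the two numbered statements and then deduce the final assertion as a formal consequence. For part (1), I would invoke the standard criterion that the cone $\mcQ$ of the unit map $\mcO_{\Delta_X} \to \mcP_\dR * \mcP$ vanishes precisely when the unit $\id \to \Phi^\dR \circ \Phi$ is an isomorphism of functors; since $\Phi^{\mcP_\dR * \mcP}$ realizes $\Phi^\dR \circ \Phi$ by the composition formula recalled above, and the morphism between kernels induces the adjunction unit (\ref{eq:adjmap}), the vanishing $\mcQ = 0$ is equivalent to $\Phi^\dR \circ \Phi \cong \id_{D^b(X)}$. The latter is the usual characterization of full faithfulness for a functor admitting a right adjoint, so $\Phi$ is fully faithful if and only if $\mcQ = 0$. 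I would note that the hypotheses (supports proper over the relevant factors) guarantee all the functors involved are well defined on bounded complexes and that the right adjoint kernel $\mcP_\dR$ exists.

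For part (2), the key point is a descent/faithfulness statement for completion. The direction $\mcQ = 0 \Rightarrow \mcQ \otimes \mcO_{\hat X_p \times \hat X_p} = 0$ is immediate since completion is obtained by a flat base change. For the converse, I would use that $\mcQ$ is supported on $X \times_U X$, which is proper over $U$; properness over $U$ is the crucial hypothesis, because it lets me detect vanishing of $\mcQ$ fiberwise over $U$. Concretely, if $\mcQ \neq 0$ then its (set-theoretic) support is a nonempty closed subset of $X \times_U X$ mapping to a nonempty closed subset of $U$; picking a point $p$ in the image and completing at $p$ cannot kill the stalks lying over the fiber, since completion along a point of $U$ is faithfully flat on the relevant local rings and the support meets the fiber over $p$. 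Thus $\mcQ \otimes \mcO_{\hat X_p \times \hat X_p} \neq 0$, contradicting the hypothesis. I would phrase this cleanly by saying that a coherent-type object supported properly over $U$ vanishes if and only if all its completions along points of $U$ vanish, which is exactly the faithfulness of completion combined with properness.

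For the final ``In particular'' statement, I would identify the completed kernel with the kernel of the completed functor. Base change for the convolution $\mcP_\dR * \mcP$ along $\hat U_p \to U$ shows that $(\mcP_\dR * \mcP)\otimes \mcO_{\hat X_p \times \hat X_p}$ agrees with $(\mcP^\wedge_p)_\dR * \mcP^\wedge_p$, and the unit morphism is compatible with this base change, so $\mcQ \otimes \mcO_{\hat X_p \times \hat X_p}$ is the cone $\mcQ^\wedge_p$ attached to the functor $\Phi^{\mcP^\wedge_p}$. Combining parts (1) and (2): $\Phi^\mcP$ is fully faithful iff $\mcQ = 0$ iff $\mcQ \otimes \mcO_{\hat X_p \times \hat X_p} = 0$ for all $p$ iff each $\Phi^{\mcP^\wedge_p}$ is fully faithful (applying part (1) in the formal local setting).

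I expect the main obstacle to be the descent argument in part (2): making precise that vanishing after completion at every point of $U$ forces global vanishing, and that this genuinely uses properness of the support over $U$ rather than over $X$ or $Y$ individually. The compatibility of the convolution product and the adjunction-unit morphism with the flat base change $\hat U_p \to U$ is conceptually routine but requires care, since it involves commuting derived pushforwards along the projection $p_{XZ}$ with completion; I would lean on flat base change for derived direct images, valid here because the supports are proper over $U$ so the pushforwards commute with the (flat) completion.
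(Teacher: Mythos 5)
Your proposal is correct and follows essentially the same route as the paper: part (1) via the unit-isomorphism characterization of full faithfulness together with the kernel-level exact triangle (the paper proves the ``standard criterion'' you cite by evaluating $\Phi^{\mcQ}$ on skyscraper sheaves, using $\Phi^{\mcQ}(\mcO_x)=\mcQ|_{\{x\}\times X}$), and part (2) via faithful flatness of completion, where your support argument and the base-change compatibility of $\mcP_\dR * \mcP$ simply flesh out what the paper leaves to a single line. One remark on emphasis: what makes the completion argument work is that $\mcQ$ is supported on $X\times_U X$, so that its support lies over the diagonal of $U$ and any closed point of it is seen by completing at a single $p\in U$; properness of $X\times_U X$ over $U$ only enters to ensure the image of the support in $U$ is closed, and could be bypassed by taking a closed point of the support directly.
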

\begin{proof}
(1) First observe that the functor $\Phi$ is fully faithful 
if and only if the adjoint map (\ref{eq:adjmap}) is an isomorphism. 
Let us take an object $E \in D^b(X)$. 
By applying $\Phi^{(-)}(E)$ to the exact triangle 
\[
\mcO_{\Delta_X} \to \mcP_\dR * \mcP \to \mcQ, 
\]
in $D^b(X \times X)$, 
we obtain an exact triangle 
\begin{equation} \label{eq:functortriangle}
E \to \Phi^\dR \circ \Phi(E) \to \Phi^\mcQ(E)
\end{equation}
in $D^b(X)$. 
Hence if $\mcQ=0$, we have an isomorphism 
$E \xrightarrow{\cong} \Phi^\dR \circ \Phi(E)$ for all $E \in D^b(X)$, 
i.e., $\Phi$ is fully faithful. 

For the converse, 
assume that $\mcQ \neq 0$. 
Then there exists a point $x \in X$ such that 
$\mcQ_x:=\mcQ|_{\{x\} \times X} \neq 0$. 
In particular, we have 
$\Phi^\mcQ(\mcO_x)=\mcQ_x \neq 0$ 
and hence 
$\mcO_x \neq \Phi^\dR \circ \Phi(\mcO_x)$ 
by the exact triangle (\ref{eq:functortriangle}), 
i.e., the functor $\Phi$ is not fully faithful. 

(2) The second assertion follows from the fact that 
the completion $\hat{U}_p \to U$ is faithfully flat. 
\end{proof}

\subsection{Grassmannian flip} \label{subsec:gflip}
Here we recall about 
geometry and the derived categories of {\it Grassmannian flips}, 
which play important roles in this paper. 
We refer \cite[Chapter II]{acgh85} for the details 
(see also \cite[Section 1]{blvdb16}). 
Let $W^\pm$ be vector spaces. 
Take a positive integer $i \leq \min\{\dim W^\pm \}$. 

Then the {\it determinantal variety} is defined to be 
\[
Z=Z_i:= \left\{
a \in \Hom\left((W^-)^\vee, W^+ \right): 
\rk(a) \leq i 
\right\}, 
\]
and its {\it Springer resolution} is defined as 
\begin{align*}
Y^+=Y^+_i
&:=\left\{
(a, V) \in \Hom\left((W^-)^\vee, W^+ \right) \times \Gr(i, W^+): 
\Image(a) \subset V
\right\}. 
\end{align*}
We have the following projections: 
\[
\xymatrix{
&Y^+ \ar[r]^-{h^+} \ar[d]_{\phi^+}& \Gr(i, W^+) \\
&Z. 
}
\]

The fiber of 
$h^+ \colon Y^+ \to \Gr(i, W^+)$ 
over a point 
$V \in \Gr(i, W^+)$ is 
\[
\Hom\left((W^-)^\vee, V \right) 
\cong V \otimes W^-. 
\]
Hence we have an isomorphism 
\[
Y^+ \cong \Tot_{\Gr(i, W^+)}\left( 
S^+ \otimes W^-
\right), 
\]
where $S^+$ denotes the universal subbundle 
on $\Gr(i, W^+)$. 
This shows that the variety $Y^+$ 
is smooth of dimension 
$i(\dim W^+ + \dim W^- -i)$. 
Moreover, the other projection 
$\phi^+ \colon Y^+ \to Z$ is isomorphism 
over the open locus 
\[
Z^o:=\{a \in Z: \rk(a)=i \}, 
\]
therefore $Y^+ \to Z$ 
is actually a resolution of singularities. 

On the other hand, 
we have canonical isomorphisms 
\[
\Hom\left((W^+)^\vee, W^- \right) 
\cong W^- \otimes W^+ 
\cong \Hom\left((W^-)^\vee, W^+ \right), 
\]
hence by replacing the roles of $W^\pm$ 
in the above construction, 
we get a second resolution of singularities of the variety $Z$: 
\[
\xymatrix{
&Y^- \cong \Tot_{\Gr(i, W^+)}\left( S^+ \otimes W^-\right) \ar[r]^-{h^-} \ar[d]_{\phi^-}& \Gr(i, W^-) \\
&Z, 
}
\]
where $S^-$ denotes the universal subbundle of $\Gr(i, W^-)$. 

Hence we obtain the diagram 
\[
\xymatrix{
&Y^- \ar[rd] & &Y^+ \ar[ld] \\
& &Z, &
}
\]
which we call the {\it Grassmannian flip}. 
The following lemma justifies this notion: 

\begin{lem} \label{lem:KofY}
Assume that $\dim W^+ > \dim W^-.$ 
Then the following statements hold: 
\begin{enumerate}
\item The canonical bundles of $Y^\pm$ are given as 
\[
\omega_{Y^\pm}
=h^{\pm *}\left(\det(S^\pm)^{\otimes \pm (\dim W^+ - \dim W^-)} \right). 
\]
In particular, the morphism $\phi^+$ (resp. $\phi^-$) 
is a $K$-negative (resp. $K$-positive) contraction. 

\item When $i=\dim W^-$, the morphism $\phi^+$ is a divisorial contraction 
and the morphism $\phi^-$ is an isomorphism. 

\item When $i < \dim W^-$, 
the birational map $Y^+ \dashrightarrow Y^-$ is a flip. 
\end{enumerate}
\end{lem}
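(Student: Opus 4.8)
The plan is to read everything off the explicit total-space presentations $Y^\pm \cong \Tot_{\Gr(i,W^\pm)}(S^\pm \otimes W^\mp)$, so write $n^\pm := \dim W^\pm$ and $\pi := h^+$. For (1) I would first compute $\omega_{Y^+}$ via the relative cotangent sequence for the total space of a vector bundle: for $\pi\colon\Tot_B(E)\to B$ the vertical tangent bundle is $T_{Y^+/B}\cong\pi^*E$, so $\omega_{Y^+}\cong\pi^*(\omega_B\otimes\det(E)^{-1})$. Taking $B=\Gr(i,W^+)$ and $E=S^+\otimes W^-$ reduces the task to two determinant computations. From the canonical identification $T_{\Gr(i,W^+)}\cong(S^+)^\vee\otimes Q^+$ together with $\det Q^+\cong(\det S^+)^{-1}$ one gets $\omega_{\Gr(i,W^+)}\cong(\det S^+)^{\otimes n^+}$; and since $\rk S^+=i$ and $\dim W^-=n^-$ one has $\det(S^+\otimes W^-)\cong(\det S^+)^{\otimes n^-}$. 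Combining yields $\omega_{Y^+}\cong h^{+*}(\det S^+)^{\otimes(n^+-n^-)}$, and the computation for $Y^-$ is identical after exchanging the roles of $W^\pm$, which flips the sign.

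For the contraction statements in (1) I would argue $\phi^\pm_*\mcO_{Y^\pm}\cong\mcO_Z$ from normality of the determinantal variety $Z$ and Zariski's main theorem, $\phi^\pm$ being proper and birational. To get ampleness, the cleanest route is to use that $(h^+,\phi^+)$ embeds $Y^+$ as a closed subvariety of $\Gr(i,W^+)\times Z$, under which $h^{+*}\mcO(1)$ is the restriction of the pullback of the Pl\"ucker bundle from the first factor; since that pullback is relatively ample over $Z$, so is its restriction $h^{+*}\mcO(1)$. Because $\det S^+$ is anti-ample (its dual is the Pl\"ucker polarisation) and $n^+>n^-$, the class $\omega_{Y^+}=h^{+*}(\det S^+)^{\otimes(n^+-n^-)}$ is $\phi^+$-anti-ample, so $\phi^+$ is $K$-negative; the same computation with reversed sign shows $K_{Y^-}$ is $\phi^-$-ample, i.e.\ $\phi^-$ is $K$-positive.

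For (2) and (3) the strategy is a Brill--Noether-type dimension count of the strata of $\phi^\pm$, using that the fibre of $\phi^+$ over a rank-$r$ matrix $a$ is $\Gr(i-r,W^+/\Image a)$. When $i=n^-$ the Grassmannian $\Gr(i,W^-)$ is a point and every $a$ automatically has rank $\le n^-$, so $Z$ is the whole $\Hom$-space and $\phi^-$ is an isomorphism; for $\phi^+$ the count at $r=n^--1$ shows $\phi^{+,-1}(\{\rk=n^--1\})$ has codimension one in $Y^+$, so $\phi^+$ is divisorial. For (3), the analogous count gives $\codim_{Y^+}\phi^{+,-1}(\{\rk=i-1\})=n^--i+1\ge2$ and, symmetrically, $\codim_{Y^-}(\Exc\phi^-)=n^+-i+1\ge2$ whenever $i<n^-\le n^+$ (the top stratum $r=i-1$ being the dominant one since the relevant dimension is concave in $r$ with vertex beyond $i$). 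Combined with (1), $\phi^+$ is then a flipping contraction and $\phi^-$ its $K$-positive resolution over the common base $Z$, so $Y^+\dashrightarrow Y^-$ is a flip in the sense of Definition \ref{def:contr}.

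Setting aside the routine determinant bookkeeping, I expect the two points needing genuine care to be (a) promoting fibrewise ampleness of $h^{\pm*}\mcO(1)$ to honest \emph{relative} ampleness over $Z$, which is exactly why I would use the closed embedding into $\Gr\times Z$ rather than a fibrewise Nakai criterion, and (b) pinning down the exceptional-locus codimensions precisely, since the entire dichotomy between the divisorial case (2) and the flipping case (3) turns on whether $n^--i$ is zero or strictly positive.
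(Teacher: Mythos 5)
Your proof is correct and takes essentially the same route as the paper: part (1) is the same determinant computation via the sequence $0 \to h^{\pm*}(S^\pm\otimes W^\mp) \to \mcT_{Y^\pm} \to h^{\pm*}\mcT_{\Gr(i,W^\pm)} \to 0$ together with $\det S^\pm\otimes\det Q^\pm\cong\mcO$, and parts (2)--(3) rest on the same stratified dimension count yielding $\codim \Exc(\phi^\pm)=\dim W^\mp-i+1$, with the dominant stratum being the rank $i-1$ locus. If anything, you fill in details the paper leaves implicit, namely $\phi^\pm_*\mcO_{Y^\pm}\cong\mcO_Z$ (normality of $Z$ plus Zariski's main theorem) and the promotion of the Pl\"ucker bundle's fibrewise ampleness to relative ampleness via the closed embedding $Y^\pm\hookrightarrow \Gr(i,W^\pm)\times Z$.
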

\begin{proof}
(1) We have the following exact sequence 
\[
0 \to h^{\pm *}\left(S^\pm \otimes W^\mp \right) 
\to \mcT_{Y^\pm} \to h^{\pm *} \mcT_{\Gr(i, W^\pm)} \to 0, 
\]
and the tangent bundle of the Grassmannian variety is given by 
$\mcT_{\Gr(i, W^\pm)} \cong S^{\pm *} \otimes Q^\pm$, 
where $Q^\pm$ denotes the universal quotient bundle 
of $\Gr(i, W^\pm)$. 
Hence we obtain 
\begin{equation} \label{eq:KofY}
\begin{aligned}
\omega_{Y^\pm} 
&\cong 
h^{\pm *}\left( \det\left(S^\pm \right)^{\otimes - \dim W^\mp} 
\otimes \left(\det(S^\pm)^{\otimes (\dim W^\pm -i)} \otimes \det(Q^{\pm *})^{\otimes i} \right) 
\right) \\
&\cong h^{\pm *} \left(
\det(S^\pm)^{\otimes (\dim W^\pm - \dim W^\mp)} 
\otimes \left(
\det(S^\pm) \otimes \det(Q^\pm)
\right)^{\otimes -i}
\right). 
\end{aligned}
\end{equation}

Moreover, the tautological sequence 
\[
0 \to S^\pm \to W^\pm \otimes \mcO_{\Gr(i, W^\pm)} \to Q^\pm \to 0
\]
shows that 
\[
\det(S^\pm) \otimes \det(Q^\pm) \cong \mcO_{\Gr(i, W^\pm)}. 
\]
Now the equation (\ref{eq:KofY}) becomes 
\[
\omega_{Y^\pm} \cong 
h^{\pm *} \left(
\det(S^\pm)^{\otimes (\dim W^\pm - \dim W^\mp)} \right)
\]
as required. 

To prove (2) and (3), we need to determine 
the dimensions of the $\phi^\pm$-exceptional loci. 
First note that we have a sequence of closed immersions
\[
\{0\}=Z_0 \subset Z_1 \subset \cdots Z_{i-1} \subset Z_i=Z, 
\]
and we have 
$\dim (\phi^\pm)^{-1}(Z_{j}) \leq 
\dim (\phi^\pm)^{-1}(Z_{i-1})$ 
for all $j \leq i-1$. 

Let us take a point 
$a \in Z_{i-1} \setminus Z_{i-2}$. 
By definition, the subspace 
$V_a:=\Image(a) \subset W^\pm$ has dimension $i-1$. 
The fibers $Y^\pm_a:=(\phi^\pm)^{-1}(a)$ are
\[
Y^\pm_a \cong 
\left\{
V \in \Gr(i, W^\pm): V_a \subset V
\right\} 
\cong \Gr(1, W^\pm/V_a). 
\]
Hence we have 
\begin{align*}
\dim \Exc(\phi^\pm)
&=\dim Z_{i-1}+\dim \Gr(1, W^\pm/V_a) \\
&=\dim Y^\pm_{i-1}+\dim \Gr(1, W^\pm/V_a) \\
&=(i-1)(\dim W^\pm+\dim W^\mp-i+1)
	+(\dim W^\pm-(i-1)-1) \\
&=i(\dim W^\pm + \dim W^\mp-i)+i-\dim W^\mp-1 \\
&=\dim Y^\pm -(\dim W^\mp+1-i), 
\end{align*}
and the assertions (2) and (3) hold. 
\end{proof}

For the derived categories of $Y^\pm$, 
we have the following result: 
\begin{thm}[{\cite[Theorem D]{blvdb16}}] \label{thm:blvdb}
Assume that $\dim W^+ > \dim W^-$. 
Then the FM functor 
\[
\Phi^{\mcO_{W}} \colon 
D^b(Y^-) \hookrightarrow D^b(Y^+)
\]
is fully faithful, 
where we put $W:= Y^- \times_Z Y^+$. 
\end{thm}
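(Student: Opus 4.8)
The plan is to establish full faithfulness through the vanishing criterion of Lemma~\ref{lem:formal}. Put $\mcP := \mcO_W$ and let $\mcQ \in D^b(Y^- \times Y^-)$ be the cone of the adjunction morphism $\mcO_{\Delta_{Y^-}} \to \mcP_\dR * \mcP$; by part~(1) it suffices to prove $\mcQ = 0$. Since $\phi^\pm \colon Y^\pm \to Z$ are projective and $\mcO_W$ is supported on $Y^- \times_Z Y^+$, the set-up of Lemma~\ref{lem:formal} applies with $U = Z$, and part~(2) reduces the vanishing of $\mcQ$ to the completion of $Z$ at each of its points. Over the smooth locus $Z^o = \{a : \rk(a) = i\}$ both $\phi^\pm$ are isomorphisms, so there $\mcO_W$ is the structure sheaf of the graph of the induced isomorphism and $\Phi^{\mcO_W}$ is an equivalence; hence $\mcQ$ is supported over $Z \setminus Z^o$. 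Using the $\mathbb{G}_m$-action rescaling the homomorphisms $a$, together with the stratification $Z_0 \subset \cdots \subset Z_i = Z$ from the proof of Lemma~\ref{lem:KofY}, I would further reduce the computation to the formal neighborhood of the cone point of each stratum.

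Write $\Phi := \Phi^{\mcO_W}$ and unwind it as $\Phi = \dR q^+_* \circ \dL q^{-*}$, where $q^\pm \colon W \to Y^\pm$ are the two projections. Because $Y^-$ is smooth and proper over the affine base $Z$, the skyscraper sheaves $\{\mcO_y\}_{y \in Y^-}$ form a spanning class, so a Bondal--Orlov type computation is available on the formal models. The first task is to compute $\Phi(\mcO_y) = \dR q^+_*(\dL q^{-*}\mcO_y)$: the fiber $(q^-)^{-1}(y)$ is exactly the $\phi^+$-fiber over $z := \phi^-(y)$, which by the analysis in Lemma~\ref{lem:KofY} is a Grassmannian $\Gr(1, W^+/V_a)$ (and a Grassmannian bundle over the deeper Brill--Noether strata). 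When $z \in Z^o$ this fiber is a reduced point and $\Phi(\mcO_y) = \mcO_{y^+}$ is again a skyscraper; the substance of the argument is concentrated over the singular strata.

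The heart of the proof is then the orthogonality relations
\[
\Ext^k_{Y^+}\bigl(\Phi(\mcO_y), \Phi(\mcO_{y'})\bigr) = 0 \quad \text{for } y \neq y' \text{ and all } k,
\]
together with $\Hom_{Y^+}(\Phi(\mcO_y), \Phi(\mcO_y)) = \mathbb{C}$ and $\Ext^k_{Y^+}(\Phi(\mcO_y), \Phi(\mcO_y)) = 0$ for $k < 0$ and $k > \dim Y^-$. Via base change and the projection formula along the Grassmannian fibers, these reduce to Borel--Weil--Bott and Bott-vanishing statements for homogeneous bundles on products of Grassmannians, intertwined with the higher Tor sheaves of the generically non-transverse fiber product $W$. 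Controlling those Tor contributions while carrying out the cohomology vanishing is the step I expect to be the main obstacle. The numerical input that makes the vanishing succeed --- and that produces a strict embedding rather than an equivalence --- is precisely the hypothesis $\dim W^+ > \dim W^-$, which by Lemma~\ref{lem:KofY}(1) renders $\phi^-$ $K$-positive and $\phi^+$ $K$-negative.

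As an alternative, more structural route I would realize $Y^\pm$ as the two VGIT quotients of $\mathbb{X} := \Hom(V, W^+) \oplus \Hom\bigl((W^-)^\vee, V\bigr)$, with $\dim V = i$, by $\GL(V)$ across a single wall, the invariant map $(b,c) \mapsto bc$ recovering $\phi^\pm$. Grade-restriction ``window'' technology then identifies $D^b(Y^\pm)$ with weight-restricted subcategories of the equivariant category $D^b_{\GL(V)}(\mathbb{X})$, and the inequality $\dim W^+ > \dim W^-$ forces the window for $Y^-$ to be contained in that for $Y^+$, yielding the fully faithful embedding. One would finally check that this window embedding coincides with the geometric Fourier--Mukai functor $\Phi^{\mcO_W}$ through the common roof $W$, which again comes down to the local model over the cone point.
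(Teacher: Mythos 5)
The first thing to note is that the paper does not prove this statement at all: it is imported verbatim as \cite[Theorem D]{blvdb16}, and the text around it contains no argument to compare yours against. So your proposal is an attempt to reprove a substantial external theorem, and judged on those terms it has genuine gaps. The decisive one is that the core of the theorem is precisely the step you defer. Over the locus $Z^o$ where $\phi^\pm$ are isomorphisms the statement is trivial; all of the content sits over the singular strata, where $\dL q^{-*}\mcO_y$ acquires higher Tor corrections because $W=Y^-\times_Z Y^+$ is a non-transverse fiber product, and the orthogonality and self-$\Ext$ computations for $\Phi(\mcO_y)$ are exactly the difficulty that the tilting/non-commutative-resolution machinery of \cite{blvdb16} is designed to handle. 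Writing that these ``reduce to Borel--Weil--Bott and Bott-vanishing statements \ldots intertwined with the higher Tor sheaves'' and flagging that step as ``the main obstacle'' leaves the proof without its mathematical core. Two further points need repair even at the structural level: (i) the pointwise Bondal--Orlov criterion you invoke is proved for smooth \emph{projective} varieties (\cite[Proposition 7.1]{huy06}), whereas $Y^\pm$ are only quasi-projective, so the criterion itself requires justification here (plausible, since your kernel and the skyscrapers have proper support, but not automatic); and (ii) the reduction via Lemma \ref{lem:formal} buys nothing, because $Z$ is a cone with a contracting $\mathbb{G}_m$-action, so the formal completion at the vertex retains the full complexity of the global problem.

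Your alternative VGIT route is closer in spirit to arguments that do exist in the literature (window containment for the $\GL(V)$-action on $\Hom((W^-)^\vee,V)\oplus\Hom(V,W^+)$, cf.\ \cite{ds14} in the flop case), and it would plausibly produce \emph{some} fully faithful embedding $D^b(Y^-)\hookrightarrow D^b(Y^+)$ under the hypothesis $\dim W^+>\dim W^-$. But the theorem as stated --- and as it is used in the proof of Theorem \ref{derived}, where Lemma \ref{lem:formal} glues formal-local statements for this particular kernel --- is about the specific Fourier--Mukai functor $\Phi^{\mcO_W}$. Identifying a window embedding with the geometric functor $\Phi^{\mcO_W}$ is itself a nontrivial comparison (in the flop case this identification is essentially the content of \cite{ds14}), and you leave it as a final unexecuted ``check.'' So on both routes the proposal stops exactly where the real work begins; as written it does not constitute a proof of the statement.
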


\begin{rmk}
When $\dim W^-=\dim W^+$, 
the canonical bundles of $Y^\pm$ are trivial, 
and we call the birational map 
$Y^+ \dashrightarrow Y^-$ 
the Grassmannian {\it flop}. 
In this case, the FM functor $\Phi^{\mcO_W}$ is an equivalence, 
which is treated also in \cite{ds14}. 
\end{rmk}

\subsection{Bott type vanishing}
For later use, we prove some vanishing results on 
cohomology groups of certain vector bundles on the Grassmannian varieties. 
Let $W$ be a vector space of dimension $n$, 
and $i \in \bZ_{>0}$ be a positive integer with $i < n$. 
We consider the Grassmannian variety $G=\Gr(i, W)$. 
Denote by $S, Q$ the universal sub and quotient bundles, respectively. 

An element 
$\beta=(\beta_1, \cdots, \beta_s) \in \bZ^{\oplus s}$ 
is called a {\it weight} if it satisfies 
$\beta_1 \geq \beta_2 \geq \cdots \geq \beta_{s-1} \geq \beta_s$. 
For a given pair $\alpha=(\beta, \gamma)$ of 
weights $\beta \in \bZ^{\oplus i}$ and $\gamma \in \bZ^{\oplus n-i}$, 
we define the vector bundle $V(\alpha)$ as 
\[
V(\alpha):=K_\beta(S^\vee) \oplus K_\gamma(Q^\vee), 
\]
where $K_\beta(-), K_\gamma(-)$ 
denote the {\it Weyl functors} 
(cf. \cite[Section 2.1]{wey03}). 

\begin{ex}
\begin{enumerate}
\item For $\alpha=(k, 0, \cdots, 0) \in \bZ^{\oplus n}$ 
with $k \geq 0$, we have 
$V(\alpha)=\Sym^k(S^\vee)$. 

\item For $\alpha=(0, \cdots, 0, -1) \in \bZ^{\oplus n}$, 
we have 
$V(\alpha)=Q$. 
\end{enumerate}
\end{ex}

We will use the following version of Bott vanishing: 
\begin{thm} \label{thm:Bott}
Assume that an element 
$\alpha=(\beta_1, \cdots, \beta_i, \gamma_{i+1}, \cdots, \gamma_n) \in \bZ^{\oplus n}$ 
satisfies 
$\beta_1 \geq \cdots \beta_i \geq 
\gamma_{i+1} \geq \cdots \geq \gamma_n$. 
Then we have the vanishing 
\[
H^j\left(G, V(\alpha) \right)=0 
\]
for all $j > 0$. 
\end{thm}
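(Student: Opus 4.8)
The plan is to deduce this from the classical Bott vanishing theorem for irreducible homogeneous bundles on the Grassmannian. Recall that on $G=\Gr(i,W)$ with $\dim W = n$, every irreducible $\GL(W)$-homogeneous vector bundle is of the form $K_\beta(S^\vee)\otimes K_\gamma(Q^\vee)$ for a pair of weights, and its cohomology is governed by a single combinatorial rule: one concatenates the two weights into a length-$n$ sequence, adds the staircase $\rho=(n-1,n-2,\dots,1,0)$, and asks whether the resulting vector has a repeated entry. If it does, all cohomology vanishes; if not, exactly one cohomology group is nonzero, sitting in the degree equal to the number of inversions needed to sort the shifted vector into strictly decreasing order. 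So the first step is to record this precise statement of Bott's theorem in the normalization used in \cite{wey03}, being careful about the duals (the bundle here is built from $S^\vee$ and $Q^\vee$, so the relevant weight is the honest concatenation $(\beta_1,\dots,\beta_i,\gamma_{i+1},\dots,\gamma_n)$ rather than its negative).

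The key observation is that the hypothesis $\beta_1\geq\cdots\geq\beta_i\geq\gamma_{i+1}\geq\cdots\geq\gamma_n$ says precisely that the full concatenated sequence $\alpha=(\beta_1,\dots,\beta_i,\gamma_{i+1},\dots,\gamma_n)$ is itself already weakly decreasing, i.e. $\alpha$ is a dominant weight for $\GL(W)$. Under this dominance, the shifted sequence $\alpha+\rho$ is \emph{strictly} decreasing (since we add a strictly decreasing staircase to a weakly decreasing sequence), hence it has no repeated entries and requires zero inversions to sort. By Bott's theorem this places all nonzero cohomology in degree $0$, giving $H^j(G,V(\alpha))=0$ for every $j>0$. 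One subtlety is that $V(\alpha)$ in the paper is defined as a direct \emph{sum} $K_\beta(S^\vee)\oplus K_\gamma(Q^\vee)$ rather than a tensor product, so strictly speaking one should note that cohomology commutes with direct sums and treat the two summands; but the cleaner route is to recognize that the statement is really about the single irreducible bundle attached to the dominant weight $\alpha$, and the vanishing for $j>0$ follows for each irreducible constituent by the same dominance argument.

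The main obstacle I anticipate is purely bookkeeping: matching conventions so that the inequality in the hypothesis corresponds exactly to dominance (hence to the "no inversions, concentrated in degree $0$" case of Bott) rather than to some other chamber. In particular one must check that the Weyl functor normalization of \cite{wey03}, applied to $S^\vee$ and $Q^\vee$, produces the homogeneous bundle whose associated weight is the plain concatenation (and not, say, the reversed or negated sequence), because a sign error there would swap the roles of $H^0$ and top-degree cohomology. Once the dictionary is fixed, the argument is immediate; I would therefore spend most of the write-up stating Bott's theorem carefully and verifying that the given chain of inequalities is the dominance condition, and only a single line on the conclusion $\alpha+\rho$ strictly decreasing $\Rightarrow$ cohomology in degree $0$ only.
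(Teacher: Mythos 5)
Your proposal is correct and takes essentially the same route as the paper: the paper's entire proof is a one-line citation of \cite[Corollary 4.1.9 (2)]{wey03}, which is precisely the dominant-weight case of Bott's theorem that you spell out (the hypothesis says the concatenated weight $\alpha$ is weakly decreasing, so $\alpha+\rho$ is strictly decreasing, requires no exchanges, and all cohomology is concentrated in degree $0$). Your side remark is also right that the $\oplus$ in the definition of $V(\alpha)$ must be read as $\otimes$ — the paper's own example $V(0,\cdots,0,-1)=Q$ and the decomposition used in Lemma \ref{lem:vanGrS} only make sense with the tensor-product convention.
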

\begin{proof}
This is a special case of \cite[Corollary 4.1.9 (2)]{wey03}. 
\end{proof}

As an application of the above Bott type vanishing, 
we have the following: 
\begin{lem} \label{lem:vanGrS}
We have the following vanishing of cohomology groups: 
\begin{enumerate}
\item $H^1\left(G, \mcT_G \otimes \Sym^k(S^\vee) \right)=0$ for all $k \geq 0$. 
\item $H^1\left(G, S \otimes \Sym^k(S^\vee) \right)=0$ for all $k >0$. 
\item $H^1\left(G, \Sym^k(S^\vee) \right)=0$ for all $k >0$. 
\end{enumerate}
\end{lem}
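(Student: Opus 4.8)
The plan is to reduce all three vanishing statements to the Bott-type vanishing of Theorem \ref{thm:Bott}. The key point is that each bundle appearing in the lemma can be decomposed, via standard representation-theoretic rules, into a direct sum of Weyl bundles $V(\alpha)$ whose weights $\alpha$ satisfy the descending condition $\beta_1 \geq \cdots \geq \beta_i \geq \gamma_{i+1} \geq \cdots \geq \gamma_n$ required by the theorem. Once such a decomposition is exhibited, the cohomology vanishing $H^1(G, V(\alpha))=0$ follows immediately for each summand, and hence for the whole bundle.

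I would handle the three parts in increasing order of difficulty. Part (3) is essentially immediate: taking $\alpha=(k,0,\dots,0)$ we have $V(\alpha)=\Sym^k(S^\vee)$ by the first example, and for $k>0$ this weight is already in descending form, so Theorem \ref{thm:Bott} applies directly. For part (2), I would first rewrite $S \otimes \Sym^k(S^\vee)$; since $S$ is the dual of $S^\vee$, this is most naturally analyzed by writing $S \cong S^\vee \otimes \det(S)^{-1} \otimes (\text{something})$ on the rank-$i$ bundle, or more cleanly by applying the Pieri/Littlewood-Richardson rule to decompose the tensor product $S^\vee \otimes \Sym^k(S^\vee)$ into Schur (Weyl) functors $K_\beta(S^\vee)$ and then twisting. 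Each resulting $\beta$ will have all parts nonnegative, hence the combined weight $\alpha=(\beta,0,\dots,0)$ stays descending, and the vanishing follows summand by summand.

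Part (1) is the main obstacle, since it involves the tangent bundle $\mcT_G \cong S^\vee \otimes Q$ (dualizing $\mcT_G \cong S^{*} \otimes Q$ from the proof of Lemma \ref{lem:KofY}). The bundle in question is therefore $S^\vee \otimes Q \otimes \Sym^k(S^\vee)$, which mixes both the sub- and quotient-bundle factors, so the decomposition must be carried out in two stages: first decompose $S^\vee \otimes \Sym^k(S^\vee)$ into a sum of $K_\beta(S^\vee)$ via the Pieri rule, and then tensor each summand with $Q = K_{(0,\dots,0,-1)}(Q^\vee)$. The delicate step is checking that, after this second tensoring, the resulting pairs $(\beta,\gamma)$ still satisfy the interleaving inequality $\beta_i \geq \gamma_{i+1}$ at the junction between the $S^\vee$-weights and the $Q^\vee$-weights. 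Here the nonnegativity of the $\beta$'s (coming from $\Sym^k$ and the Pieri rule) together with the fact that $Q$ contributes only a single $-1$ in the last slot should guarantee the inequality holds, but this boundary condition is exactly where one must be careful; once it is verified, Theorem \ref{thm:Bott} finishes the argument.
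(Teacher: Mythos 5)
Your parts (1) and (3) are correct and coincide with the paper's own argument: the paper proves (1) exactly as you describe, using the Pieri rule to write
\[
S^\vee \otimes \Sym^k(S^\vee) \cong K_{(k,1,0,\dots,0)}(S^\vee) \oplus K_{(k+1,0,\dots,0)}(S^\vee),
\]
and then observing that, since these weights are nonnegative, concatenating with the $Q$-weight $(0,\dots,0,-1)$ still gives a descending sequence, so Theorem \ref{thm:Bott} applies summand by summand. Your "delicate" junction check $\beta_i \geq \gamma_{i+1}$ indeed goes through there because $\beta_i \geq 0 = \gamma_{i+1}$.

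The gap is in part (2). You in effect replace $S \otimes \Sym^k(S^\vee)$ by $S^\vee \otimes \Sym^k(S^\vee)$ ("decompose $S^\vee \otimes \Sym^k(S^\vee)$ \dots and then twist") and claim that all resulting weights have nonnegative parts. But $S$ is not a line-bundle twist of $S^\vee$ once $i \geq 3$ (the correct identity is $S \cong \wedge^{i-1}(S^\vee) \otimes \det S$), and the actual decomposition is
\[
S \otimes \Sym^k(S^\vee) \cong \Sym^{k-1}(S^\vee) \oplus K_{(k,0,\dots,0,-1)}(S^\vee) \qquad (k \geq 1,\ i \geq 2).
\]
The second summand has concatenated weight $\alpha = (k,0,\dots,0,-1,0,\dots,0)$ with the $-1$ in position $i$, which violates the interleaving hypothesis of Theorem \ref{thm:Bott}: $\beta_i = -1 < 0 = \gamma_{i+1}$. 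So the dominant-weight vanishing theorem, as stated in the paper, cannot be applied to this summand, and your assertion that the weights "stay descending" is false, not merely unverified. The conclusion of (2) is nevertheless true, but it requires the full Borel--Weil--Bott algorithm rather than the special case quoted as Theorem \ref{thm:Bott}: adding $\rho = (n-1,n-2,\dots,1,0)$ to $\alpha$ produces a repeated entry (positions $i$ and $i+1$ both equal $n-i-1$), hence $K_{(k,0,\dots,0,-1)}(S^\vee)$ is acyclic; this repetition case is contained in Weyman's Corollary 4.1.9 but not in the statement the paper extracts from it. (The paper itself only writes out part (1) and dismisses (2), (3) as "similar computations", so it leaves the same point implicit; still, a complete proof of (2) must invoke the repetition case or give some substitute argument, and your proposal as written would not produce one.)
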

\begin{proof}
We only prove the first assertion, 
since the other can be proved 
by similar computations. 
Recall that we have 
$\mcT_G \cong S^\vee \otimes Q$, 
and hence 
\[
\mcT_G \otimes \Sym^k(S^\vee) 
\cong S^\vee \otimes \Sym^k(S^\vee) \otimes Q. 
\]

Moreover, by Pieri formula (cf. \cite[Corollary 2.3.5]{wey03}), 
we have 
\[
S^\vee \otimes \Sym^k(S^\vee) 
\cong K_{\nu_1}(S^\vee) \oplus K_{\nu_2}(S^\vee), 
\]
where 
$\nu_1=(k, 1, 0, \cdots, 0), 
\nu_2=(k+1, 0, \cdots, 0) \in \bZ^{\oplus i}$ 
(see also \cite[Proposition 2.1.18]{wey03} for the relation between 
the Schur and the Weyl functors). 
Hence we have 
\[
\mcT_G \otimes \Sym^k(S^\vee) 
\cong V(\alpha_1) \oplus V(\alpha_2), 
\]
where we put 
$\alpha_t=(\nu_t, 0, \cdots, 0, -1) \in \bZ^{\oplus n}$ 
for $t=0, 1$. 
Hence by Theorem \ref{thm:Bott}, 
we obtain the vanishing 
$H^1\left(G, \mcT_G \otimes \Sym^k(S^\vee) \right)=0$ 
as required. 
\end{proof}

\section{Framed sheaves on the blow-up} \label{sec:frame}
\subsection{ADHM description and wall crossing}
Let us consider the projective plane 
$\bP^2=\bC^2 \cup l_{\infty}$ 
and its blow-up $f \colon \hatP^2 \to \bP^2$ 
at the origin $0 \in \bC^2$. 
Denote by $C \subset \bP^2$ 
the $f$-exceptional curve. 
In this subsection, 
we recall the notion of $m$-stable sheaves on $\hatP^2$ 
with framing at $l_{\infty}$ 
studied by Nakajima-Yoshioka \cite{ny11a}. 

\begin{defin}\label{defin:m-st}
Fix a non-negative integer $m \in \mathbb{Z}_{\geq 0}$. 
Let $(E, \Phi)$ be a framed sheaf on $\hatP^2$, i.e., 
let $E$ be a coherent sheaf on $\hatP^2$ and 
$\Phi \colon E|_{l_{\infty}} \xrightarrow{\cong} 
\mathcal{O}_{l_{\infty}}^{\oplus \ch_0(E)}$ 
a framing at $l_{\infty}$. 
We say that $(E, \Phi)$ is 
{\it $m$-stable} if the following conditions hold: 
\begin{enumerate}
\item $\Hom(E(-mC), \mathcal{O}_{C}(-1))=0$, 
\item $\Hom(\mathcal{O}_{C}, E(-mC))=0$, 
\item $E$ is torsion free outside $C$.  
\end{enumerate}
\end{defin}

\begin{rmk}
Let $E$ be an $m$-stable framed sheaf. 
By \cite[Proposition 1.9 (1)]{ny11b}, 
the condition (1) in the above definition implies that 
the sheaf $E(-mC)$ is an object of 
the category $\Per(\hatP^2/\bP^2)$ 
of {\it perverse coherent sheaves} 
introduced by Bridgeland \cite{bri02}. 
Hence we may also think of $m$-stable sheaves 
as stable objects in the category 
$\Per(\hatP^2/\bP^2) \otimes \mcO(mC)$. 
\end{rmk}

We denote by $M^m(v)$ the fine moduli space 
of $m$-stable framed sheaves on $\hatP^2$ 
with Chern character $v$. 
Let us recall the ADHM description of 
framed sheaves on $\hatP^2$; 
Consider the following quiver $Q$ 
\[
\xymatrix{
&0 \ar@<-0.8ex>[rr]_-d & 
&1 \ar@<-1.0ex>[ll]_{B_{1}, B_{2}} \ar@<-0.1ex>[ll] \ar@<+0.2ex>[ld]^-j \\
& &\infty \ar@<+0.2ex>[lu]^-i & 
}
\]
with the relation 
\[
I \colon B_{1}dB_{2}-B_{2}dB_{1}+ij=0. 
\]

For a given Chern character 
\[
v=(r, -kC, \ch_{2}) \in \bigoplus_{i=0}^2 H^{2i}(\hatP^2, \mathbb{Q}), 
\]
we associate the dimension vector 
$\vec{d}=(d_{0}, d_{1}, d_{\infty})$ 
by the following formula: 
\begin{equation} \label{eq:dim}
d_{\infty}=r, \quad k=d_{0}-d_{1}, 
\quad \ch_2=-\frac{1}{2}(d_{0}+d_{1}). 
\end{equation}

\begin{thm}[\cite{ny11a}] \label{thm:adhm}
Let us fix a Chern character 
$v=(r, -kC, \ch_{2}) \in H^{2*}(\hatP^2, \mathbb{Q})$
with $r>0, k\geq 0$, 
and associate the vector 
$\vec{d} \in \mathbb{Z}_{\geq 0}^{\oplus 3}$ 
as in (\ref{eq:dim}). 
Then, in the region 
$\Omega:=\left\{
(\zeta_{0}, \zeta_{1}) \in \mathbb{R}^2 \colon 
\zeta_{0} > 0
\right\}$ 
of King's stability parameters for $(Q, I)$-representations, 
the walls are classified as 
\[
\mathcal{W}_{m}:=\left\{
(\zeta_{0}, \zeta_{1}) \in \Omega \colon 
m\zeta_{0}+(m+1)\zeta_{1}=0
\right\}, 
\quad m \in \mathbb{Z}_{\geq 0}. 
\]

For $m \geq 1$, denote by $\mathcal{C}_{m}$ the chamber 
between the walls $\mathcal{W}_{m}$ and $\mathcal{W}_{m-1}$, 
and put 
$\mathcal{C}_{0}:=
\left\{
(\zeta_{0}, \zeta_{1}) \in \mathbb{R}^2 : 
\zeta_{0}, \zeta_{1} <0
\right\}. 
$
The following statements hold: 
\begin{enumerate}
\item For each integer $m \in \mathbb{Z}_{\geq 0}$, 
and a stability condition $\zeta \in \mathcal{C}_{m}$, 
there exists an isomorphism 
$M^m(v) \cong M^\zeta(Q, I; \vec{d})$. 
Here, $M^\zeta(Q, I; \vec{d})$ denotes 
the moduli space of $\zeta$-stable (Q, I)-representations 
with dimension vector $\vec{d}$. 
Furthermore, these moduli spaces are 
either empty or 
smooth quasi-projective varieties 
of dimension 
$d_{\infty}(d_{0}+d_{1})-(d_{0}-d_{1})^2$. 
\item We have a natural morphism 
\[
M^0(v) \to M_{\bP^2}(r, 0, \ch_2), 
\quad E \mapsto f_{*}E, 
\]
which is an isomorphism when $k=0$. 
Here, $M_{\bP^2}(r, 0, \ch_2)$ 
denotes the moduli space of 
torsion free framed sheaves on $\bP^2$. 

\item There exists an integer 
$m_{0} \in \mathbb{Z}_{\geq 0}$ such that 
for every integer $m \geq m_{0}$, 
we have an isomorphism 
\[
M^m(v) \cong M_{\hatP^2}(v), 
\] 
where $M_{\hatP^2}(v)$ 
denotes the moduli space of 
torsion free framed sheaves on $\hatP^2$.
 
\item For each integer $m \in \mathbb{Z}_{\geq 0}$ 
and a stability condition $\zeta \in \mathcal{W}_{m}$, 
there exists a set-theoretic bijection 
\[
M^{\zeta}(Q, I; \vec{d})
=M^{m, m+1}(v):=\bigsqcup_{i \geq 0} 
               \left(M^m(v-ic_{m}) \cap M^{m+1}(v-ic_{m}) 
               \right), 
\]
where we put $c_{m}:=\ch(\mathcal{O}_{C}(-m-1))$. 
\end{enumerate}
\end{thm}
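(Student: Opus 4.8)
The plan is to reduce the whole statement to the combinatorics of King stability for the quiver $(Q,I)$ and then to identify the two extremal chambers with the geometric moduli problems on $\bP^2$ and $\hatP^2$. For part (1) I would first set up the equivalence underlying the ADHM description. After twisting by $\mcO(-mC)$, condition (1) of Definition \ref{defin:m-st} places $E(-mC)$ in the heart $\Per(\hatP^2/\bP^2)$, and Bridgeland's tilting description identifies the framed version of this heart with the category of representations of $(Q,I)$; the dimension vector $\vec{d}$ is recovered from $v$ by pairing the Chern character against the simple perverse objects, which yields precisely the formula (\ref{eq:dim}). The heart of part (1) is then to check that the two Hom-vanishing conditions defining $m$-stability are equivalent to King $\zeta$-stability for $\zeta \in \mathcal{C}_m$: conditions (1) and (2) forbid exactly the sub- and quotient objects supported on $C$ whose slope changes sign as $\zeta$ crosses the adjacent walls $\mathcal{W}_{m-1}$ and $\mathcal{W}_m$.

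For the wall classification I would determine which sub-dimension vectors $\vec{e}\le\vec{d}$ can satisfy the slope-equality $\sum_i \zeta_i e_i = 0$ on the region $\Omega$. The framing node forces $e_\infty\in\{0,r\}$, and for the destabilizing representations coming from the curve---those arising from $\mcO_C(-j)$---one computes that the resulting linear equation is $m\zeta_0+(m+1)\zeta_1=0$, producing exactly the walls $\mathcal{W}_m$. Smoothness and the dimension count then follow from the standard theory of quiver varieties: $M^\zeta(Q,I;\vec{d})$ is the GIT quotient of the smooth variety cut out by the relation $I$, the gauge group acts freely on the $\zeta$-stable locus, and an Euler-form computation gives the dimension $d_\infty(d_0+d_1)-(d_0-d_1)^2$.

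Parts (2) and (3) are the two boundary chambers $\mathcal{C}_0$ and $\mathcal{C}_m$ for $m\gg0$. For part (3) I would show that once $m$ exceeds a bound depending only on $v$, the two Hom-vanishing conditions become automatic for any torsion free $E$, so the functor of $m$-stable sheaves agrees with that of torsion free framed sheaves on $\hatP^2$ and hence $M^m(v)\cong M_{\hatP^2}(v)$. For part (2), pushing forward along $f$ sends a $0$-stable sheaf to a framed sheaf on $\bP^2$; the conditions at $C$ ensure $\dR^1 f_* E=0$ and that $f_*E$ is torsion free, giving the morphism, and when $k=0$ there is no charge concentrated on $C$, so the adjunction unit $f^* f_* E\to E$ is an isomorphism and the map is an isomorphism.

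Finally, part (4) concerns the wall itself. For $\zeta\in\mathcal{W}_m$ I would run the Jordan--H\"older analysis: a strictly $\zeta$-semistable representation decomposes, up to S-equivalence, into a $\zeta$-stable core together with $i$ copies of the stable object of class $c_m=\ch(\mcO_C(-m-1))$, which accounts both for the index $i$ and for the Chern character shift $v-ic_m$ in the disjoint union. Since an object is semistable on the wall precisely when it is stable in both adjacent chambers, each stratum is identified with the intersection $M^m(v-ic_m)\cap M^{m+1}(v-ic_m)$, yielding the claimed set-theoretic bijection. I expect the main obstacle to be the precise equivalence in part (1): matching the sheaf-theoretic vanishing conditions to King stability requires controlling every potential destabilizing subobject in the perverse heart, not only the obvious ones supported on $C$, and ruling out spurious walls inside each chamber $\mathcal{C}_m$.
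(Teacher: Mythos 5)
Your proposal takes a genuinely different route from the paper, but it has real gaps. The paper does not reprove statements (1)--(3) at all: it imports them by citation (Theorems 1.5 and 2.5, Propositions 7.4 and 7.1, and Section 4.3 of \cite{ny11a}), and the only argument it actually supplies is the short S-equivalence deduction for (4), which your Jordan--H\"older paragraph reproduces essentially correctly. You instead attempt to re-derive the Nakajima--Yoshioka results from scratch, and the missing steps sit exactly where the content of their papers lies. Concretely: (i) the equivalence between framed sheaves on $\hatP^2$ (with the perverse $t$-structure) and $(Q,I)$-representations, together with the identification of $m$-stability with King $\zeta$-stability for $\zeta \in \mathcal{C}_m$, is the main theorem of \cite{ny11a}; invoking ``Bridgeland's tilting description'' does not produce the specific quiver $Q$, the relation $I$, the dictionary (\ref{eq:dim}), or the chamber correspondence. (ii) Your wall classification only exhibits the walls $\mathcal{W}_m$ generated by $\mcO_C(-m-1)$; the substantive claim is that there are \emph{no other} walls in $\Omega$, and you explicitly defer this (``ruling out spurious walls'') rather than resolve it. (iii) Smoothness in (1) is not automatic from GIT: the locus cut out by the relation $I$ is not smooth in general, and smoothness along the stable locus requires an obstruction-vanishing argument.

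There are also two directional gaps. In (3) you argue (modulo boundedness, which is needed to make $m_0$ depend only on $v$) that torsion free framed sheaves are $m$-stable for $m \gg 0$, but not the converse: $m$-stable sheaves of positive rank can genuinely have torsion along $C$ --- the extensions (\ref{eq:destabE-}) used throughout this very paper produce such sheaves for every $m$ --- and showing that for $m \geq m_0$ no such sheaf has class $v$ is the content of \cite[Proposition 7.1]{ny11a}. In (4), your reduction of an S-equivalence class to ``a stable core plus $i$ copies of the object of class $c_m$'' presupposes that the unique $\zeta$-stable object with $\zeta$-degree zero on $\mathcal{W}_m$ is the one corresponding to $\mcO_C(-m-1)$; this is \cite[Proposition 5.3]{ny11a}, precisely the input the paper cites before running the same argument you give. (Also, ``semistable on the wall precisely when stable in both adjacent chambers'' is false as stated for strictly semistable objects; it is the stable core $E'$, not the semistable object itself, that lies in $M^m(v-ic_m) \cap M^{m+1}(v-ic_m)$.) Finally, in (2) the claim that the counit $f^*f_*E \to E$ is an isomorphism when $k=0$ because ``there is no charge concentrated on $C$'' is hand-waving: $0$-stable sheaves are only assumed torsion free outside $C$, $f^*f_*E$ can acquire torsion along $C$, and the cited proof of \cite[Proposition 7.4]{ny11a} instead constructs an explicit inverse. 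In short, your skeleton faithfully mirrors how the cited results are actually proved, but every load-bearing step is asserted or acknowledged as an obstacle, so the proposal as it stands does not establish the theorem.
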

\begin{proof}
The classification of walls is explained in \cite[Section 4.3]{ny11a}; 
The statements (1), (2), (3) are 
proved in Theorems 1.5 and 2.5, 
Proposition 7.4, 
and Proposition 7.1 
in \cite{ny11a}, respectively. 

Let us now consider the statement (4). 
By \cite[Section 4.3, Proposition 5.3]{ny11a}, 
the wall $\mcW_m$ corresponds to 
the destabilizing object $\mcO_C(-m-1)$ 
in the following sense: 
for any stability condition $\zeta \in \mcW_m$ on the wall 
and any $\zeta$-semistable object $E$, its S-equivalence class 
is of the form 
$\mcO_C(-m-1)^{\oplus i} \oplus E'$ 
for some non-negative integer $i \geq 0$ 
and $m$-stable and $(m+1)$-stable sheaf $E'$ 
with $\ch(E')=v-ic_m$. 
Hence as a closed point in the moduli space, we have 
\[
[E]=[\mcO_C(-m-1)^{\oplus i} \oplus E'] \in M^\zeta(Q, I; \vec{d}), 
\]
i.e., the closed point $[E] \in M^\zeta(Q, I; \vec{d})$ 
is uniquely determined by the sheaf 
$E' \in M^m(v-ic_m) \cap M^{m+1}(v-ic_m)$. 
Hence we have the bijection as stated. 
\end{proof}

By the above theorem, we have the diagram 
as in (\ref{eq:nyintro})
connecting the moduli spaces 
$M_{\bP^2}(r, 0, \ch_{2})$ 
and $M_{\hatP^2}(r, -kC, \ch_{2})$.

\subsection{Brill-Noether loci} \label{subsec:bn}

Next we recall the Brill-Noether stratifications on the moduli spaces 
and the determination of the fibers over $\xi_{m}^{\pm}$. 
For each integer $i \in \mathbb{Z}_{\geq 0}$, 
let us consider the following locally closed subschemes: 
\begin{align*}
&M^m(v)_{i}:=\left\{
(E, \Phi) \in M^m(v) \colon 
\hom(\mathcal{O}_{C}(-m-1), E)=i
\right\}, \\
&M^{m+1}(v)_{i}:=\left\{
(E, \Phi) \in M^{m+1}(v) \colon 
\hom(E, \mathcal{O}_{C}(-m-1))=i
\right\}. 
\end{align*}
We call them as the {\it Brill-Noether strata}. 
We also denote as 
$M^{m, m+1}(v)_{i}:=M^m(v-ic_{m}) \cap M^{m+1}(v-ic_{m})$. 

Let us take an object $E \in M^m(v)$. 
By \cite[Proposition 3.15]{ny11b}, 
we have the exact sequence 
\begin{equation} \label{eq:destabE-}
0 \to \Hom(\mcO_C(-m-1), E) \otimes \mcO_C(-m-1) \xrightarrow{ev} 
E \to E' \to 0
\end{equation}
for some object $E' \in M^{m, m+1}(v)$. 
By (the proof of) Theorem \ref{thm:adhm} (4), we have 
$\xi^-_m(E)=E'$, 
where $\xi^-_m \colon M^m(v) \to M^{m, m+1}(v)$ 
is the morphism in the diagram (\ref{eq:nyintro}). 
Hence the morphism $\xi^-_m$ restricts to the morphism 
$\xi^-_{m, i} \colon M^m(v)_i \to M^{m, m+1}(v)_i$. 
Moreover, from the exact sequence (\ref{eq:destabE-}), 
we see that an $m$-stable sheaf $E$ is $(m+1)$-stable 
if and only if the vanishing 
$\Hom(\mcO_C(-m-1), E)=0$ holds. 
Hence we have 
\[
M^m(v)_0=M^m(v) \cap M^{m+1}(v) \subset M^m(v) 
\]
and it is an open immersion. 

Let us denote by 
\[
\mcE'_{i} \in \Coh\left(\hatP^2 \times M^{m, m+1}(v)_{i} \right) 
\]
the universal family, and let 
\[
p \colon \hatP^2 \times M^{m, m+1}(v)_{i} \to M^{m, m+1}(v)_{i}, \quad 
q \colon \hatP^2 \times M^{m, m+1}(v)_{i} \to \hatP^2 
\]
be the projections. 
The following theorem shows 
the structure of the morphism $\xi^\pm_m$ 
in terms of the Brill-Noether strata: 
\begin{thm}[{\cite[Proposition 3.31, Proposition 3.32]{ny11b}}] \label{thm:bn}
The morphisms 
\begin{align*}
&\xi_{m, i}^- \colon M^m(v)_{i} \to M^{m, m+1}(v)_{i}, \\
&\xi_{m, i}^+ \colon M^{m+1}(v)_{i} \to M^{m, m+1}(v)_{i}
\end{align*}
are identified with the morphisms 
\begin{align*}
&\Gr\left(i, \mcE xt^1_{p}(\mcE'_{i}, q^*\mcO_{C}(-m-1)) \right) 
\to M^{m, m+1}(v)_{i}, \\
&\Gr\left(i, \mcE xt^1_{p}(q^*\mcO_{C}(-m-1), \mcE'_{i}) \right) 
\to M^{m, m+1}(v)_{i}, 
\end{align*}
respectively. 
In particular, every fiber of the morphisms $\xi_{m}^{\pm}$ 
is the Grassmannian variety. 
\end{thm}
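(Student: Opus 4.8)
The plan is to identify the Brill--Noether strata with Grassmann bundles over $M^{m,m+1}(v)_i$ by showing that the data parametrized on each side agree, and that this agreement is functorial in families. Consider the $(-)$ case first. Starting from an $m$-stable sheaf $E \in M^m(v)_i$, the defining exact sequence (\ref{eq:destabE-}) exhibits $E$ as an extension of $E' \in M^{m,m+1}(v)_i$ by $\Hom(\mcO_C(-m-1),E) \otimes \mcO_C(-m-1)$, where the Hom-space has dimension $i$. The key observation is that the map $\xi^-_{m,i}$ sends $E$ to $E'$, so the fiber over a fixed $E'$ should parametrize all the ways of reconstructing such an $E$ from $E'$. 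Since $E$ is recovered as the kernel of a surjection $E \twoheadrightarrow E'$ with the prescribed cokernel, or dually via the connecting data, I expect the fiber to be governed by $\Ext^1(E', \mcO_C(-m-1))$-type information; taking into account that $E'$ ranges over the base, the relative version is the sheaf $\mcE xt^1_p(\mcE'_i, q^*\mcO_C(-m-1))$.

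First I would fix $E' \in M^{m,m+1}(v)_i$ and analyze, at the level of a single point, which $m$-stable sheaves $E$ map to it under $\xi^-_m$. Applying $\Hom(-, \mcO_C(-m-1))$ or $\Hom(\mcO_C(-m-1), -)$ to (\ref{eq:destabE-}) and using the $m$-stability and $(m+1)$-stability of $E'$ (which force the relevant boundary $\Hom$ and $\Ext$ groups to vanish), I would show that choosing such an $E$ is equivalent to choosing an $i$-dimensional subspace of a fixed $\Ext^1$-group. The rank of this $\Ext^1$-group must be computed; the stratum condition $\hom(\mcO_C(-m-1),E)=i$ together with the vanishing statements should pin it down so that the subspaces of dimension $i$ form a Grassmannian. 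This pointwise computation is essentially the content already present in \cite{ny11b}, and I would lean on the cohomological vanishing coming from the stability conditions in Definition \ref{defin:m-st}.

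Next I would promote this pointwise description to a description in families. Over $M^{m,m+1}(v)_i$ the universal family $\mcE'_i$ gives the relative Ext sheaf $\mcE xt^1_p(\mcE'_i, q^*\mcO_C(-m-1))$, and I would verify that this is locally free of the correct rank by checking that the relevant cohomology groups have constant dimension along the stratum (using base change, with the vanishing of neighboring $\mcE xt$ sheaves guaranteeing the needed flatness). The universal extension over the total space of the Grassmann bundle then produces a family of $m$-stable sheaves, hence a morphism from $\Gr(i, \mcE xt^1_p(\mcE'_i, q^*\mcO_C(-m-1)))$ to $M^m(v)_i$; by the moduli property of $M^m(v)$ and the pointwise bijection established above, this is an isomorphism over $M^{m,m+1}(v)_i$. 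The $(+)$ case is entirely parallel, with the roles of $E$ and $\mcO_C(-m-1)$ interchanged in the Hom/Ext groups, reflecting the dual destabilizing sequence for $(m+1)$-stable sheaves.

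The main obstacle I anticipate is the family-level argument: ensuring that $\mcE xt^1_p(\mcE'_i, q^*\mcO_C(-m-1))$ is a genuine vector bundle of constant rank over each stratum, and that the universal extension glues correctly to give a morphism in the opposite direction realizing the inverse. This requires controlling the adjacent relative Ext sheaves $\mcE xt^0_p$ and $\mcE xt^2_p$ so that cohomology-and-base-change applies and the rank is locally constant; the stability hypotheses are exactly what should force the vanishing of these neighbors. Once flatness and the rank are secured, identifying $\xi^-_{m,i}$ with the Grassmann bundle projection is formal, and the final assertion that every fiber of $\xi^\pm_m$ is a Grassmannian follows immediately since the fiber of a Grassmann bundle over a point is $\Gr(i, W)$ for the corresponding fiber vector space $W$.
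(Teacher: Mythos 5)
The first thing to note is that the paper contains no proof of this statement: Theorem \ref{thm:bn} is quoted directly from \cite[Propositions 3.31 and 3.32]{ny11b}, so the comparison is with Nakajima--Yoshioka's original argument rather than with anything in the text. Your outline does follow the same route as that original proof: classify the points of a fiber of $\xi^{\pm}_{m,i}$ by $i$-dimensional subspaces of the appropriate $\Ext^1$-group using the destabilizing sequence (\ref{eq:destabE-}), then globalize via the universal family, cohomology and base change, and the universal extension over the Grassmann bundle. The family-level part of your plan is sound: on the stratum, $\hom(E',\mcO_C(-m-1))$ and $\ext^2(E',\mcO_C(-m-1))$ vanish by the $m$- and $(m+1)$-stability of $E'$ (with Serre duality for the $\Ext^2$), so $\ext^1(E',\mcO_C(-m-1))=-\chi(E',\mcO_C(-m-1))$ is constant along the stratum --- this is exactly the observation used later in Lemma \ref{lem:dimext} --- and $\mcE xt^1_{p}(\mcE'_{i}, q^*\mcO_C(-m-1))$ is locally free of the expected rank.

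The gap is in the pointwise step, which you describe as ``essentially the content already present in \cite{ny11b}'' and propose to lean on --- but that content \emph{is} the statement being proved, so it cannot be cited without circularity. Concretely, two things need genuine arguments. (a) Given $E \in M^m(v)_i$ lying over $E'$, the classifying map $\Hom(\mcO_C(-m-1),E)^{\vee} \to \Ext^1(E',\mcO_C(-m-1))$ of the extension (\ref{eq:destabE-}) must be injective, so that its image is an honest $i$-dimensional subspace; this holds because a kernel would split off a copy of $\mcO_C(-m-1)$ as a direct summand of $E$, whose twist $\mcO_C(-1) \subset E(-mC)$ contradicts condition (1) of Definition \ref{defin:m-st}. (b) Conversely --- and this is the real content --- the universal extension $0 \to V^{\vee}\otimes\mcO_C(-m-1) \to E^- \to E' \to 0$ attached to an \emph{arbitrary} $i$-dimensional subspace $V \subset \Ext^1(E',\mcO_C(-m-1))$ must again be $m$-stable; this is not a formal consequence of vanishing of boundary $\Hom$ and $\Ext$ groups, and it is precisely what the present paper invokes elsewhere as \cite[Proposition 4.7]{ny11c}. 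Your sketch asserts both (a) and (b) follow from ``the stability hypotheses'' without argument, so as written the proposal presupposes the theorem rather than proving it. (A small slip as well: $E$ is the middle term of the extension (\ref{eq:destabE-}), not ``the kernel of a surjection $E \twoheadrightarrow E'$''; it is recovered from the extension class, which is what makes the $\Ext^1$-classification work.)
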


\section{Birational geometry of moduli spaces} \label{sec:bir}
In this section, we will prove that the diagram (\ref{eq:nyintro}) 
realizes the MMP. 
The key ingredient is 
to compute the normal bundles 
of the fibers of $\xi_{m}^{\pm}$, 
following the arguments of 
Ellingsrud-G\"{o}ttsche \cite{eg95} 
and Friedman-Qin \cite{fq95}.
We keep the notations as in the previous section. 
Fix integers $m, i \in \mathbb{Z}_{\geq 0}$. 
Let 
\begin{equation} \label{eq:univpm}
\mcE^- \in \Coh(\hatP^2 \times M^{m}(v)), \quad 
\mcE^+ \in \Coh(\hatP^2 \times M^{m+1}(v))
\end{equation}
be the universal families, 
and let 
\begin{equation} \label{eq:mcWpm}
\mcW^-_{i}:=\mcE xt^1_{p}(\mcE'_{i}, q^*\mcO_{C}(-m-1)), \quad 
\mcW^+_{i}:=\mcE xt^1_{p}(q^*\mcO_{C}(-m-1), \mcE'_{i}) 
\end{equation}
be vector bundles on $M^{m, m+1}(v)_{i}$. 
We consider the Grassmannian bundles 
\begin{equation} \label{eq:grbdlpm}
\begin{aligned}
&\pi^- \colon G^-_{i}:=\Gr\left(i, \mcW^-_{i} \right) 
\to M^{m, m+1}(v)_{i}, \\
&\pi^+ \colon G^+_{i}:=\Gr\left(i, \mcW^+_{i} \right) 
\to M^{m, m+1}(v)_{i}, 
\end{aligned}
\end{equation}
which are isomorphic to the Brill-Noether loci 
$M^m(v)_{i}, M^{m+1}(v)_{i}$, respectively, 
by Theorem \ref{thm:bn}. 
On $G^{\pm}_{i}$, we have the following tautological sequences: 
\begin{equation} \label{eq:tautogr}
0 \to \mcS^{\pm}_{i} \to \pi^{\pm*}\mcW^{\pm}_{i} \to \mcQ^{\pm}_{i} \to 0. 
\end{equation}
Let $g^{\pm} \colon \hatP^2 \times G^{\pm}_{i} \to G^{\pm}_{i}$, 
$h^{\pm} \colon \hatP^2 \times G^{\pm}_{i} \to \hatP^2$ 
be the projections. 
We start with the following lemma. 

\begin{lem}
There exists an exact sequence 
\begin{equation} \label{eq:univg}
0 \to g^{-*}\mcS^{-\vee}_{i} \otimes h^{-*}\mcO_{C}(-m-1) \to 
\mcE^-|_{G^-_{i}} \to \pi^{-*}_{X}\mcE'_{i} \to 0
\end{equation}
on $\Coh(\hatP^2 \times G^-_i)$. 
Similarly, we have 
\[
0 \to \pi^{+*}_{X}\mcE'_{i} \to \mcE^+|_{G^+_{i}} \to 
g^{+*}\mcS^+_{i} \otimes h^{+*}\mcO(-m-1) \to 0. 
\]
on $\Coh(\hatP^2 \times G^+_i)$. 

\end{lem}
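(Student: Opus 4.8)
The plan is to realize (\ref{eq:univg}) as a \emph{universal extension} over the Grassmannian bundle $G^-_i$, and then to identify its middle term with $\mcE^-|_{G^-_i}$ using that the framed moduli space $M^m(v)$ is fine. Write $\pi^-_X = \id_{\hatP^2} \times \pi^-$. First I would compute the relative $\mcE xt$ sheaves governing the extension. By flat base change along the smooth morphism $\pi^-$,
\[
\mcE xt^q_{g^-}\left(\pi^{-*}_X\mcE'_i,\, h^{-*}\mcO_C(-m-1)\right)
\cong \pi^{-*}\mcE xt^q_p\left(\mcE'_i,\, q^*\mcO_C(-m-1)\right).
\]
Since $E'$ is $m$-stable, condition (1) of Definition \ref{defin:m-st} together with $\mcO_{\hatP^2}(C)|_C \cong \mcO_C(-1)$ gives $\Hom(E', \mcO_C(-m-1)) = 0$, so the $q = 0$ term vanishes and the $q = 1$ term is $\pi^{-*}\mcW^-_i$.

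The vanishing of the $q=0$ term forces the local-to-global spectral sequence computing the group $\Ext^1_{\hatP^2 \times G^-_i}(\pi^{-*}_X\mcE'_i,\, g^{-*}\mcS^{-\vee}_i \otimes h^{-*}\mcO_C(-m-1))$ to collapse in total degree one, giving a canonical isomorphism
\[
\Ext^1_{\hatP^2 \times G^-_i}\left(\pi^{-*}_X\mcE'_i,\, g^{-*}\mcS^{-\vee}_i \otimes h^{-*}\mcO_C(-m-1)\right)
\cong \Hom_{G^-_i}\left(\mcS^-_i,\, \pi^{-*}\mcW^-_i\right).
\]
I would let $e^-$ be the class corresponding to the tautological inclusion $\mcS^-_i \hookrightarrow \pi^{-*}\mcW^-_i$ of (\ref{eq:tautogr}), and take (\ref{eq:univg}) to be the associated extension, with middle term $\mcE$. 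Flatness of $\mcE$ over $G^-_i$ is immediate, since it is an extension of the two flat sheaves $\pi^{-*}_X\mcE'_i$ and $g^{-*}\mcS^{-\vee}_i \otimes h^{-*}\mcO_C(-m-1)$. Over a point $[V] \in G^-_i$ above $[E'] \in M^{m,m+1}(v)_i$, with $V \subset \Ext^1(E', \mcO_C(-m-1))$, base change identifies the restriction of $e^-$ with the inclusion $V \hookrightarrow \Ext^1(E', \mcO_C(-m-1))$; by the description of the destabilizing sequence (\ref{eq:destabE-}) in \cite{ny11b}, the corresponding extension is precisely the $m$-stable sheaf $E$ with $\xi^-_m(E) = E'$ and $\Hom(\mcO_C(-m-1), E) \cong V^\vee$. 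Thus $\mcE$ is a flat family of $m$-stable sheaves whose classifying map is the identification $\pi^- \cong \xi^-_{m,i}$ of Theorem \ref{thm:bn}. Since the framing rigidifies the problem, $M^m(v)$ carries a genuine universal family with no line-bundle ambiguity, so $\mcE \cong \mcE^-|_{G^-_i}$ and (\ref{eq:univg}) follows.

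The second sequence is obtained by the symmetric argument with the roles of subobject and quotient exchanged: one uses $\mcW^+_i = \mcE xt^1_p(q^*\mcO_C(-m-1), \mcE'_i)$, the vanishing $\Hom(\mcO_C(-m-1), E') = 0$ coming from condition (2) of $(m+1)$-stability, and forms the universal extension of $g^{+*}\mcS^+_i \otimes h^{+*}\mcO(-m-1)$ by $\pi^{+*}_X\mcE'_i$; here the subbundle $\mcS^+_i$ itself (rather than its dual) appears in the quotient term. The step I expect to be most delicate is the fiberwise identification: matching the universal extension class with the exact sequence (\ref{eq:destabE-}) of \cite{ny11b}, checking that the evaluation map is the canonical inclusion and that the duality between $\Hom(\mcO_C(-m-1), E)$ and the subspace $V$ is the correct one. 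The only other point requiring care is the vanishing $\Hom(E', \mcO_C(-m-1)) = 0$, which is what makes the extension class canonically the tautological one.
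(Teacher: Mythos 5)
Your proposal is correct and follows essentially the same route as the paper: the paper likewise identifies $\mcE^{\pm}|_{G^{\pm}_{i}}$ with the relative universal extension over the Grassmann bundle, citing \cite[Proposition 4.7]{ny11c} for the fiberwise statement that the extension of $E'$ by $V^{\vee}\otimes\mcO_{C}(-m-1)$ associated to $V\subset\Ext^1(E',\mcO_C(-m-1))$ is $m$-stable, and then concludes by the universal property of the fine moduli space. Your write-up just makes explicit the standard machinery (relative $\mcE xt$ sheaves, the tautological extension class, flatness, and the classifying-map argument) that the paper's two-line proof leaves implicit.
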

\begin{proof}
Let us take an object 
$E' \in M^{m, m+1}(v)_{i}$ and 
an $i$-dimensional subspace 
$V \subset \Ext^1(E', \mcO_{C}(-m-1))$. 
Then by \cite[Proposition 4.7]{ny11c}, 
the associated universal extension 
\[
0 \to V^{\vee} \otimes \mcO_{C}(-m-1) \to E^- \to E' \to 0
\]
defines an $m$-stable sheaf $E^- \in M^m(v)$. 
Hence $\mcE^-|_{G^-_{i}}$ coincides with 
the universal extension on $\hatP^2 \times G^-_{i}$, 
and the first assertion follows. 
The proof of the second assertion is similar. 
\end{proof}

\subsection{Birational geometry of moduli spaces}
The goal of this subsection is to prove the following: 
\begin{thm}\label{thm:normal}
For any integers $m, i \in \mathbb{Z}_{\geq 0}$, 
we have isomorphisms 
\begin{align*}
&\mathcal{N}_{G^-_{i}/M^{m}(v)} \cong 
\mathcal{S}^-_{i} \otimes \pi^{-*}\mcW^+_{i}, \\
&\mathcal{N}_{G^+_{i}/M^{m+1}(v)} \cong 
\mathcal{S}^+_{i} \otimes \pi^{+*}\mcW^-_{i}. 
\end{align*}
\end{thm}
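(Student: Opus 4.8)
The plan is to compute the normal bundle $\mathcal{N}_{G^-_i/M^m(v)}$ via deformation theory, identifying it with a relative $\mathcal{E}xt$-sheaf and then evaluating that sheaf using the universal extension sequence (\ref{eq:univg}). First I would recall that since $M^m(v)$ is a fine moduli space of framed sheaves, its tangent space at a point $[E^-]$ is $\Ext^1(E^-, E^-)$ (framed $\Ext$, i.e. deformations fixing the framing at $l_\infty$), and more globally the tangent bundle $\mathcal{T}_{M^m(v)}$ is the relative $\mathcal{E}xt^1$ of the universal family with itself. The Brill--Noether stratum $G^-_i \cong M^m(v)_i$ carries its own tangent bundle, and the normal bundle fits into the standard short exact sequence
\[
0 \to \mathcal{T}_{G^-_i} \to \mathcal{T}_{M^m(v)}|_{G^-_i} \to \mathcal{N}_{G^-_i/M^m(v)} \to 0.
\]
So the core of the argument is to understand the restriction $\mathcal{T}_{M^m(v)}|_{G^-_i}$ in terms of the data $\mathcal{S}^-_i$, $\mathcal{E}'_i$, $\mathcal{O}_C(-m-1)$, and then to split off the tangent directions internal to the Grassmannian bundle.

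The key computational step is to apply the relative $\mathcal{H}om$ functor to the universal extension (\ref{eq:univg}),
\[
0 \to g^{-*}\mcS^{-\vee}_i \otimes h^{-*}\mcO_C(-m-1) \to \mcE^-|_{G^-_i} \to \pi^{-*}_X \mcE'_i \to 0,
\]
in both variables and take the resulting spectral sequence / long exact sequences of relative $\mathcal{E}xt$-sheaves along $g^-$. I expect the off-diagonal terms to produce exactly the building blocks: $\mathcal{E}xt^1$ of $\mcO_C(-m-1)$ against $\mcE'_i$ gives (a twist of) $\mcW^+_i$, while $\mathcal{E}xt^1$ of $\mcE'_i$ against $\mcO_C(-m-1)$ gives $\mcW^-_i$, by the very definitions (\ref{eq:mcWpm}). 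The dual bundle $\mcS^{-\vee}_i$ appearing in (\ref{eq:univg}) should, after these manipulations, combine with the $\mcW^+_i$ factor to yield $\mathcal{S}^-_i \otimes \pi^{-*}\mcW^+_i$; the Grassmann-bundle tangent directions $\mathcal{T}_{G^-_i/M^{m,m+1}(v)_i} \cong \mathcal{S}^{-\vee}_i \otimes \mathcal{Q}^-_i$ and the pullback $\pi^{-*}\mathcal{T}_{M^{m,m+1}(v)_i}$ should account for the remaining terms, so that they cancel against $\mathcal{T}_{G^-_i}$ in the normal-bundle sequence. The computation for $\mathcal{N}_{G^+_i/M^{m+1}(v)}$ is entirely symmetric, using the second universal extension and swapping the roles of $\mcW^\pm_i$ and $\mcS^\pm_i$.

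The main obstacle I anticipate is bookkeeping the several vanishing and identification claims needed to collapse the spectral sequence: one must verify that the relevant $\mathcal{E}xt^0$ and $\mathcal{E}xt^2$ terms either vanish or contribute precisely the tautological-bundle pieces, and that the maps in the long exact sequence are the expected ones (in particular that the composite identifying the normal directions is an isomorphism rather than merely a surjection). This hinges on the stability conditions — the defining vanishings $\Hom(\mcO_C(-m-1), E')=\Hom(E', \mcO_C(-m-1))=0$ from $m$- and $(m+1)$-stability of $E'$ — and on the rigidity of $\mcO_C(-m-1)$ on the exceptional curve $C \cong \bP^1$ (its self-$\Ext$ groups vanish in the relevant degrees). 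I would isolate these as preliminary vanishing lemmas, proved fiberwise over a point of $M^{m,m+1}(v)_i$ and then globalized by flatness, before assembling the final identification.
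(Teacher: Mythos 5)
Your proposal follows essentially the same route as the paper's proof: you build the comparison map exactly as in Lemma \ref{lem:natural}, by applying the two relative $\mathcal{H}om$ functors (in each variable) to the universal extension (\ref{eq:univg}), and the stability/rigidity vanishings you list are the ones used in Lemma \ref{lem:surj} to make the composite $\delta \colon \mcT_{M^m(v)}|_{G^-_i} \to \mcS^-_i \otimes \pi^{-*}\mcW^+_i$ surjective. Up to that point you and the paper agree.

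However, the step you defer as ``the main obstacle'' is a genuine gap in the argument as written, and it is exactly where the paper's remaining work lies; it is not settled by vanishing lemmas alone. Two further inputs are needed. First, one must show that the subsheaf $\mcT_{G^-_i} \subset \mcT_{M^m(v)}|_{G^-_i}$ is actually killed by $\delta$: this requires identifying the differential of the inclusion $G^-_i \hookrightarrow M^m(v)$, restricted to the relative tangent directions $\mcS^{-\vee}_i \otimes \mcQ^-_i$, with the connecting map $\alpha_1$ of (\ref{eq:di}), and identifying the induced map on $\pi^{-*}\mcT_{M^{m,m+1}(v)_i}$ with $\delta_2 \circ \alpha_2 = 0$ --- a compatibility of geometric differentials with algebraic connecting maps, not a vanishing statement. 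Second, knowing $\mcT_{G^-_i} \subseteq \Ker(\delta)$ one still needs equality; the paper gets this from a dimension count (Lemma \ref{lem:dimext}) based on the ADHM dimension formula $\dim M^m(v) = -2r\ch_2 - k^2$ together with Riemann--Roch and Serre duality, after which a fiberwise inclusion of equal dimensions --- equivalently a map of torsion-free sheaves of equal rank --- must be an isomorphism. Finally, note that one of your ``preliminary vanishing lemmas,'' namely $\Ext^2(E', E'(-l_\infty))=0$, is not formal: Serre duality and stability on $\hatP^2$ alone do not give it, and the paper (Lemma \ref{lem:vanobstr}) proves it by passing to $(f_*E')^{\vee\vee}$ on $\bP^2$ and running an inductive twisting argument there. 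With these three ingredients supplied, your outline becomes the paper's proof.
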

See (\ref{eq:mcWpm}), (\ref{eq:grbdlpm}), and (\ref{eq:tautogr}) 
for the notations used in the above theorem. 
We divide the proof of the theorem into several lemmas.

\begin{lem} \label{lem:natural}
There exists a natural morphism 
\[
\delta \colon \mcT_{M^m(v)}|_{G^-_{i}} 
\to \mcS^-_{i} \otimes \pi^{-*}\mcW^+_{i}. 
\]
\end{lem}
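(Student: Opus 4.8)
The statement asks for a natural morphism
\[
\delta \colon \mcT_{M^m(v)}|_{G^-_{i}}
\to \mcS^-_{i} \otimes \pi^{-*}\mcW^+_{i}.
\]
We have $G^-_i \cong M^m(v)_i$, and $\mcW^+_i = \mcE xt^1_p(q^*\mcO_C(-m-1), \mcE'_i)$. The universal family $\mcE^-$ on $\hatP^2 \times M^m(v)$ restricts to $G^-_i$ and fits in the universal extension (\ref{eq:univg}):
\[
0 \to g^{-*}\mcS^{-\vee}_i \otimes h^{-*}\mcO_C(-m-1) \to \mcE^-|_{G^-_i} \to \pi^{-*}_X \mcE'_i \to 0.
\]

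I need to produce a map from the tangent bundle of the moduli space (which measures deformations, i.e. $\mcE xt^1$ of the universal sheaf with itself) to a tensor of the tautological subbundle with $\mcW^+_i$.

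**The plan.** The tangent bundle of a moduli space of sheaves is given by $\mcT_{M^m(v)} \cong \mcE xt^1_{\pi^-}(\mcE^-, \mcE^-)$ (framed, so with appropriate normalization; the framing kills the trace/endomorphism part). So I want to construct, on $G^-_i$, a morphism
\[
\mcE xt^1_{g^-}(\mcE^-|_{G^-_i}, \mcE^-|_{G^-_i}) \to \mcS^-_i \otimes \pi^{-*}\mcW^+_i.
\]
The natural idea: use the two-term filtration (\ref{eq:univg}). Writing $A := g^{-*}\mcS^{-\vee}_i \otimes h^{-*}\mcO_C(-m-1)$ (the sub) and $B := \pi^{-*}_X \mcE'_i$ (the quotient), I would compose the natural maps
\[
\mcE xt^1_{g^-}(\mcE^-, \mcE^-) \to \mcE xt^1_{g^-}(A, \mcE^-) \to \mcE xt^1_{g^-}(A, A),
\]
where the first arrow is restriction to the subsheaf $A$ and the second is projection to the sub-quotient $A$ (using the filtration). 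Then I would identify $\mcE xt^1_{g^-}(A,A)$ and relate it to the target. The point is that $A = g^{-*}\mcS^{-\vee}_i \otimes h^{-*}\mcO_C(-m-1)$, so self-$\mcE xt^1$ along $g^-$ pulls $\mcS^{-\vee}_i$ out as $\mcS^{-\vee}_i \otimes \mcS^-_i$ coefficients — but that is not quite the target. The correct composition should instead land in $\mcE xt^1_{g^-}(B, A)$, the "off-diagonal" piece:
\[
\mcE xt^1_{g^-}(\mcE^-, \mcE^-) \xrightarrow{\;\text{restrict to sub } A\;} \mcE xt^1_{g^-}(A, \mcE^-) \xrightarrow{\;\text{project to quotient... }}
\]
Let me reconsider: I want the piece $\mcE xt^1_{g^-}(B,A)$, since
\[
\mcE xt^1_{g^-}(B, A) = \mcE xt^1_{g^-}\!\bigl(\pi^{-*}_X\mcE'_i,\; g^{-*}\mcS^{-\vee}_i \otimes h^{-*}\mcO_C(-m-1)\bigr)
\cong \mcS^{-\vee}_i \otimes \mcE xt^1_p(\mcE'_i, \mcO_C(-m-1)) = \mcS^{-\vee}_i \otimes \mcW^-_i,
\]
which is again not the target $\mcS^-_i \otimes \mcW^+_i$. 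So the relevant off-diagonal piece is instead $\mcE xt^1_{g^-}(A, B)$, which gives
\[
\mcE xt^1_{g^-}(A, B) \cong \mcS^-_i \otimes \mcE xt^1_p(\mcO_C(-m-1), \mcE'_i) = \mcS^-_i \otimes \mcW^+_i,
\]
exactly the target. So the construction is: restrict a first-order deformation of $\mcE^-$ to the subsheaf $A$, then compose with the quotient map $\mcE^- \twoheadrightarrow B$, producing a class in $\mcE xt^1_{g^-}(A, B)$, which I then identify with $\mcS^-_i \otimes \pi^{-*}\mcW^+_i$ using projection formula and flat base change.

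**Key steps in order.** First, I would recall (or cite from Nakajima–Yoshioka, and the general deformation theory of framed sheaves) the identification $\mcT_{M^m(v)} \cong \mcE xt^1_{g^-}(\mcE^-|_{G^-_i}, \mcE^-|_{G^-_i})$ on $G^-_i$, possibly with a trace-free/framing correction that does not affect the construction. Second, I would apply the functor $\mcE xt^\bullet_{g^-}(-, \mcE^-|_{G^-_i})$ and then $\mcE xt^\bullet_{g^-}(A, -)$ to the short exact sequence (\ref{eq:univg}), extracting the composite
\[
\mcE xt^1_{g^-}(\mcE^-, \mcE^-) \longrightarrow \mcE xt^1_{g^-}(A, \mcE^-) \longrightarrow \mcE xt^1_{g^-}(A, B)
\]
as $\delta$. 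The first arrow restricts a deformation to the sub $A$; the second applies the quotient map $\mcE^- \to B$. Third, I would compute $\mcE xt^1_{g^-}(A, B)$ via the projection formula, pulling $\mcS^-_i$ out of the relative $\mcE xt$ (since $\mcS^{-\vee}_i$ is pulled back from $G^-_i$ along $g^-$, hence $\mcS^-_i$ factors out of $\mcE xt^1_{g^-}(g^{-*}\mcS^{-\vee}_i \otimes -, -)$), and flat base change to rewrite $\mcE xt^1_p(\mcO_C(-m-1), \mcE'_i)$ as $\pi^{-*}\mcW^+_i$; here $h^{-*}\mcO_C(-m-1)$ and the structure of $\mcE'_i$ over $M^{m,m+1}(v)_i$ pulled back along $\pi^-$ are what produce the $\mcW^+_i$ factor.

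**Main obstacle.** I expect the routine part to be the projection-formula identification of $\mcE xt^1_{g^-}(A,B)$ with $\mcS^-_i \otimes \pi^{-*}\mcW^+_i$; the genuinely delicate points are (i) getting the correct deformation-theoretic identification of $\mcT_{M^m(v)}$ for \emph{framed} sheaves — the framing along $l_\infty$ modifies the usual $\mcE xt^1(\mcE,\mcE)$ description, and I must ensure the framing contribution is transverse to the $\mcO_C(-m-1)$-directions so it does not interfere with $\delta$ — and (ii) checking that the constructed $\delta$ is genuinely natural and well-defined as a morphism of sheaves on $G^-_i$, rather than merely fiberwise. The naturality is what makes $\delta$ globalize; I would verify it by noting that every arrow in the composition (restriction to the subsheaf $A$ of the tautological filtration, and composition with the tautological quotient $\mcE^- \to B$) is induced by the universal exact sequence (\ref{eq:univg}), hence is automatically compatible with base change and defines a morphism of the coherent $\mcE xt$-sheaves over the whole base. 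The fact that $\delta$ is an isomorphism (needed for the full Theorem \ref{thm:normal}) will be handled in the subsequent lemmas, so at this stage I only need existence and naturality of the map.
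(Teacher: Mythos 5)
Your construction is essentially the paper's: the paper defines $\delta=\delta_2\circ\delta_1$, where $\delta_1$ pushes forward along the quotient $\mcE^-|_{G^-_i}\to\pi^{-*}\mcE'_i$ in the second argument and $\delta_2$ restricts to the subsheaf $g^{-*}\mcS^{-\vee}_i\otimes h^{-*}\mcO_C(-m-1)$ in the first, landing in the same off-diagonal piece $\mcE xt^1_{g^-}\bigl(g^{-*}\mcS^{-\vee}_i\otimes h^{-*}\mcO_C(-m-1),\,\pi^{-*}\mcE'_i\bigr)\cong\mcS^-_i\otimes\pi^{-*}\mcW^+_i$, and your swapped order (restrict-to-sub first, then project-to-quotient) gives the identical composite by bifunctoriality of $\mcE xt$. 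The framing subtlety you flag is resolved in the paper exactly by your ``transversality'' intuition: the tangent bundle is $\mcE xt^1_{p_M}(\mcE^-,\mcE^-(-l_\infty\times M^m(v)))$, and the twist by $-l_\infty$ disappears against the sub, since it is supported on $C$ and $\mcO_C(l_\infty)=\mcO_C$.
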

\begin{proof}
By the deformation-obstruction theory for framed sheaves 
(cf. \cite[Theorem 4.3]{bm11}), 
the tangent bundle of $M^m(v)$ is given as 
\[
\mcT_{M^m(v)} \cong 
\mcE xt^1_{p_{M}}\left(
\mcE^-, 
\mcE^-(-l_{\infty} \times M^m(v))
\right). 
\]

Now, applying the functor 
$\mcH om_{g^-}(\mcE^-|_{G^-_{i}},(-) \otimes \mcO(-l_{\infty} \times G^-_{i}))$ 
to the exact sequence (\ref{eq:univg}), 
we get the morphism 
\[
\delta_{1} \colon 
\mcE xt^1_{g^-}\left(
\mcE^-|_{G^-_{i}}, 
\mcE^-|_{G^-_{i}}(-l_{\infty} \times G^-_{i})
\right)
\to 
\mcE xt^1_{g^-}\left(
\mcE^-|_{G^-_{i}}, 
\pi^{-*}\mcE'_{i}(-l_{\infty} \times G^-_{i})
\right). 
\]

On the other hand, applying the functor 
$\mcH om_{g^-}(-, \pi^{-*}\mcE'_{i}(-l_{\infty} \times G^-_{i}))$ 
to the exact sequence (\ref{eq:univg}), 
we get the morphism 
\begin{align*}
\delta_{2} \colon 
&\mcE xt^1_{g^-}\left(
\mcE^-|_{G^-_{i}}, 
\pi^{-*}\mcE'_{i}(-l_{\infty} \times G^-_{i})
\right)
\to \\
&\quad \quad \mcE xt^1_{g^-}\left(
g^{-*}\mcS^{-\vee}_{i} \otimes h^{-*}\mcO_{C}(-m-1), \pi^{-*}\mcE'_{i}
\right), 
\end{align*}
since $\mcO_{C}(l_{\infty})=\mcO_{C}$. 
Note that we have 
\[
\mcE xt^1_{g^-}\left(
g^{-*}\mcS^{-\vee}_{i} \otimes h^{-*}\mcO_{C}(-m-1), \pi^{-*}\mcE'_{i}
\right)
\cong 
\mcS^-_i \otimes \pi^{-*}\mcW^+_{i}. 
\]
By the above arguments, we have a morphism 
\[
\delta:=\delta_{2} \circ \delta_{1} \colon 
\mcT_{M^m(v)}|_{G^-} \to \mcS^-_{i} \otimes \pi^{-*}\mcW^+_{i}. 
\]
\end{proof}

In the following lemma, we show that 
our morphism $\delta$ is surjective, 
by checking it on the fibers of the morphism 
$\pi^- \colon G^-_i \to M^{m, m+1}(v)_i$ 
in (\ref{eq:grbdlpm}). 
By abuse of notation, we also denote by 
$\delta_1, \delta_2$ 
their restrictions to the $\pi^-$-fibers. 

\begin{lem} \label{lem:surj}
Let us take an object 
$E' \in M^{m, m+1}(v)_{i}$ 
and put $W^-:=\Ext^1(E', \mcO_{C}(-m-1))$, 
$W^+:=\Ext^1(\mcO_{C}(-m-1), E')$. 
Take also an $i$-dimensional subspace 
$V \subset W^-$, 
and let $E^- \in M^m(v)$ 
be the associated universal extension. 
Then we have the following exact sequences: 
\begin{equation} \label{eq:di}
\begin{aligned}
&\Hom(V, W^-/V) 
\xrightarrow{\alpha_{1}} 
\Ext^1(E^-, E^-(-l_{\infty})) 
\xrightarrow{\delta_{1}} 
\Ext^1(E^-, E'(-l_{\infty})) \to 0, \\
&\Ext^1(E', E'(-l_{\infty})) 
\xrightarrow{\alpha_{2}} 
\Ext^1(E^-, E'(-l_{\infty})) 
\xrightarrow{\delta_{2}} 
V \otimes W^+ \to 0. 
\end{aligned}
\end{equation}
\end{lem}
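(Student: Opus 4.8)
The plan is to verify both exact sequences in (\ref{eq:di}) by unwinding the definitions of $\delta_1$ and $\delta_2$ from Lemma \ref{lem:natural}, restricted to a single $\pi^-$-fiber over the point $[E'] \in M^{m, m+1}(v)_i$. Over such a fiber, the Grassmannian bundle $G^-_i$ restricts to the ordinary Grassmannian $\Gr(i, W^-)$, the tautological subbundle $\mcS^-_i$ restricts to the universal subbundle whose fiber at $[V]$ is $V$ itself, and the sheaf $\pi^{-*}\mcW^+_i$ restricts to $W^+ \otimes \mcO$. Thus the target $\mcS^-_i \otimes \pi^{-*}\mcW^+_i$ evaluated at $[V]$ is $V \otimes W^+$, and the universal extension (\ref{eq:univg}) specializes to the defining short exact sequence $0 \to V^\vee \otimes \mcO_C(-m-1) \to E^- \to E' \to 0$ from the previous lemma.

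First I would establish the second sequence. Applying $\Hom(-, E'(-l_\infty))$ to the short exact sequence above yields a long exact sequence; the relevant portion reads
\[
\Ext^1(E', E'(-l_\infty)) \xrightarrow{\alpha_2} \Ext^1(E^-, E'(-l_\infty)) \xrightarrow{\delta_2} \Ext^1\bigl(V^\vee \otimes \mcO_C(-m-1), E'(-l_\infty)\bigr) \to \Ext^2(E', E'(-l_\infty)).
\]
The third term is canonically $V \otimes \Ext^1(\mcO_C(-m-1), E') = V \otimes W^+$ using $\mcO_C(l_\infty) = \mcO_C$. To obtain exactness on the right as stated, I would need the connecting map into $\Ext^2(E', E'(-l_\infty))$ to vanish; this should follow from the smoothness of the moduli spaces established in Theorem \ref{thm:adhm} (1), which forces the relevant obstruction $\Ext^2$ groups to behave well, together with the framing condition at $l_\infty$. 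For the first sequence, I would apply $\Hom(E^-, -(-l_\infty))$ to the same short exact sequence; the map $\delta_1$ arises as the induced map $\Ext^1(E^-, E^-(-l_\infty)) \to \Ext^1(E^-, E'(-l_\infty))$, and the kernel term $\Hom(V, W^-/V)$ should be identified with $\Ext^1(E^-, V^\vee \otimes \mcO_C(-m-1)(-l_\infty))$, again invoking $\mcO_C(l_\infty) = \mcO_C$ and a direct computation of $\Ext$-groups between $E^-$ and $\mcO_C(-m-1)$ via the defining extension.

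The main obstacle I anticipate is the surjectivity on the right of each sequence, equivalently the vanishing of the two connecting maps into the $\Ext^2$ terms. The cleanest route is to exploit smoothness of all moduli spaces in play (Theorem \ref{thm:adhm}), which constrains the dimensions of tangent and obstruction spaces, and to compute the relevant $\Ext$-groups between $\mcO_C(-m-1)$ and $E^-, E'$ directly using the $m$-stability and $(m+1)$-stability conditions of Definition \ref{defin:m-st}; these conditions are precisely engineered so that $\Hom$ and higher $\Ext$ groups involving $\mcO_C(-1)$ and $\mcO_C$ vanish. A secondary technical point is the careful identification of the map $\alpha_1$ with the natural inclusion $\Hom(V, W^-/V) \hookrightarrow \Ext^1(E^-, E^-(-l_\infty))$, which encodes the tangent space to the Grassmannian $\Gr(i, W^-)$ at $[V]$; I expect this to follow from the functoriality of the universal extension and the standard identification $\mcT_{\Gr(i, W^-), [V]} \cong \Hom(V, W^-/V)$.
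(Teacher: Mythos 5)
Your overall architecture coincides with the paper's: restrict to a $\pi^-$-fiber, specialize the universal extension to $0 \to V^\vee \otimes \mcO_C(-m-1) \to E^- \to E' \to 0$, apply the functors $\Hom(E^-, (-)\otimes\mcO(-l_\infty))$ and $\Hom(-, E'(-l_\infty))$, identify the outer terms as $\Hom(V, W^-/V)$ and $V \otimes W^+$ (via $\mcO_C \otimes \mcO(\pm l_\infty) \cong \mcO_C$ and by applying $\Hom(-, \mcO_C(-m-1))$ to the extension), and reduce surjectivity of $\delta_1, \delta_2$ to the vanishing of two $\Ext^2$ groups. All of this is exactly what the paper does (after first reducing to $m=0$ by twisting by $\mcO(-mC)$, a cosmetic simplification).

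The gap is in how you discharge those two vanishings. For $\delta_1$ you need $\Ext^2(E^-, \mcO_C(-m-1)) = 0$; this does follow from the stability conditions, but only after Serre duality on $\hatP^2$, which converts it into $\Hom(\mcO_C(-m), E^-)^\vee$, i.e.\ into condition (2) of $m$-stability --- your ``direct computation using the stability conditions'' is essentially this, and is a minor omission. The serious problem is $\delta_2$: you need $\Ext^2(E', E'(-l_\infty)) = 0$, and your justification --- that it ``should follow from the smoothness of the moduli spaces'' --- is not a proof, and the implication runs the wrong way. Smoothness of a moduli space says its tangent space has dimension $\dim M$; it does not force the obstruction space to vanish (moduli of stable sheaves on K3 surfaces are smooth while $\Ext^2(E,E) \neq 0$; it is the vanishing of obstructions that is used to prove smoothness, not conversely). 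To extract the vanishing from smoothness you would additionally need to verify that $\dim M$ at $[E']$ equals the expected dimension $-\chi(E', E'(-l_\infty))$, together with the simplicity statement $\Hom(E', E'(-l_\infty))=0$, i.e.\ a Riemann--Roch computation matching the quiver dimension formula of Theorem \ref{thm:adhm}(1); you never carry this out. The paper instead proves the vanishing directly in a dedicated lemma (Lemma \ref{lem:vanobstr}): Serre duality reduces it to $\Hom(F(-l_\infty), F \otimes \omega_{\hatP^2}) = 0$ for a $0$-stable sheaf $F$; this group injects into $\Hom_{\bP^2}\left(G(-l_\infty), G \otimes \omega_{\bP^2}\right)$ with $G = (f_*F)^{\vee\vee}$; and the framing trivialization $G|_{l_\infty} \cong \mcO_{l_\infty}^{\oplus \ch_0(G)}$ yields, by induction on twists, $\Hom(G, G(-2)) \cong \Hom(G, G(-i-3))$ for all $i \geq 0$, which vanishes for $i \gg 0$. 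Without this lemma (or the alternative dimension-count argument carried out in full), the surjectivity of $\delta_2$, hence the second exact sequence in (\ref{eq:di}), remains unproved.
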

\begin{proof}
By replacing the object 
$E' \in M^{m, m+1}(v)_i$ with 
$E' \otimes \mcO_{\hatP^2}(-mC) \in M^{m, m+1}(v.e^{-mC})_i$, 
we may assume $m=0$. 
Let us take an object 
$E' \in M^{0, 1}(v)_{i}$ 
and an $i$-dimensional subspace 
$V \subset \Ext^1(E', \mcO_{C}(-1))$. 
Let 
\begin{equation} \label{eq:tautoext}
0 \to V^{\vee} \otimes \mcO_{C}(-1) \to E^- \to E' \to 0
\end{equation}
be the associated universal extension. 
By applying the functor 
$\Hom(E^-, (-) \otimes \mcO(-l_\infty))$, 
we have the exact sequence 
\begin{align*}
&\Ext^1(E^-, V^{\vee} \otimes \mcO_{C}(-1)) \to 
\Ext^1(E^-, E^-(-l_\infty)) \xrightarrow{\delta_{1}} 
\Ext^1(E^-, E'(-l_\infty)) \\
\to &\Ext^2(E^-, V^{\vee} \otimes \mcO_{C}(-1)). & & 
\end{align*}
Note that we have used the fact that 
$\mcO_C(-l_\infty)=\mcO_C$. 
By Serre duality and the $0$-stability of $E^-$, 
we have the vanishing 
\begin{align*}
\Ext^2(E^-, \mcO_{C}(-1)) 
\cong \Hom(\mcO_{C}, E^-)^{\vee} 
=0 
\end{align*}
and hence $\delta_{1}$ is surjective. 
Furthermore, by applying the functor 
$\Hom(-, \mcO_{C}(-1))$ 
to the exact sequence (\ref{eq:tautoext})
we have the exact sequence 
\[
0 \to V \to W^- \to \Ext^1(E^-, \mcO_{C}(-1)) 
\to \Ext^1(V^{\vee} \otimes \mcO_{C}(-1), \mcO_{C}(-1))=0. 
\]
Hence we have 
\begin{align*}
\Ext^1(E^-, V^{\vee} \otimes \mcO_{C}(-1)) 
\cong \Hom(V, W^-/V) 
\end{align*}
as required. 

Similarly, applying the functor 
$\Hom(-, E'(-l_\infty))$ 
to the exact sequence (\ref{eq:tautoext}), 
we obtain 
\begin{align*}
&\Ext^1(E', E'(-l_{\infty})) 
\xrightarrow{\alpha_{2}} 
\Ext^1(E^-, E'(-l_{\infty})) 
\xrightarrow{\delta_{2}} 
V \otimes W^+ \\
\to &\Ext^2(E', E'(-l_\infty)). 
\end{align*}
By Lemma \ref{lem:vanobstr} below, 
we have the vanishing 
$\Ext^2(E', E'(-l_\infty))=0$ 
and hence $\delta_2$ is surjective. 
\end{proof}

\begin{lem} \label{lem:vanobstr}
Let $F$ be a $0$-stable sheaf. 
Then we have the vanishing 
\[
\Ext^2(F, F(-l_\infty))=0. 
\]
\end{lem}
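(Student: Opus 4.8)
The plan is to deduce the vanishing from Serre duality on the smooth projective surface $\hatP^2$ together with the framing along $l_{\infty}$. First I would record the numerical input: since the center of the blow-up lies in $\bC^2$, the class of $l_{\infty}$ is the pullback $f^{*}(\text{line})$, so that $l_{\infty}^2 = 1$, $l_{\infty} \cdot C = 0$, and $K_{\hatP^2} = f^{*}K_{\bP^2} + C$, whence $K_{\hatP^2} + l_{\infty} = C - 2l_{\infty}$. Serre duality then identifies $\Ext^2(F, F(-l_{\infty}))$ with the dual of $\Hom(F(-l_{\infty}), F \otimes K_{\hatP^2}) \cong \Hom(F, F(C - 2l_{\infty}))$. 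Thus it suffices to show that there is no nonzero homomorphism $\phi \colon F \to F(C - 2l_{\infty})$.

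Next I would exploit the framing to descend the twist. Because $C \cap l_{\infty} = \emptyset$, we have $\mcO_{\hatP^2}(C - 2l_{\infty})|_{l_{\infty}} \cong \mcO_{l_{\infty}}(-2)$, while the framing gives $F|_{l_{\infty}} \cong \mcO_{l_{\infty}}^{\oplus r}$ with $r = \rk F > 0$. Restricting $\phi$ to $l_{\infty}$ therefore lands in $\Hom(\mcO_{l_{\infty}}^{\oplus r}, \mcO_{l_{\infty}}(-2)^{\oplus r}) = H^0(l_{\infty}, \mcO(-2))^{\oplus r^2} = 0$, so $\phi$ vanishes along $l_{\infty}$. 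Since $F$ is torsion free in a neighborhood of $l_{\infty}$ (again because $C \cap l_{\infty} = \emptyset$), multiplication by the section $s$ cutting out $l_{\infty}$ is injective on $F(C - 2l_{\infty})$, and $\phi$ factors as $\phi = s\,\phi_1$ with $\phi_1 \colon F \to F(C - 3l_{\infty})$ again nonzero. Iterating — the restriction of the target to $l_{\infty}$ only becomes more negative — I obtain nonzero maps $\phi_k \colon F \to F(C - (k+2)l_{\infty})$ with $\phi = s^{k}\phi_k$ for every $k \geq 0$.

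Consequently $\Image(\phi) \subseteq \bigcap_{k} s^{k} F(C - 2l_{\infty})$, and torsion freeness near $l_{\infty}$ (Krull's intersection theorem) forces this intersection to vanish in a neighborhood of $l_{\infty}$; in particular $\Image(\phi)$ is zero at the generic point, so it is a torsion sheaf, and since $F$ and $F(C-2l_{\infty})$ are torsion free away from $C$ this torsion is supported on $C$. Finally I would invoke the $0$-stability of $F$: a nonzero torsion quotient of $F$ supported on $C$ which simultaneously embeds into $F(C - 2l_{\infty})$ should produce either a nonzero map $\mcO_C \to F$ or a nonzero map $F \to \mcO_C(-1)$, contradicting conditions (2) and (1) of Definition \ref{defin:m-st}; hence $\phi = 0$. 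The main obstacle is exactly this last step: extracting from the torsion subsheaf/quotient on $C$ a destabilizing morphism forbidden by $0$-stability. This requires a careful analysis of coherent sheaves scheme-theoretically supported on the $(-1)$-curve $C$ together with the identification $\mcO_{\hatP^2}(C)|_C \cong \mcO_C(-1)$, and it is where the defining stability conditions must genuinely be used rather than mere torsion freeness.
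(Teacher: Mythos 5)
Your reduction is correct up to the last step: Serre duality does identify $\Ext^2(F,F(-l_\infty))$ with $\Hom(F,F(C-2l_\infty))^\vee$, and your divisibility argument (restrict to $l_\infty$, factor through multiplication by $s$, iterate, apply Krull's intersection theorem) is valid, since $F$ is torsion free near $l_\infty$; it shows that any such $\phi$ has image a torsion sheaf supported set-theoretically on $C$. The genuine gap is the final claim that such an image ``should produce either a nonzero map $\mcO_C \to F$ or a nonzero map $F \to \mcO_C(-1)$'': this is asserted, not proved, and as you yourself note it is the crux. It does not fall out formally: $T=\Image(\phi)$ lives in general on a thickening of $C$, a quotient of $F$ supported on $C$ need not map onto anything of negative degree (its restriction $T\otimes\mcO_C$ could be torsion plus nonnegative line bundles), and a subsheaf of $F(C-2l_\infty)$ supported on $C$ need not have sections. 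The statement is in fact true, but closing it requires an argument such as: filter $T$ by powers of the ideal sheaf $I_C$, with graded pieces $Q_k=I_C^kT/I_C^{k+1}T$ on $C\cong\bP^1$ and surjections $Q_k\otimes\mcO_C(1)\twoheadrightarrow Q_{k+1}$ (the conormal bundle is $\mcO_C(1)$); if $\Hom(T,\mcO_C(-1))=\Hom(Q_0,\mcO_C(-1))$ vanished, then $Q_0$, hence inductively every $Q_k$, would be globally generated, so the last nonzero piece $I_C^kT$, which is killed by $I_C$, would have a section, giving a nonzero map $\mcO_C\to T\hookrightarrow F(C-2l_\infty)$, i.e.\ a nonzero map $\mcO_C(1)\to F$, and hence (composing with a suitable section of $\mcO_C(1)$) a nonzero map $\mcO_C\to F$, violating condition (2) of Definition \ref{defin:m-st}; otherwise $F\twoheadrightarrow T\to\mcO_C(-1)$ violates condition (1). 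Without some such analysis the proof is incomplete.

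It is worth contrasting this with the paper's proof, which takes the same first step (Serre duality) and the same twist-down-and-restrict iteration, but performs the iteration on $\bP^2$ rather than on $\hatP^2$: citing the proof of Lemma 3.6 of \cite{ny11b}, one has an injection of $\Hom_{\hatP^2}(F(-l_\infty), F\otimes\omega_{\hatP^2})$ into $\Hom_{\bP^2}(G(-l_\infty), G\otimes\omega_{\bP^2})$ with $G=(f_*F)^{\vee\vee}$, and then $\Hom(G,G(-2))\cong\Hom(G,G(-i-3))\cong H^2(G\otimes G^\vee(i))^\vee=0$ for $i\gg 0$ by Serre vanishing. This detour is exactly what makes your difficulty disappear: on $\hatP^2$ the divisor $l_\infty$ is nef but not ample ($l_\infty\cdot C=0$), so divisibility by $s$ can only push the problem onto $C$, never kill it, whereas on $\bP^2$ the line class is ample and the iteration terminates; moreover $G$ is torsion free, so the delicate sheaf theory on thickenings of the $(-1)$-curve is entirely absorbed into the cited injection. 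So either adopt the paper's reduction to $\bP^2$, or supply the $C$-local argument sketched above.
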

\begin{proof}
By Serre duality, we have 
\[
\Ext^2(F, F(-l_\infty))
\cong \Hom(F(-l_\infty), F \otimes \omega_{\hatP^2})^\vee. 
\]

Moreover, as in the proof of 
\cite[Lemma 3.6]{ny11b}, 
we have an injection 
\[
\Hom_{\hatP^2}(F(-l_\infty), F \otimes \omega_{\hatP^2}) 
\hookrightarrow 
\Hom_{\bP^2}\left( 
(f_*F)^{\vee \vee}(-l_\infty), (f_*F)^{\vee \vee} \otimes \omega_{\bP^2}
\right), 
\]
where $f \colon \hatP^2 \to \bP^2$ 
is the blow-up morphism. 
Putting 
$G:=(f_*F)^{\vee \vee}$, 
it is enough to show 
\[
\Hom(G(-l_\infty), G \otimes \omega_{\bP^2})
=\Hom(G, G(-2))=0. 
\]

By applying the functor 
$\Hom(G, -)$ 
to the exact sequence 
\begin{equation} \label{eq:GexP2}
0 \to G(-3) \to G(-2)
\to G(-2)|_{l_\infty} \to 0, 
\end{equation}
we obtain 
\begin{equation} \label{eq:GonP2}
0 \to \Hom\left(G, G(-3) \right) 
\to \Hom\left(G, G(-2) \right) 
\to \Hom\left(G, G(-2)|_{l_\infty} \right). 
\end{equation}

On the other hand, as $G$ is a framed sheaf on $\bP^2$, 
we have $G|_{l_\infty} \cong \mcO_{\bP^2}^{\oplus \ch_0(G)}$. 
Hence we have 
\begin{align*}
\Hom\left(G, G(-2)|_{l_\infty} \right) 
&\cong \Hom\left(G|_{l_\infty}, G(-2)|_{l_\infty} \right) \\
&\cong \Hom\left(\mcO_{l_\infty}, \mcO_{l_\infty}(-2) \right)^{\oplus \ch_0(G)^2} \\
&=0. 
\end{align*}
Combining with the exact sequence (\ref{eq:GonP2}), 
we conclude that 
\[
\Hom(G, G(-2)) \cong 
\Hom(G, G(-3)). 
\]

By tensoring the exact sequence (\ref{eq:GexP2}) 
with $\mcO_{\bP^2}(-i)$ and repeating the above argument, 
we can inductively prove the isomorphism 
\[
\Hom(G, G(-2)) \cong 
\Hom(G, G(-i-3)) 
\]
for all $i \geq 0$. 
By Serre duality, we obtain 
\[
\Hom(G, G(-i-3)) \cong 
H^2(G \otimes G^\vee(i))^\vee, 
\]
and the right hand side vanishes for sufficiently large $i > 0$, 
since $\mcO_{\bP^2}(1)$ is ample. 
We conclude that 
$\Hom(G, G(-2))=0$ as required. 
\end{proof}

We also need the following: 
\begin{lem}\label{lem:dimext}
We have the equalities 
\begin{align*}
&\dim M^m(v)
=\dim G^-_{i} 
+ \rk\left(
\mcS^-_{i} \otimes \pi^{-*}\mcW^+_{i} 
\right), \\
&\dim M^{m+1}(v)
=\dim G^+_{i} 
+ \rk\left(
\mcS^+_{i} \otimes \pi^{+*}\mcW^-_{i} 
\right). 
\end{align*}
\end{lem}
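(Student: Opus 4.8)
The plan is to reduce both identities to a single numerical computation via Euler characteristics, using that all the moduli spaces in sight are smooth. First I would record, from the deformation theory of framed sheaves (cf. \cite{bm11}), that for a point $E^- \in M^m(v)$ the tangent space is $\Ext^1(E^-, E^-(-l_\infty))$ and that $\Hom(E^-, E^-(-l_\infty)) = \Ext^2(E^-, E^-(-l_\infty)) = 0$, the last vanishing being Lemma \ref{lem:vanobstr} after twisting by $\mcO(-mC)$. Hence $\dim M^m(v) = -\chi(E^-, E^-(-l_\infty))$, and likewise $\dim M^{m,m+1}(v)_i = \dim M^m(v - ic_m) = -\chi(E', E'(-l_\infty))$ for a point $E' \in M^{m,m+1}(v)_i$, the latter being an open subscheme of $M^m(v-ic_m)$. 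On the Grassmann-bundle side I would use $\dim G^-_i = \dim M^{m,m+1}(v)_i + i(\rk \mcW^-_i - i)$ together with $\rk(\mcS^-_i \otimes \pi^{-*}\mcW^+_i) = i \cdot \rk \mcW^+_i$, so that the first asserted equality becomes the numerical statement $\dim M^m(v) = \dim M^{m,m+1}(v)_i + i\,\rk\mcW^-_i + i\,\rk\mcW^+_i - i^2$.

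Next I would exploit the universal extension (\ref{eq:univg}), which in the Grothendieck group gives $[E^-] = i[\mcO_C(-m-1)] + [E']$. By bilinearity of $\chi$ and the disjointness relation $\mcO_C(-l_\infty) \cong \mcO_C$, the quantity $\chi(E^-, E^-(-l_\infty))$ expands into $i^2\chi(\mcO_C(-m-1), \mcO_C(-m-1)) + i\chi(\mcO_C(-m-1), E') + i\chi(E', \mcO_C(-m-1)) + \chi(E', E'(-l_\infty))$. A short Riemann--Roch computation, using $\ch(\mcO_C(d)) = (0, C, d + \tfrac12)$ and $C^2 = -1$, then gives $\chi(\mcO_C(-m-1), \mcO_C(-m-1)) = -C^2 = 1$, independently of $m$.

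The crux is to identify the two cross terms with the ranks of $\mcW^\pm_i$. For this I would show that for $E' \in M^{m,m+1}(v)_i$ the groups $\Hom$ and $\Ext^2$ against $\mcO_C(-m-1)$ vanish in both directions, so that $\rk\mcW^-_i = \ext^1(E', \mcO_C(-m-1)) = -\chi(E', \mcO_C(-m-1))$ and $\rk\mcW^+_i = \ext^1(\mcO_C(-m-1), E') = -\chi(\mcO_C(-m-1), E')$. The $\Hom$-vanishings are exactly the stability conditions: $\Hom(E', \mcO_C(-m-1)) = \Hom(E'(-mC), \mcO_C(-1)) = 0$ is $m$-stability (1), while $\Hom(\mcO_C(-m-1), E') = 0$ holds because $E'$ is $(m+1)$-stable. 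The $\Ext^2$-vanishings I would obtain from Serre duality together with the restriction identities $\mcO(mC)|_C \cong \mcO_C(-m)$ and $\omega_{\hatP^2}|_C \cong \mcO_C(-1)$, which rewrite $\ext^2(E', \mcO_C(-m-1))$ as $\hom(\mcO_C, E'(-mC))$ and $\ext^2(\mcO_C(-m-1), E')$ as $\hom(E'(-(m+1)C), \mcO_C(-1))$; these vanish by $m$-stability (2) and $(m+1)$-stability (1) respectively. I expect this Serre-duality bookkeeping --- keeping the twists by $C$ and by $\omega_{\hatP^2}$ straight so that each $\Ext^2$ matches precisely one stability condition --- to be the only delicate point.

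Finally, substituting $\chi(\mcO_C(-m-1),\mcO_C(-m-1)) = 1$ and the two identifications into the expansion yields $\dim M^m(v) = \dim M^{m,m+1}(v)_i + i\,\rk\mcW^-_i + i\,\rk\mcW^+_i - i^2$, which is the first equality. For the second equality I would repeat the argument verbatim starting from the universal extension for $E^+$ in the companion lemma, which gives the same class $[E^+] = i[\mcO_C(-m-1)] + [E']$; the same numerical identity therefore holds, and reading it off against $\dim G^+_i = \dim M^{m,m+1}(v)_i + i(\rk\mcW^+_i - i)$ and $\rk(\mcS^+_i \otimes \pi^{+*}\mcW^-_i) = i\,\rk\mcW^-_i$ gives the claim for $M^{m+1}(v)$.
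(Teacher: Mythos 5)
Your proof is correct, and it reaches the paper's conclusion by a partly different route. The skeleton is shared: like the paper, you reduce via $\dim G^{\pm}_{i} = \dim M^{m,m+1}(v)_{i} + i(\rk \mcW^{\pm}_{i} - i)$ and $\rk(\mcS^{\pm}_{i} \otimes \pi^{\pm*}\mcW^{\mp}_{i}) = i \rk \mcW^{\mp}_{i}$, and you identify $\rk \mcW^{\pm}_{i}$ with $-\chi$ through exactly the stability-plus-Serre-duality bookkeeping that the paper invokes (your matching of each $\Hom$ and $\Ext^2$ vanishing with a specific stability condition is correct, and is the same computation the paper performs in Lemma \ref{lem:surj} and quotes here without detail). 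The difference is the final numerical step. The paper substitutes the explicit ADHM formula $\dim M^m(v) = -2r\ch_2 - k^2$ of Theorem \ref{thm:adhm}(1) for both $v$ and $v - ic_m$, and evaluates the cross terms by Serre duality and Riemann--Roch ($-\chi(\mcO_C(-m), E') = rm + k + i$), finishing by explicit cancellation of polynomial expressions. You instead write all three moduli dimensions uniformly as framed Euler characteristics $-\chi(E, E(-l_\infty))$ and let bilinearity of $\chi$ applied to $[E^{\pm}] = i[\mcO_C(-m-1)] + [E']$ do the cancellation, the only computation being $\chi(\mcO_C(-m-1), \mcO_C(-m-1)) = -C^2 = 1$; this is arguably cleaner, avoids writing out $v - ic_m$, and makes the second equality literally the same identity since $E^+$ has the same K-class as $E^-$. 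Two points should be tightened. First, upgrading $\dim M^m(v) = \ext^1(E^-, E^-(-l_\infty))$ to $-\chi(E^-, E^-(-l_\infty))$ requires $\Hom(E^-, E^-(-l_\infty)) = 0$ in addition to the $\Ext^2$-vanishing of Lemma \ref{lem:vanobstr}; this simplicity-type vanishing is standard for stable framed sheaves (stable quiver representations are simple, cf. \cite{ny11a, ny11b}) but is nowhere stated in the paper, whose own proof needs only the dimension formula as a black box, so you should cite it or prove it. Second, the cross term produced by the expansion is $\chi(\mcO_C(-m-1), E'(-l_\infty))$, and the relation $\mcO_C(-l_\infty) \cong \mcO_C$ only removes twists sitting on the $C$-supported side; to equate this term with $\chi(\mcO_C(-m-1), E')$, argue instead that $[E'(-l_\infty)] = [E'] - [E'|_{l_\infty}]$ in K-theory and that all Ext groups between $\mcO_C(-m-1)$ and $E'|_{l_\infty}$ vanish because their supports are disjoint.
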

\begin{proof}
We only prove the first equality. 
By the dimension formula for the framed moduli space 
in Theorem \ref{thm:adhm} (1), 
we have 
\begin{equation} \label{eq:dimMm}
\begin{aligned}
\dim M^m(v)
&=d_{\infty}(d_{0}+d_{1})-(d_{0}-d_{1})^2 \\
&=-2r\ch_2-k^2, 
\end{aligned}
\end{equation}
where the vector $(d_{0}, d_{1}, d_{\infty})$ 
is defined as in (\ref{eq:dim}). 
On the other hand, 
for any object $E' \in M^{m, m+1}(v)_{i}$, 
we have 
\begin{equation} \label{eq:dimG-}
\begin{aligned}
\dim G^-_{i} 
&= \dim M^{m, m+1}(v)_{i} 
+ i\left(\ext^1(E', \mcO_{C}(-m-1))-i \right) \\
&= \dim M^m(v-ic_{m}) 
-i\left(\chi(E', \mcO_{C}(-m-1))+i \right), 
\end{aligned}
\end{equation}
where the first equality follows from the fact that 
$\pi^- \colon G^-_i \to M^{m, m+1}(v)_{i}$ is a Grassmannian bundle 
defined as in (\ref{eq:grbdlpm}). 
For the second equality, 
first observe that 
we have 
$\chi(E', \mcO_{C}(-m-1))=-\ext^1(E', \mcO_{C}(-m-1))$ 
since $E'$ is $m$-stable and $(m+1)$-stable. 
Moreover, the inclusion 
\[
M^{m, m+1}(v)_{i}=M^m(v-ic_m) \cap M^{m+1}(v-ic_m) 
\subset M^m(v-ic_m) 
\]
is an open immersion 
and hence the second equality in (\ref{eq:dimG-}) holds. 
Similarly, by using the $m$-stability and $(m+1)$-stability of $E'$, 
we have 
\begin{equation} \label{eq:dimS-W+}
\begin{aligned}
\rk\left(
\mcS^-_{i} \otimes \pi^{-*}\mcW^+_{i} 
\right)
&=i\ext^1(\mcO_{C}(-m-1), E') \\
&=-i \cdot \chi(\mcO_{C}(-m-1), E') 
\end{aligned}
\end{equation}
(see (\ref{eq:mcWpm}) and (\ref{eq:tautogr}) 
for the definitions of $\mcW^+_i$ and $\mcS^-_i$).  
Combining the equalities 
(\ref{eq:dimMm}), (\ref{eq:dimG-}) and (\ref{eq:dimS-W+}), 
we obtain 
\begin{align*}
&\quad \dim G^-_i +\rk\left(
\mcS^-_{i} \otimes \pi^{-*}\mcW^+_{i} 
\right) \\
&=\dim M^m(v-ic_m)
-i\left(
i+\chi(E', \mcO_{C}(-m-1))
+\chi(\mcO_{C}(-m-1), E')
\right) \\
&=-2r\left(\ch_2+i\left(m+\frac{1}{2} \right) \right)-(k+i)^2 \\
&\quad -i^2-i\left(\chi(E', \mcO_{C}(-m-1))
+\chi(\mcO_{C}(-m-1), E') \right) \\
&=-2r\ch_2-k^2=\dim M^m(v), 
\end{align*}
where the second equality holds since we have 
\[
v-ic_m=v-i\ch(\mcO_C(-m-1))
=\left(r, -(k+i)C, \ch_2+i\left(m+\frac{1}{2} \right) \right), 
\]
and for the third equality, 
we use the Serre duality 
$\chi(E', \mcO_C(-m-1))=\chi(\mcO_C(-m), E')$, 
together with the Riemann-Roch theorem 
\[
-\chi(\mcO_C(-m), E')
=rm+k+i. 
\]
\end{proof}

Now we begin the proof of Theorem \ref{thm:normal}. 
\begin{proof}[Proof of Theorem \ref{thm:normal}]
We only prove the first assertion. 
By Lemma \ref{lem:natural} and Lemma \ref{lem:surj}, 
we have a surjective morphism 
\[
\delta \colon \mcT_{M^m(v)}|_{G^-_{i}} 
\to \mcS^-_{i} \otimes \pi^{-*}\mcW^+_{i}. 
\]
We need to show the isomorphism 
\[
\mcT_{G^-_{i}} \cong \ker(\delta). 
\]
Let the notations be as in Lemma \ref{lem:surj}. 
By Lemma \ref{lem:dimext}, 
the vector spaces 
$T_{E^-}G^-_{i}$ and $\ker(\delta_{E^-})$ 
are of the same dimension. 
Hence it is enough to show that the composition 
\[
T_{E^-}G^-_{i} \hookrightarrow T_{E^-}M^m(v) 
\xrightarrow{\delta_{E^-}} 
\Ext^1(V \otimes \mcO_{C}(-m-1), E') 
\]
is zero. 
Indeed, if this is the case, then 
we have $T_{E^-}G^-_{i}=\ker(\delta_{E^-})$, 
which induces a surjection 
$\mcT_{G^-_{i}} \twoheadrightarrow \ker(\delta)$ 
between torsion free sheaves of the same rank: 
it should be an isomorphism. 

We have the exact sequence 
\[
0 \to \Hom(V, W^-/V) \to T_{E^-}G^-_{i} \to T_{E'}M^{m, m+1}(v)_{i} \to 0. 
\]
We can see that the composition 
\[
\Hom(V, W^-/V) \to T_{E^-}G^-_{i} \to T_{E^-}M^m(v) 
\]
coincides with the morphism $\alpha_{1}$ 
in the exact sequence (\ref{eq:di}). 
In particular, it becomes zero after composing with $\delta_{1}$. 
Hence the morphism 
\[
T_{E^-}G^-_{i} \hookrightarrow T_{E^-}M^m(v) 
\xrightarrow{\delta} V \otimes W^+
\] 
factors through $T_{E'}M^{m, m+1}(v)_{i}$. 
Similarly, the morphism 
$T_{E'}M^{m, m+1}(v)_{i} \to V \otimes W^+$ 
coincides with $\delta \circ \alpha_{2}$, 
which is zero by the second exact sequence in (\ref{eq:di}). 
We conclude that 
$T_{E^-}G^-_{i}=\ker(\delta_{E^-})$ 
as required. 
\end{proof}

Now we have the following theorem: 
\begin{thm} \label{mmp}
Fix a Chern character of the form 
$v=(r, 0, \ch_{2}) \in H^{2*}(\hatP^2, \mathbb{Q})$. 
Then the diagram (\ref{eq:nyintro}) is a minimal model program 
for the moduli space $M_{\hatP^2}(v)$ 
of framed torsion free sheaves 
on the blow-up $\hatP^2$. 
The program ends with the minimal model, 
the moduli space $M_{\bP^2}(r, 0, \ch_{2})$ 
of framed torsion free sheaves on $\bP^2$, 
which is a hyper-K$\ddot{a}$hler manifold. 
\end{thm}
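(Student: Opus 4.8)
The plan is to run the minimal model program for $M_{\hatP^2}(v)$ by decreasing the stability parameter $m$. By Theorem \ref{thm:adhm} (3) we have $M^{m_0}(v) \cong M_{\hatP^2}(v)$ for $m_0 \gg 0$, and by Theorem \ref{thm:adhm} (2) together with the hypothesis $k=0$ we have $M^0(v) \cong M_{\bP^2}(r,0,\ch_2)$. I would therefore read the diagram (\ref{eq:nyintro}) as the sequence
\[
M_{\hatP^2}(v) = M^{m_0}(v) \dashrightarrow \cdots \dashrightarrow M^1(v) \dashrightarrow M^0(v) = M_{\bP^2}(r,0,\ch_2),
\]
and prove that each birational map $M^{m+1}(v) \dashrightarrow M^m(v)$, factored through $M^{m,m+1}(v)$ via $\xi_m^+$ and $\xi_m^-$, is a flip or a divisorial contraction in the sense of Definition \ref{def:contr}.

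The central step is a local analysis at each wall $\mcW_m$ and each Brill-Noether index $i$. By Theorem \ref{thm:bn} the fibers of $\xi_m^\pm$ over a point of $M^{m,m+1}(v)_i$ are $\Gr(i,W^+) \subset M^{m+1}(v)$ and $\Gr(i,W^-) \subset M^m(v)$, where $W^\pm$ are the fibers of $\mcW^\pm_i$, and Theorem \ref{thm:normal} identifies their normal bundles as $\mcS^+_i \otimes \pi^{+*}\mcW^-_i$ and $\mcS^-_i \otimes \pi^{-*}\mcW^+_i$. I would then apply the formal-neighborhood criterion \cite[Proposition 3.7]{aw97} to each such fiber, whose hypotheses I must check: the conormal bundle restricts on a fiber to $S^\vee \otimes (W^\mp)^\vee$, which is nef since $S^\vee$ is globally generated; and the required vanishings $H^1(\mcT_F \otimes \Sym^j \mcN^\vee) = H^1(\mcN \otimes \Sym^j \mcN^\vee) = 0$ follow, after decomposing $\Sym^j(S^\vee \otimes W^\mp)$ into Schur functors $K_\lambda(S^\vee)$ by the Cauchy formula, from the dominant-weight Bott vanishing of Theorem \ref{thm:Bott}, exactly as in Lemma \ref{lem:vanGrS}. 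This identifies each formal neighborhood with the total space of its normal bundle, hence with the local Grassmannian-flip model $Y^\pm \to Z$ of Subsection \ref{subsec:gflip}.

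With the local model in hand, the birational type is read off from Lemma \ref{lem:KofY}. Matching $M^{m+1}(v)$ with $Y^+$ and $M^m(v)$ with $Y^-$, the vector spaces $W^+, W^-$ of Subsection \ref{subsec:gflip} become exactly $\Ext^1(\mcO_C(-m-1), E')$ and $\Ext^1(E', \mcO_C(-m-1))$. A Riemann--Roch computation controls their dimensions: from the elementary modification relating $\mcO_C(-m-1)$ and $\mcO_C(-m)$ at a point of $C$, which changes the Euler pairing against $E'$ by $\ch_0(E') = r$, together with the $m$- and $(m+1)$-stability of $E'$, one obtains $\dim W^- = rm+i$ and $\dim W^+ = r(m+1)+i$, so that $\dim W^+ - \dim W^- = r > 0$, precisely the hypothesis of Lemma \ref{lem:KofY}. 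Hence $\xi_m^+$ is $K$-negative, $\xi_m^-$ is $K$-positive, and $\xi_m^-$ is the flip of $\xi_m^+$. Since $\dim W^- = rm+i$, the condition $i = \dim W^-$ of Lemma \ref{lem:KofY} (2) holds exactly when $m=0$; thus every wall-crossing with $m \geq 1$ is a flip, while at $m=0$ the morphism $\xi_0^+ \colon M^1(v) \to M^{0,1}(v) \cong M^0(v)$ is a divisorial contraction (the isomorphism using that $\xi_0^-$ is an isomorphism by Lemma \ref{lem:KofY} (2)). The remaining conditions of Definition \ref{def:k-neg/pos}, namely $\phi_*\mcO = \mcO$ and normality of $M^{m,m+1}(v)$, follow from the Grassmann-bundle structure of the fibers and the GIT construction.

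Finally, the program terminates at $M^0(v) \cong M_{\bP^2}(r,0,\ch_2)$. As a framed moduli space on $\bP^2$ this is a Nakajima quiver variety, hence holomorphic symplectic; in particular $K_{M_{\bP^2}}$ is trivial and therefore nef, so $M^0(v)$ is a minimal model and is hyper-K\"{a}hler. I expect the main obstacle to be the verification of the \cite{aw97} hypotheses uniformly in $m$ and $i$: reducing the symmetric-power vanishings to the dominant form of Theorem \ref{thm:Bott} requires the plethystic decomposition of $\Sym^j(S^\vee \otimes W^\mp)$ and the check that every weight appearing is dominant, and one must also ensure that the formal-neighborhood identification assembles compatibly over the stratified base $M^{m,m+1}(v)$, respecting the common open locus $M^m(v)_0 = M^{m+1}(v)_0$.
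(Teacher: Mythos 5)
Your ingredients and conclusions are the right ones, but the central mechanism of your argument is circular as you have ordered it. You propose to establish the birational type of $\xi_m^{\pm}$ by first identifying formal neighborhoods of the fibers with the local model $Y^{\pm}\to Z$ via the Andreatta--Wi\'{s}niewski criterion (\cite[Proposition 3.7]{aw97}, quoted as a lemma in Section \ref{sec:prel}), and only then reading off $K$-negativity/positivity from Lemma \ref{lem:KofY}. But that criterion takes as a \emph{hypothesis} that the morphism is already a $K$-negative contraction, so it cannot be the source of the $K$-negativity of $\xi_m^{+}$; and it does not apply at all to the $K$-positive side $\xi_m^{-}$ (in the paper the $\xi^{-}$ side of the formal-local picture is recovered from uniqueness of flips, \cite[Corollary 6.4]{km98}, not from a second application of \cite{aw97}). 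The paper's own proof of Theorem \ref{mmp} avoids formal neighborhoods entirely: relative (anti-)ampleness of $K$ can be tested on fibers, and by adjunction together with Theorem \ref{thm:normal} the restriction of $\omega_{M^{m+1}(v)}$ to a fiber $G^+(E')=\Gr(i,W^+)$ is $\omega_{G^+(E')}\otimes\det\bigl(\mcN^{\vee}_{G^+(E')/M^{m+1}(v)}\bigr)\cong\det(S^+)^{\otimes(\dim W^+-\dim W^-)}$, which is exactly the computation recorded in Lemma \ref{lem:KofY} (1); everything then reduces to $\dim W^+-\dim W^-=r>0$, i.e.\ the Riemann--Roch identity (\ref{eq:difference}) that you also derive. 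Once this direct fiberwise argument is in place, your formal-local step becomes both legitimate (its hypothesis is now verified) and unnecessary for the MMP statement; the paper reserves that machinery for the derived-category embedding, Theorem \ref{derived}, where it is genuinely needed.

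A second, smaller inaccuracy: in checking the vanishing hypotheses of \cite{aw97} you assert that after the Cauchy decomposition $\Sym^j(S^{\vee}\otimes W^{\mp})\cong\bigoplus_{\lambda}K_{\lambda}(S^{\vee})\otimes K_{\lambda}\bigl((W^{\mp})^{\vee}\bigr)$ every weight appearing is dominant. That fails for the summands of $\mcN\otimes\Sym^j(\mcN^{\vee})$: since the normal bundle restricts to $S\otimes W^{\mp}$, one must control $H^1\bigl(G,S\otimes K_{\lambda}(S^{\vee})\bigr)$, and the Littlewood--Richardson/Pieri rule produces summands $K_{\mu}(S^{\vee})$ with last entry $\mu_i=-1$, whose concatenation with the zero weight on $Q^{\vee}$ is not dominant. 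Theorem \ref{thm:Bott} as stated does not cover these; one needs the singular case of Bott's theorem ($\alpha+\rho$ has a repeated entry, hence all cohomology vanishes), which is also what is implicitly behind Lemma \ref{lem:vanGrS} (2). The rest of your outline --- the dimension formulas $\dim W^-=rm+i$ and $\dim W^+=r(m+1)+i$, the resulting dichotomy (divisorial contraction exactly at $m=0$, flips for $m\geq 1$, matching the proposition following Theorem \ref{mmp} in the paper), the identification of the endpoint with $M_{\bP^2}(r,0,\ch_2)$ via Theorem \ref{thm:adhm}, and the quiver-variety argument for the hyper-K\"{a}hler property --- is correct and agrees with the paper.
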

\begin{proof}
We claim that for each $m \in \mathbb{Z}_{\geq 0}$, 
the morphism $\xi_{m}^+$ 
(resp. $\xi_{m}^-$) 
is a $K$-negative (resp. $K$-positive) contraction 
(cf. Definition \ref{def:k-neg/pos}). 
By Lemma \ref{lem:KofY} and 
Theorem \ref{thm:normal}, 
it is enough to show the inequality 
\[
\rk \mcW^+_{i} > \rk \mcW^-_{i} 
\]
(see (\ref{eq:mcWpm}) for the difinitions of $\mcW^\pm_i$), 
which is equivalent to the inequality 
\[
\ext^1(\mcO_{C}(-m-1), E') > 
\ext^1(E', \mcO_{C}(-m-1)). 
\]
Now the assertion directly follows 
from the Riemann-Roch theorem. 
Explicitly, we have 
\begin{equation} \label{eq:difference}
\begin{aligned}
&\quad \ext^1(\mcO_{C}(-m-1), E') 
-\ext^1(E', \mcO_{C}(-m-1)) \\
&=\chi(E', \mcO_{C}(-m-1)) 
-\chi(\mcO_{C}(-m-1), E') \\
&=r. 
\end{aligned}
\end{equation}

To see that the moduli space 
$M_{\bP^2}(r, 0, \ch_2)$ is hyper-K{\"a}hler, 
recall that the space $M_{\bP^2}(r, 0, \ch_2)$ is 
isomorphic to Nakajima's quiver variety associated with 
the quiver with one vertex and one loop \cite[Chapter 2]{nak99}. 
By the general fact that 
Nakajima's quiver varieties are hyper-K{\"a}hler \cite{nak94, gin09}, 
so is the variety $M_{\bP^2}(r, 0, \ch_2)$. 
\end{proof}

From the arguments above, 
we can also deduce the following result: 
\begin{prop}
Let us take a Chern character 
$v=(r, 0, \ch_2) \in H^{2*}(\hatP^2, \mathbb{Q})$. 
The following statements hold. 
\begin{enumerate}
\item For every integer $m \geq 1$, 
the morphism $\xi_{m}^{\pm}$ is a small contraction. 
\item The morphism $\xi_{0}^{+}$ is 
a divisorial contraction. 
\end{enumerate}
\end{prop}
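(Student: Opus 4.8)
The plan is to reduce the statement to a codimension computation for the exceptional loci of $\xi_m^{\pm}$, using the normal bundle description from Theorem \ref{thm:normal} together with the dimension bookkeeping already carried out in Lemma \ref{lem:KofY}. First I would recall that, by Theorem \ref{thm:bn} and the identifications in (\ref{eq:grbdlpm}), the fibers of $\xi_m^{\pm}$ over a point of the stratum $M^{m,m+1}(v)_i$ are Grassmannians $\Gr(i,\mcW^{\pm}_i)$, and that near such a fiber the local model is precisely the Grassmannian flip of Subsection \ref{subsec:gflip} with $W^+ = \Ext^1(\mcO_C(-m-1),E')$ and $W^- = \Ext^1(E',\mcO_C(-m-1))$. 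The computation (\ref{eq:difference}) in the proof of Theorem \ref{mmp} gives $\dim W^+ - \dim W^- = r = \rk(v)$, so that $\dim W^+ > \dim W^-$ holds whenever $r > 0$ and the two loci are genuinely exceptional.

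Next I would read off the codimension of the exceptional locus directly from Lemma \ref{lem:KofY}. In the notation there, the contraction $\phi^+ \colon Y^+ \to Z$ has exceptional locus of codimension $\dim W^- + 1 - i$ (this is the quantity $\dim Y^{\pm} - \dim \Exc(\phi^{\pm})$ computed at the end of that proof), and $\phi^+$ is divisorial exactly when $i = \dim W^-$ and flipping (small) when $i < \dim W^-$. The task therefore becomes identifying, for each $m$, which value of $i$ governs the top Brill--Noether stratum, i.e.\ the largest $i$ for which $M^{m,m+1}(v)_i$ is nonempty and contributes an exceptional locus of codimension one or less. For $m \geq 1$ I would argue, using the Chern-character shift $v - ic_m = (r,-(k+i)C,\ch_2 + i(m+\tfrac12))$ recorded in the proof of Lemma \ref{lem:dimext} together with the nonemptiness constraints from Theorem \ref{thm:adhm}, that along the whole diagram the relevant subspace $W^-$ always has dimension strictly greater than $i$, forcing $i < \dim W^-$ and hence a small contraction on both sides.

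For the divisorial case I would treat $m = 0$ separately. Here the relevant stratum is indexed so that $i = \dim W^-$, making $\phi^+$ a divisorial contraction by Lemma \ref{lem:KofY}(2); concretely, this corresponds to the transition $M^0(v) \to M_{\bP^2}(r,0,\ch_2)$ where the extra $(-kC)$ part of the Chern character is contracted. Since $v = (r,0,\ch_2)$ is chosen with $k = 0$, the morphism $\xi_0^+$ contracts the divisor parametrizing sheaves with a nontrivial $\mcO_C(-1)$-summand, and the codimension formula $\dim W^- + 1 - i = 1$ confirms it is divisorial. I would verify that the value $i = \dim W^-$ is indeed attained on the top stratum at $m=0$ by a direct Riemann--Roch estimate of $\ext^1(E',\mcO_C(-1))$ for $E'$ in the boundary.

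The main obstacle I anticipate is the bookkeeping needed to pin down, for each $m$, exactly which Brill--Noether index $i$ produces the highest-dimensional (and hence codimension-controlling) exceptional stratum, and to show uniformly for all $m \geq 1$ that this index stays strictly below $\dim W^-$. The normal-bundle isomorphisms of Theorem \ref{thm:normal} give the local flip structure stratum-by-stratum, but assembling these into a single statement about $\Exc(\xi_m^{\pm})$ requires checking that the codimension estimate from Lemma \ref{lem:KofY}, applied fiberwise, is not accidentally violated by a lower stratum with a larger base dimension $M^{m,m+1}(v)_i$. I expect this to reduce to the monotonicity of $\dim W^{\pm}$ in $i$ already implicit in (\ref{eq:difference}), but it is the step where the global geometry, rather than the purely local Grassmannian-flip model, genuinely enters.
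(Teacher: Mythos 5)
Your numerical inputs are exactly the ones the paper uses: $\dim W^+ - \dim W^- = r$ from (\ref{eq:difference}), and the Riemann--Roch/Serre duality computation (carried out inside the proof of Lemma \ref{lem:dimext}) giving $\dim W^- = \ext^1(E', \mcO_C(-m-1)) = rm + i$ when $v=(r,0,\ch_2)$, so that $i < \dim W^-$ for $m \geq 1$ and $i = \dim W^-$ for $m=0$. That dichotomy, read against Lemma \ref{lem:KofY}, is the correct criterion, and your monotonicity remark at the end resolves the stratum-versus-stratum bookkeeping in the right way.

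However, the route you take to apply Lemma \ref{lem:KofY} has a genuine ordering gap: you assert that ``near such a fiber the local model is precisely the Grassmannian flip of Subsection \ref{subsec:gflip}'' and then read off $\codim \Exc(\xi_m^+)$ as $\dim W^- + 1 - i$ from that model. At this stage of the paper, Theorem \ref{thm:normal} only identifies the fibers $G^{\pm}(E')$ and their normal bundles; it does not identify the morphisms $\xi_m^{\pm}$ with $\phi^{\pm}$ on a formal neighborhood, and the exceptional locus is defined through the morphism, not through the normal bundle. The full formal-local identification of the diagram with the Grassmannian flip is established only later, in the proof of Theorem \ref{derived}, and that argument invokes uniqueness of flips, i.e.\ it presupposes that $\xi_m^{\pm}$ are small contractions with the appropriate $K$-signs --- the very content of the present proposition. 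The paper's proof avoids the local model entirely: since $\Exc(\xi_m^{\pm}) = \bigcup_{i \geq 1} G^{\pm}_i$, it suffices to compute, via Lemma \ref{lem:dimext},
\begin{equation*}
\codim\left(G^+_i, M^{m+1}(v)\right) = \rk\left(\mcS^+_i \otimes \pi^{+*}\mcW^-_i\right) = i\,\ext^1(E', \mcO_C(-m-1)) = rim + i^2,
\end{equation*}
which is at least $2$ for every $i \geq 1$ when $m \geq 1$, and equals $1$ exactly for $(m,i)=(0,1)$; combined with $\dim G^+_i > \dim G^-_i$ (again from (\ref{eq:difference})) this settles both sides at once. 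Your proposal becomes correct if you replace the appeal to the local flip model by this direct stratum codimension count --- which is essentially what your final paragraph gestures at. One further small slip: $\xi_0^+$ is the morphism $M^1(v) \to M^{0,1}(v)$, not the transition $M^0(v) \to M_{\bP^2}(r,0,\ch_2)$ (the latter is an isomorphism when $k=0$ by Theorem \ref{thm:adhm}); the contracted divisor is the closure of the stratum of sheaves admitting a nonzero map onto $\mcO_C(-1)$, not of sheaves with an $\mcO_C(-1)$ direct summand.
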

\begin{proof}
By Lemma \ref{lem:dimext} 
and the equation (\ref{eq:difference}), 
we have the inequality 
$\dim G^+_{i} > \dim G^-_{i}$. 
Hence it is enough to estimate the dimension of $G^+_{i}$. 
By Lemma \ref{lem:dimext}, 
we have 
\begin{align*}
\codim (G^+_{i}, M^{m+1}(v) ) 
&=\rk\left(
\mcS^+_{i} \otimes \pi^{+*}\mcW^-_{i} 
\right) \\
&=i\ext^1(E', \mcO_C(-m-1))
=rim + i^2. 
\end{align*}

It follows that for each $m \geq 1$, 
the morphism $\xi_{m}^{\pm}$ 
is a small contraction, 
while $\xi_{0}^+$ is a divisorial contraction. 
\end{proof}
\subsection{Fully faithful embedding between derived categories}
As an application of Theorem \ref{thm:normal}, 
we show the existence of the fully faithful embedding 
between the derived categories 
of the moduli spaces $M^m(v), M^{m+1}(v)$. 
Let $W^{m}:=M^m(v) \times_{M^{m, m+1}(v)} M^{m+1}(v)$ 
be the fiber product. 
\begin{thm} \label{derived}
Let us fix a Chern character 
$v=(r, 0, \ch_{2}) \in H^{2*}(\hatP^2, \mathbb{Q})$. 
Then for each integer $m \in \mathbb{Z}_{\geq 0}$, 
we have the fully faithful functor 
\[
\Phi=\Phi^{\mcO_{W^m}} \colon D^b(M^m(v)) \hookrightarrow D^b(M^{m+1}(v))
\]
whose Fourier-Mukai kernel is $\mcO_{W^m}$. 
In particular, we have a fully faithful embedding 
\[
D^b(M_{\bP^2}(r, 0, \ch_{2})) \hookrightarrow 
D^b(M_{\hatP^2}(r, 0, \ch_{2})). 
\]
\end{thm}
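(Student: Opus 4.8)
The plan is to reduce the global full faithfulness to a formal-local statement and then to invoke the local result \cite[Theorem D]{blvdb16} recorded in Theorem \ref{thm:blvdb}. Put $U := M^{m,m+1}(v)$ and regard $M^m(v), M^{m+1}(v)$ as projective over $U$ through the contractions $\xi_m^\pm$ of (\ref{eq:nyintro}). Since $W^m = M^m(v) \times_U M^{m+1}(v)$, the kernel $\mcO_{W^m}$ is supported on this fibre product, so Lemma \ref{lem:formal} applies with $X = M^m(v)$, $Y = M^{m+1}(v)$, $\mcP = \mcO_{W^m}$: the functor $\Phi^{\mcO_{W^m}}$ is fully faithful if and only if the induced functor with kernel $\mcO_{W^m} \otimes \mcO_{\hat{X}_p \times \hat{Y}_p}$ is fully faithful for every $p \in U$, where $\hat{X}_p, \hat{Y}_p$ are the base changes along $\hat{U}_p \to U$. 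Thus it suffices to work one point at a time.

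Fix $p = [E'] \in U$ in the stratum $M^{m,m+1}(v)_i$ and set $W^- := \Ext^1(E', \mcO_C(-m-1))$, $W^+ := \Ext^1(\mcO_C(-m-1), E')$. By Theorem \ref{thm:bn} the fibres of $\xi_m^-$ and $\xi_m^+$ over $p$ are $\Gr(i, W^-)$ and $\Gr(i, W^+)$, the Grassmannian fibres of the local flip $Y^- \to Z \leftarrow Y^+$ of Section \ref{subsec:gflip} attached to $(W^-, W^+)$; moreover (\ref{eq:difference}) gives $\dim W^+ - \dim W^- = r > 0$, putting us in the regime required by Theorem \ref{thm:blvdb}. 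I would then identify $\hat{Y}_p$ with the formal neighbourhood of the zero section of $Y^+$. On this $K$-negative side $M^{m+1}(v)$, Theorem \ref{thm:normal} computes the normal bundle of the fibre as $\mcS^+ \otimes W^-$, whose total space is $Y^+$; its conormal bundle $\mcS^{+\vee} \otimes (W^-)^\vee$ is nef because $\mcS^{+\vee}$ is globally generated, and the vanishings $H^1(\mcT \otimes \Sym^i \mcN^\vee) = H^1(\mcN \otimes \Sym^i \mcN^\vee) = 0$ demanded by \cite[Proposition 3.7]{aw97} follow, after decomposing $\Sym^i \mcN^\vee$ into Schur functors of $\mcS^{+\vee}$, from the Bott-type vanishing of Lemma \ref{lem:vanGrS} and Theorem \ref{thm:Bott}. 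Hence \cite[Proposition 3.7]{aw97} yields $\hat{Y}_p \cong \widehat{Y^+}$ along the fibre.

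For the $K$-positive side I would not apply \cite{aw97} directly, since $\xi_m^-$ is not $K$-negative. Instead, for $m \geq 1$ the map $\xi_m^-$ is the flip of the flipping contraction $\xi_m^+$ over $U$, and flips are unique, so $\hat{X}_p$ is forced to be the flip of $\hat{Y}_p \cong \widehat{Y^+}$, namely $\widehat{Y^-}$, compatibly over $\hat{U}_p$; the divisorial case (where, by Lemma \ref{lem:KofY}(2) with $i = \dim W^-$, the map $Y^- \to Z$ is an isomorphism so that $\hat{X}_p \cong \hat{U}_p$) is handled likewise. This compatibility over the base is exactly what is needed to match kernels: since both identifications sit over the canonical affinization $\hat{U}_p \cong \hat{Z}_0$ (times the smooth factor from deformations of $E'$ in the stratum, which is smooth by the vanishing $\Ext^2(E', E'(-l_\infty)) = 0$ of Lemma \ref{lem:vanobstr}), the fibre product $M^m(v) \times_U M^{m+1}(v)$ is carried to $Y^- \times_Z Y^+ = W$, and $\mcO_{W^m} \otimes \mcO_{\hat{X}_p \times \hat{Y}_p}$ becomes $\mcO_W$. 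Theorem \ref{thm:blvdb} then gives that each local functor $\Phi^{\mcO_W}$ is fully faithful, and Lemma \ref{lem:formal} upgrades this to the full faithfulness of $\Phi^{\mcO_{W^m}}$. Composing these embeddings over the relevant range of $m$ and using the identifications $M^0(v) \cong M_{\bP^2}(r,0,\ch_2)$ and $M^{m_0}(v) \cong M_{\hatP^2}(v)$ from Theorem \ref{thm:adhm}(2),(3) (here $k=0$) produces the final embedding $D^b(M_{\bP^2}) \hookrightarrow D^b(M_{\hatP^2})$.

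The hardest step is the kernel matching of the third paragraph. The formal-neighbourhood theorem produces an identification of abstract formal schemes, and turning it into one over $\hat{U}_p$ that is simultaneously compatible on both sides — so that the fibre-product kernel $\mcO_{W^m}$ is carried to $\mcO_W$ — requires either a relative refinement of \cite[Proposition 3.7]{aw97} or the uniqueness-of-flips argument sketched above. Everything else is a direct application of the cited results or a Bott-vanishing computation.
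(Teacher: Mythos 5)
Your proposal is correct and follows essentially the same route as the paper's own proof: reduction to formal completions via Lemma \ref{lem:formal}, the Andreatta--Wi\'sniewski theorem applied to the $K$-negative side $G^+(E') \subset M^{m+1}(v)$ using Theorem \ref{thm:normal}, nefness of the conormal bundle, and the Bott-type vanishing of Lemma \ref{lem:vanGrS}, then uniqueness of the flip (\cite[Corollary 6.4]{km98}) to pin down the other side, and finally Theorem \ref{thm:blvdb} for the local full faithfulness. Your additional remarks --- the explicit treatment of the divisorial case $m=0$ via Lemma \ref{lem:KofY}(2), the decomposition into general Schur functors of $\mcS^{+\vee}$ (not just symmetric powers), and the concern about matching the kernel $\mcO_{W^m}$ with $\mcO_W$ over the base --- only spell out points the paper passes over quickly.
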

\begin{proof}
By Lemma \ref{lem:formal}, 
we can reduce the statement to the formal completion 
at a point $[E'] \in M^{m, m+1}(v)$. 

We claim that the diagram (\ref{eq:nyintro}) 
is formally locally isomorphic to the Grassmannian flip 
appered in Section \ref{subsec:gflip}, 
by using Lemma \ref{lem:formal}. 
Let us take an object 
$[E'] \in M^{m, m+1}(v)_{i}$ 
and put 
$U:=\Ext^1(E'(-l_{\infty}), E')$, 
$W^-:=\Ext^1(E', \mcO_{C}(-m-1))$, 
$W^+:=\Ext^1(\mcO_{C}(-m-1), E')$. 
Recall that the fibers of $\xi^{\pm}_{m}$ 
are the Grassmannian varieties 
\[
(\xi^{\pm}_{m})^{-1}([E'])=G^{\pm}(E'):=\Gr(i, W^{\pm}). 
\]
By Theorem \ref{thm:normal}, 
their normal bundles are given as 
\begin{align*}
&N_{G^-(E')/M^m(v)}=S^- \otimes W^+ \oplus \mcO^{\oplus \dim U}, \\
&N_{G^+(E')/M^{m+1}(v)}=S^+ \otimes W^- \oplus \mcO^{\oplus \dim U}. 
\end{align*}
Here, $S^{\pm}$ denotes the tautological subbundles 
on $G^{\pm}(E')$. 
First note that the conormal bundle 
$N^\vee_{G^+(E')/M^{m+1}(v)}$ is nef 
as it is globally generated. 
Combining with Lemma \ref{lem:vanGrS}, 
we can apply Lemma \ref{lem:formal} to 
$G^+(E) \subset M^{m+1}(v)$. 
Moreover, its flip is unique by \cite[Corollary 6.4]{km98}. 
We conclude that 
the formal completion of the diagram (\ref{eq:nyintro}) 
is isomorphic to the formal completion of the diagram 
\begin{equation} \label{eq:determ}
\xymatrix{
&Y^- \times U \ar[rd] 
& 
&Y^+ \times U \ar[ld] \\
& &Z \times U &
}
\end{equation}
at a point $(0, 0) \in Z \times U$. 
Here, varieties $Y^{\pm}$ and $Z$ are defined as 
\begin{align*}
&Y^-:=\Tot_{G^-(E')}(S^- \otimes W^+), \\
&Y^+:=\Tot_{G^+(E')}(S^+ \otimes W^-), \\
&Z:=\left\{
a \in \Hom((W^-)^\vee, W^+) : 
\rk a \leq i
\right\} 
\end{align*}
(see Section \ref{subsec:gflip}). 
By Theorem \ref{thm:blvdb}, we have the fully faithful functor 
\[
\Phi^{loc} \colon D^b(Y^-) \hookrightarrow D^b(Y^+)
\]
whose Fourier-Mukai kernel is 
the structure sheaf of the fiber product 
$Y^- \times_{Z} Y^+$. 
Hence globally, the functor 
$\Phi \colon D^b(M^m(v)) \to D^b(M^{m+1}(v))$ 
is fully faithful. 
\end{proof}

\subsection{Projective case}
Let $S$ be a smooth projective surface, 
$f \colon \hat{S} \to S$ be the blow-up at a point. 
Let $H$ be an ample divisor on $S$. 
In this setting, 
we can consider the $m$-stability 
for coherent sheaves $E$ on $\hat{S}$ 
(cf. \cite{ny11b}), 
by replacing the condition (3) in Definition \ref{defin:m-st} 
with 
\begin{enumerate}
\item[(3)'] $f_{*}(E(-mC))$ is $\mu_{H}$-stable. 
\end{enumerate}

Let us fix a cohomology class 
$w=(w_{0}, w_{1}, w_{2}) \in H^{2*}(S, \mathbb{Q})$ 
which is in the image of the Chern character map, 
and $v:=f^*w \in H^{2*}(\hat{S}, \mathbb{Q})$. 
Assume the following conditions hold: 
\begin{itemize}
\item $K_{S}.H <0$, 
\item $w_{0} >0$, 
\item $\gcd(w_{0}, H.w_{1})=1$. 
\end{itemize} 
Then by \cite[Corollary 3.7]{ny11b}, 
the moduli space $M^m(v)$ of $m$-stable sheaves 
with Chern character $v$ is smooth. 
Moreover, the analogous results as 
Theorem \ref{thm:adhm} and Theorem \ref{thm:bn} hold. 
More precisely, the moduli spaces $M^m(v)$ 
satisfy the following properties: 
\begin{itemize}
\item $M^m(v)$ has the Brill-Noether stratification as in Section \ref{subsec:bn} 
by \cite[Propositions 3.31 and 3.32]{ny11b}.  
\item For $m=0$, we have an isomorphism 
$f_* \colon M^0(v) \xrightarrow{\cong} M_H(w)$, 
where $M_H(v)$ denotes 
the moduli space of Gieseker stable sheaves on $S$ 
with respect to the polarization $H$, 
by \cite[Proposition 3.3]{ny11b}. 
\item For $m \gg 0$, we have an isomorphism 
$M^m(v) \cong M_{f^*H-\epsilon C}(v)$, 
where $M_{f^*H-\epsilon C}(v)$ denotes 
the moduli space of Gieseker stable sheaves on $\hat{S}$ 
with respect to the polarization $f^*H-\epsilon C$ 
for sufficiently small $\epsilon >0$, 
by \cite[Proposition 3.37]{ny11b}. 
\end{itemize}

In the projective setting, 
we can also prove the results 
in the previous subsections 
by using the properties listed above. 
We omit the proof, 
since the arguments are very similar. 

\begin{thm} \label{thm:proj}
Let the notations be as above. 
Take a sufficiently small positive real number 
$0 <\epsilon \ll 1$. 
Then an MMP for the moduli space $M_{f^*H-\epsilon C}(v)$ 
of Gieseker stable sheaves on $\hat{S}$ 
is reduced to an MMP for the moduli $M_{H}(w)$ on $S$. 
Furthermore, there exists a fully faithful embedding 
\[
D^b(M_{H}(w))
\hookrightarrow 
D^b(M_{f^*H-\epsilon C}(v))
\] 
between their derived categories. 
\end{thm}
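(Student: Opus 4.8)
```latex
The plan is to mirror, in the projective setting, the entire chain of arguments developed in Sections~\ref{sec:bir} for the framed case on $\hatP^2$, replacing each input with its projective analogue listed above. The authors have already emphasized that the moduli spaces $M^m(v)$ here carry the same structural features: smoothness (via \cite[Corollary 3.7]{ny11b}), a Brill-Noether stratification $M^m(v)_i, M^{m+1}(v)_i$ with Grassmann-bundle structure over the common stratum $M^{m,m+1}(v)_i$ (via \cite[Propositions 3.31, 3.32]{ny11b}), and the identifications $M^0(v) \cong M_H(w)$ at the bottom and $M^m(v) \cong M_{f^*H-\epsilon C}(v)$ for $m \gg 0$ at the top. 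So the first task is simply to record that the diagram (\ref{eq:nyintro}) exists verbatim in this setting, connecting $M_{f^*H-\epsilon C}(v)$ down to $M_H(w)$ through a finite sequence of wall-crossings $\xi_m^\pm$.

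Next I would reprove the normal bundle computation, the analogue of Theorem~\ref{thm:normal}: over each stratum the normal bundles of $G^\pm_i$ are $\mcS^\pm_i \otimes \pi^{\pm *}\mcW^\mp_i$, where $\mcW^\pm_i$ are the relevant $\mcE xt^1_p$-sheaves. The three supporting lemmas (Lemma~\ref{lem:natural} constructing the morphism $\delta$, Lemma~\ref{lem:surj} checking surjectivity fiberwise via the two exact sequences in (\ref{eq:di}), and the dimension count of Lemma~\ref{lem:dimext}) all go through once one has the deformation-obstruction theory for $m$-stable sheaves and the vanishing of the obstruction space $\Ext^2$. Here the key numerical input is the Riemann-Roch computation: the sign of $\chi(E',\mcO_C(-m-1)) - \chi(\mcO_C(-m-1),E')$ must again force $\rk \mcW^+_i > \rk \mcW^-_i$, so that $\xi^-_m$ is $K$-positive and $\xi^+_m$ is $K$-negative by Lemma~\ref{lem:KofY}. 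This establishes that the diagram realizes an MMP reducing $M_{f^*H-\epsilon C}(v)$ to $M_H(w)$, which is the first assertion of the theorem.

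For the derived-category statement I would follow the proof of Theorem~\ref{derived} exactly. Using Lemma~\ref{lem:formal} (the Fourier-Mukai reduction lemma), fullness and faithfulness of $\Phi^{\mcO_{W^m}}$ is checked after formal completion at each point $[E'] \in M^{m,m+1}(v)$. The Artin-Wisniewski formal-neighbourhood lemma (Lemma~\ref{lem:formal}, the birational-geometry version), combined with the nefness of the conormal bundle and the Bott-type vanishing of Lemma~\ref{lem:vanGrS}, identifies the formal completion of the diagram with that of the Grassmannian flip $Y^- \times U \dashrightarrow Y^+ \times U$ from Section~\ref{subsec:gflip}. Then Theorem~\ref{thm:blvdb} supplies the local fully faithful embedding $D^b(Y^-) \hookrightarrow D^b(Y^+)$, which globalizes to the desired embedding $D^b(M_H(w)) \hookrightarrow D^b(M_{f^*H-\epsilon C}(v))$.

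The step I expect to be the genuine obstacle is the projective analogue of the obstruction-vanishing Lemma~\ref{lem:vanobstr}, namely $\Ext^2(E', E'(-l_\infty))=0$, since in the framed $\hatP^2$ case the proof used the framing $G|_{l_\infty} \cong \mcO_{\bP^2}^{\oplus \ch_0}$ at the line at infinity to kill the relevant $\Hom$-group, and no such framing is available on a general projective surface $S$. The replacement must instead exploit the hypothesis $K_S.H < 0$: by Serre duality one rewrites the obstruction as $\Hom(E', E' \otimes \omega_{\hat{S}})^\vee$, and $\mu_H$-stability of $f_*(E'(-mC))$ together with the negativity of $K_S.H$ (so that tensoring by $\omega$ strictly decreases slope) should force this group to vanish. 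Getting this slope inequality to interact correctly with the blow-up $f$ and the exceptional curve $C$ — i.e.\ controlling $\Hom_{\hat{S}}$ by $\Hom_S$ on the double dual $(f_*E')^{\vee\vee}$ as in the proof of Lemma~\ref{lem:vanobstr} — is where the argument is least automatic, and is the only place the numerical condition $K_S.H<0$ is really used.
```
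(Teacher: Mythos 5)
Your proposal follows essentially the same route as the paper, which itself omits the proof of Theorem \ref{thm:proj} with the remark that "the arguments are very similar" to the framed case, relying on exactly the inputs you list from \cite{ny11b} (smoothness via Corollary 3.7, the Brill-Noether structure via Propositions 3.31--3.32, and the boundary identifications $M^0(v)\cong M_H(w)$, $M^m(v)\cong M_{f^*H-\epsilon C}(v)$ for $m\gg 0$). Your additional observation that the hypothesis $K_S.H<0$ enters precisely through the obstruction vanishing $\Ext^2(E',E')\cong\Hom(E',E'\otimes\omega_{\hat{S}})^\vee=0$ (the untwisted analogue of Lemma \ref{lem:vanobstr}, with no $l_\infty$-twist in the projective setting) is correct and is exactly what underlies the cited smoothness result.
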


\section{Examples} \label{sec:ex}
In this section, we give some explicit examples. 

\subsection{Simplest example}
As the first example, we consider the case when 
the Chern character is $(r, 0, -1)$ with $r \geq 1$. 
In this case, the corresponding quiver representation is 
\begin{equation} \label{eq:rep11r}
\xymatrix{
&\bC \ar@<-0.8ex>[rr]_-d & 
&\bC \ar@<-1.0ex>[ll]_{B_{1}, B_{2}} \ar@<-0.1ex>[ll] \ar@<+0.2ex>[ld]^-j \\
& &\bC^r \ar@<+0.2ex>[lu]^-i. & 
}
\end{equation}

We begin with the following easy observation: 
\begin{lem}
The moduli spaces 
$M^m(r, 0, -1)$ 
and $M^{m+1}(r, 0, -1)$ 
are isomorphic for $m \geq 1$. 
\end{lem}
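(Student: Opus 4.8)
The plan is to show that for $m \geq 1$ the wall $\mathcal{W}_m$ is crossed trivially, in the sense that all Brill-Noether strata with index $i \geq 1$ are empty; then the common open stratum $M^m(v) \cap M^{m+1}(v)$ exhausts both moduli spaces and the isomorphism is immediate. First I would record the dimension vector attached to $v = (r, 0, -1)$ via (\ref{eq:dim}): the vanishing of the $C$-coefficient gives $k = d_0 - d_1 = 0$, while $\ch_2 = -1$ gives $d_0 + d_1 = 2$, so that $\vec{d} = (1, 1, r)$, in agreement with the representation (\ref{eq:rep11r}).

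The heart of the argument is a non-negativity constraint on the dimension vectors of the shifted characters. The strata are $M^{m, m+1}(v)_i = M^m(v - i c_m) \cap M^{m+1}(v - i c_m)$ with $c_m = \ch(\mcO_C(-m-1))$. Using $v - i c_m = (r, -iC, -1 + i(m + \tfrac{1}{2}))$, as computed in the proof of Lemma \ref{lem:dimext}, the formula (\ref{eq:dim}) produces the dimension vector $(1 - im,\, 1 - i(m+1),\, r)$. For $m \geq 1$ and $i \geq 1$ the middle entry satisfies $1 - i(m+1) \leq 1 - 2 < 0$. Since the entries of a dimension vector are dimensions of vector spaces, no $(Q, I)$-representation of this dimension vector exists, so $M^{m, m+1}(v)_i = \emptyset$ for every $i \geq 1$.

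It remains to deduce the isomorphism. By Theorem \ref{thm:bn} the strata $M^m(v)_i$ and $M^{m+1}(v)_i$ are Grassmannian bundles over $M^{m, m+1}(v)_i$, hence are empty for $i \geq 1$ as well. Therefore the decompositions into Brill-Noether strata collapse onto their open parts, giving $M^m(v) = M^m(v)_0$ and $M^{m+1}(v) = M^{m+1}(v)_0$. But by the identification recalled in Section \ref{subsec:bn} one has $M^m(v)_0 = M^m(v) \cap M^{m+1}(v) = M^{m+1}(v)_0$, and so $M^m(v) \cong M^{m+1}(v)$.

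The computation is elementary, so I do not anticipate a genuine obstacle; the only delicate point is pinning down the sign conventions in $c_m$ and in (\ref{eq:dim}), so as to confirm that the middle entry becomes negative exactly for $m \geq 1$ while staying non-negative in the excluded case $m = 0$ (there $i = 1$ gives $(1, 0, r)$, consistent with $\xi_0^+$ being a genuine divisorial contraction rather than an isomorphism).
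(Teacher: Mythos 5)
Your proof is correct, but it takes a genuinely different route from the paper's. The paper argues sheaf by sheaf through the quiver model: by \cite[Proposition 5.3]{ny11a}, the sheaf $\mcO_C(-m-1)$ corresponds to a representation with dimension vector $(m, m+1)$ at the vertices $0,1$, and since $m+1 \geq 2 > 1$ such a representation admits no injection into a representation of dimension vector $(1,1,r)$; hence for $m \geq 1$ no $m$-stable sheaf $E$ with $\ch(E)=(r,0,-1)$ can have $\Hom(\mcO_C(-m-1), E) \neq 0$, so every $m$-stable sheaf is $(m+1)$-stable, and the reverse inclusion is argued symmetrically. You instead work at the level of the moduli stratification: the dimension vector attached to $v - ic_m$ is $(1-im,\, 1-i(m+1),\, r)$, which has a negative entry when $m, i \geq 1$, so $M^{m,m+1}(v)_i = \emptyset$, and Theorem \ref{thm:bn} then forces the higher Brill--Noether strata of both $M^m(v)$ and $M^{m+1}(v)$ to be empty. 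The two arguments rest on the same arithmetic --- indeed $(1-im,\, 1-i(m+1),\, r) = (1,1,r) - i(m,m+1,0)$, so your computation is the paper's observation about $\mcO_C(-m-1)$ in disguise --- but the mechanisms differ: the paper rules out destabilizing subobjects directly, needing the explicit quiver description of $\mcO_C(-m-1)$ but no stratification machinery, whereas you invoke the wall-crossing decomposition (Theorem \ref{thm:adhm}(4)) and the Grassmannian-bundle structure (Theorem \ref{thm:bn}) but only need Chern-character bookkeeping, which also makes the contrast with the $m=0$ case transparent. One point worth making explicit in your writeup: the paper states $M^m(v)_0 = M^m(v) \cap M^{m+1}(v)$ only on the minus side; the equality $M^{m+1}(v)_0 = M^m(v) \cap M^{m+1}(v)$ that you use is the dual statement, which follows from the plus-side destabilizing sequence of \cite{ny11b} (or from the $i=0$ case of the plus-side stratification), so it is available but deserves a sentence of justification.
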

\begin{proof}
Let $m \geq 0$ and $E \in M^m(r, 0, -1)$ be an $m$-stable sheaf. 
Assume that $E$ is not $(m+1)$-stable. 
Then we must have 
$\Hom(\mcO_C(-m-1), E) \neq 0$, 
and obtain the exact sequence 
\[
0 \to \Hom(\mcO_C(-m-1), E) \otimes \mcO_C(-m-1) \xrightarrow{ev} 
E \to E' \to 0
\]
as in (\ref{eq:destabE-}), 
for some $m$-stable and $(m+1)$-stable sheaf $E'$. 

On the other hand, by \cite[Proposition 5.3]{ny11a}, 
the sheaf $\mcO_C(-m-1)$ corresponds to the quiver representation 
\begin{equation} \label{eq:repOcm}
\xymatrix{
&\bC^m \ar@<-0.8ex>[rr]_-{0} & 
&\bC^{m+1} \ar@<-1.0ex>[ll]_{B_{1}, B_{2}} \ar@<-0.1ex>[ll] \ar@<+0.2ex>[ld] \\
& &0 \ar@<+0.2ex>[lu], & 
}
\end{equation}
with $B_1=(1_m, 0), B_2=(0, 1_m)$, 
where $1_m$ denotes the $(m \times m)$ identity matrix. 
Hence the representation (\ref{eq:repOcm}) cannot 
have an injection into the representation (\ref{eq:rep11r}) 
for $m >0$. 
This shows the inclusion 
$M^m(r, 0, -1) \subset M^{m+1}(r, 0, -1)$ for $m >0$. 
A similar argument shows that the opposite inclusion 
$M^{m+1}(r, 0, -1) \subset M^m(r, 0, -1)$. 
\end{proof}

Furthermore, we can describe these moduli spaces explicitly: 
\begin{lem} \label{lem:1stexam}
We have the diagram 
\[
\xymatrix{
&M^0(r, 0, -1) \ar@{=}[rd] 
& 
&M^1(r, 0, -1) \ar[ld] \\
& &M^{0, 1}(r, 0, -1) & 
}
\]
and isomorphisms 
\begin{align*}
&M_{\bP^2}(r, 0, -1) \cong 
M^0(r, 0, -1) \cong 
\Tot_{\bP^{(r-1)}}\left(
\mcO^{\oplus 2} \oplus \Omega
\right), \\ 
&M_{\hatP^2}(r, 0, -n) \cong 
M^1(r, 0, -1) \cong \Bl_{\bP^{(r-1)}}M^0(r, 0, -1), 
\end{align*}
where $\bP^{(r-1)}$ is embedded into $M^0(r, 0, -1)$ 
as the zero section. 
\end{lem}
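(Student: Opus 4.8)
The plan is to reduce both moduli spaces to their ADHM/quiver descriptions and analyze the single nontrivial wall, which sits at $m=0$. Since $k=0$, Theorem~\ref{thm:adhm}~(2) gives $M_{\bP^2}(r,0,-1)\cong M^0(r,0,-1)$, while the previous lemma (giving $M^1\cong M^m$ for all $m\ge 1$) together with Theorem~\ref{thm:adhm}~(3) gives $M_{\hatP^2}(r,0,-1)\cong M^1(r,0,-1)$. To compute $M^0$ explicitly I would use that $\ch_2=-1$ makes the ADHM vector space one-dimensional, so $[B_1,B_2]=0$ automatically and the ADHM equation becomes $ij=0$: the pair $(B_1,B_2)\in\bC^2$ is $GL(V)=\bC^*$-invariant and splits off as a trivial $\mcO^{\oplus 2}$ factor, while stability forces $i\ne 0$, and the $\bC^*$-quotient of $\{(i,j):ij=0,\ i\ne 0\}$ is the standard symplectic-reduction model of $T^*\bP^{(r-1)}=\Tot(\Omega)$. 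Hence $M^0\cong\Tot_{\bP^{(r-1)}}(\mcO^{\oplus 2}\oplus\Omega)$.

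Next I would pin down the wall at $m=0$. A dimension count from (\ref{eq:dimMm}) shows $\dim M^0(v-ic_0)<0$ for $i\ge 2$, so only the strata $i=0$ (the open locus $M^0\cap M^1$) and $i=1$ occur, and the codimension formula in the Proposition after Theorem~\ref{mmp} forces $\rk\mcW^-_1=1$, whence $\rk\mcW^+_1=r+1$ by (\ref{eq:difference}). Because $\mcW^-_1$ is a line bundle, every fiber of $\xi^-_0$ is $\Gr(1,\text{line})=\pt$, so $\xi^-_0$ is bijective; using normality of $M^{0,1}$ (a contraction of the smooth $M^1$ through $\xi^+_0$) and smoothness of $M^0$, I conclude $\xi^-_0$ is an isomorphism. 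Under it the stratum $M^0(v)_1\cong G^-_1\cong M^{0,1}(v)_1$, identified via the quiver data of dimension vector $(1,0,r)$, is the zero section $\bP^{(r-1)}\subset M^0$.

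For the identification $M^1\cong\Bl_{\bP^{(r-1)}}M^0$ I would invoke the local model from the proof of Theorem~\ref{derived}: with $i=1=\dim W^-$ the Grassmannian flip degenerates to $Y^-\cong Z\cong\bC^{r+1}$ and $Y^+\cong\Tot_{\bP^r}(\mcO(-1))\cong\Bl_0\bC^{r+1}$, so $\xi^+_0$ is formally locally the blow-up of $\bC^{r+1}\times U$ along $\{0\}\times U$. Globally, Theorem~\ref{thm:normal} gives $\mcN_{\bP^{(r-1)}/M^0}\cong\mcW^-_1\otimes\mcW^+_1$, with projectivization $G^+_1=\bP(\mcW^+_1)$, and $\mcN_{G^+_1/M^1}\cong\mcS^+_1\otimes\pi^{+*}\mcW^-_1$ restricting to $\mcO(-1)$ on each $\bP^r$-fiber, which is exactly the blow-up recognition criterion identifying $\xi^+_0\colon M^1\to M^{0,1}\cong M^0$ with the blow-up along the zero section.

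The main obstacle is this globalization: upgrading the formal-local blow-up picture to an honest isomorphism. One must verify that $(\xi^+_0)^*I_{\bP^{(r-1)}}$ is an invertible ideal — precisely the fiberwise $\mcO(-1)$ normal-bundle statement from Theorem~\ref{thm:normal} — so that the universal property of the blow-up produces a morphism $M^1\to\Bl_{\bP^{(r-1)}}M^0$, and then check it is an isomorphism by comparing the two $\bP^r$-bundle exceptional divisors.
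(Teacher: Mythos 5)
Your proposal is correct and follows essentially the same route as the paper: the paper's proof likewise reduces to the ADHM description with $\dim V=1$ (so the relation degenerates to $ij=0$ and stability to $i\neq 0$, giving $M^0\cong\Tot_{\bP^{(r-1)}}(\mcO^{\oplus 2}\oplus\Omega)$), and then identifies the blown-up locus $M^0(r,0,-1)_1$ with the zero section, namely the locus $\{B_1=B_2=j=0\}$, using the quiver representation of $\mcO_C(-1)$. The only difference is one of emphasis: the paper leaves implicit (as consequences of its Section 4 results) the facts that $\xi_0^-$ is an isomorphism and that $\xi_0^+$ is globally a blow-up, whereas you spell these out via the rank computation $\rk\mcW^-_1=1$, $\rk\mcW^+_1=r+1$, the formal-local Grassmannian-flip model, and the universal property of blowing up.
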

Note that when $r=1$, we recover 
the blow-up morphism $\hatC^2 \to \bC^2$ 
as the moduli spaces (cf. \cite[Theorem 2.16]{ny11a}). 

Before starting the proof, 
let us recall the ADHM description 
of the framed sheaves on $\bP^2$ 
(see \cite[Section 1]{ny11a} and \cite[Capter 2]{nak99} for the details). 
Let $V, W$ be vector spaces. 
An {\it ADHM data} is the data 
$X:=(B_{1}, B_{2}, i, j)$, 
where $B_{\alpha} \in \End(V)$, 
$i \in \Hom(W, V)$, and 
$j \in \Hom(V, W)$ satisfying the relation 
\begin{equation} \label{eq:adhmP2}
[B_{1}, B_{2}]+ij=0. 
\end{equation}
An ADHM data $X=(B_1, B_2, i, j)$ is called 
{\it stable} if there is no proper subspace $T \subset V$ 
such that $B_\alpha(T) \subset T$ for $\alpha=1, 2$, and 
$\Image(i) \subset T$. 
We have the moduli space of stable ADHM data 
as the quotient of the stable locus inside the affine space 
\[
\End(V)^{\times 2} \times \Hom(W, V) \times \Hom(V, W) 
\]
modulo the natural $\GL(V)$-action. 

Then the moduli space $M_{\bP^2}(r, 0, \ch_2)$ 
of torsion free framed sheaves on $\bP^2$ 
is isomorphic to the moduli space of stable ADHM data 
with $\dim V=-\ch_2, \dim W=r$ 
(cf. \cite[Chapter 2]{nak99})

\begin{proof}[Proof of Lemma \ref{lem:1stexam}]
Let us put $\dim V=1, \dim W=r$, 
and let $X=(B_1, B_2, i, j)$ be a stable ADHM data. 
Since $\dim V=1$, the relation (\ref{eq:adhmP2}) 
becomes $ij=0$, and the stability condition becomes 
$i \neq 0$. 
Hence the stable locus is given as 
\[
\left(
\End(V)^{\times 2} \times (W^{\vee} \setminus \{0\}) \times W) 
\right)
\cap \mu^{-1}(0), 
\]
where 
\[
\mu \colon \End(V)^{\times 2} \times W^\vee \times W \to \bC, \quad 
(B_{1}, B_{2}, i, j) \mapsto ij
\]
is the moment map. 
As $\GL(V)=\bC^*$ acts trivially on $\End(V)$, 
and on $W$ by weight $-1$, 
we have 
\[
M_{\bP^2}(r, 0, -1) \cong 
\Tot_{\bP^{(r-1)}}\left(
\mcO^{\oplus 2} \oplus \Omega
\right). 
\]

Next we determine the variety $M^1(r, 0, -1)$. 
Recall that we have an isomorphism 
(cf. \cite[Proposition 7.4]{ny11a})
\begin{equation} \label{eq:isomadhm}
\phi \colon 
M^0(r, 0, -1) \xrightarrow{\cong} M_{\bP^2}(r, 0, -1), \quad 
(B_{1}, B_{2}, d, i, j) \mapsto 
(dB_{1}, dB_{2}, di, j). 
\end{equation}
The locus blown-up by $\xi_{0}^+$ 
is given by 
\begin{equation} \label{eq:bn11r}
M^0(r, 0, -1)_1=\left\{
E \in M^0(r, 0, -1) : 
\hom(\mcO_{C}(-1), E)=1 
\right\}. 
\end{equation}
As the sheaf $\mcO_C(-1)$ corresponds to 
the representation (\ref{eq:repOcm}) with $m=0$, 
we see that the locus (\ref{eq:bn11r}) 
coincides with the subvariety 
\[
\bP^{(r-1)} \cong L:=\left(
B_{1}=B_{2}=j=0
\right)
\subset M_{\bP^2}(r, 0, -1) 
\]
under the isomorphism (\ref{eq:isomadhm}), 
which is nothing but the zero section. 
\end{proof}

\subsection{Hilbert scheme of points}
In this subsection, 
we consider the moduli spaces 
with Chern character $(1, 0, -n)$. 
In this case, the diagram (\ref{eq:nyintro}) 
connects the Hilbert schemes of points 
$\Hilb^n(\bC^2)$ 
and $\Hilb^n(\hatC^2)$. 
We first analyze the stability of ideal sheaves 
$I_{Z} \in \Hilb^n(\hatC^2)$ 
(it is an ideal sheaf 
of a length $n$ subscheme 
$Z \subset \hatP^2$ 
with $Z \cap l_{\infty} = \emptyset$). 
\begin{lem} \label{lem:ideal}
Let us take a point $I_{Z} \in \Hilb^n(\hatC^2)$ 
and let $k$ be a length of $Z \cap C$. 
Then $I_{Z}$ is $k$-stable but not $(k-1)$-stable. 
Furthermore, its destabilizing sequence 
for $(k-1)$-stability is given as 
\[
0 \to I_{W}(-C) \to I_{Z} \to \mcO_{C}(-k) \to 0 
\]
for some length $(n-k)$ zero dimensional subscheme 
$W \subset \hatP^2$ with $W \cap C = \emptyset$. 
\end{lem}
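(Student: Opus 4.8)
The plan is to analyze the ideal sheaf $I_Z$ by splitting off the part of $Z$ supported on the exceptional curve $C$. First I would decompose $Z$ scheme-theoretically: since $Z \cap l_\infty = \emptyset$, the length-$k$ part $Z \cap C$ and the remaining length-$(n-k)$ subscheme $W$ (disjoint from $C$) fit into an exact sequence relating $I_Z$ to $I_W$ and to a sheaf supported on $C$. The key computation is to identify the quotient $I_Z / I_W(-C)$, which should be a line bundle on $C$. To pin down its degree, I would use that $Z \cap C$ has length $k$ and compute Chern characters: twisting $I_W$ by $-C$ accounts for the curve direction, and comparing $\ch(I_Z)$ with $\ch(I_W(-C))$ forces the cokernel to be $\mcO_C(-k)$. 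This yields the claimed destabilizing sequence
\[
0 \to I_W(-C) \to I_Z \to \mcO_C(-k) \to 0.
\]

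Next I would verify the two stability assertions using Definition \ref{defin:m-st}. For $k$-stability I must check the three conditions: torsion-freeness outside $C$ is immediate since $I_Z$ is an ideal sheaf of a zero-dimensional subscheme. The vanishing conditions $\Hom(I_Z(-kC), \mcO_C(-1)) = 0$ and $\Hom(\mcO_C, I_Z(-kC)) = 0$ I would establish by applying $\Hom(-, \mcO_C(-1))$ and $\Hom(\mcO_C, -)$ to the destabilizing sequence (suitably twisted) and controlling the resulting terms via the known cohomology of line bundles $\mcO_C(j)$ on $C \cong \bP^1$, together with properties of $I_W$ whose support avoids $C$.

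To show $I_Z$ is \emph{not} $(k-1)$-stable, I would exhibit a nonzero map witnessing the failure: the surjection $I_Z \to \mcO_C(-k)$ in the sequence above, after the twist by $-(k-1)C$, should produce a nonzero element of $\Hom(I_Z(-(k-1)C), \mcO_C(-1))$ (equivalently a nonzero $\Hom(\mcO_C, I_Z(-(k-1)C))$), directly violating condition (1) or (2). The point is that the restriction $\mcO_C(-k) \otimes \mcO((k-1)\text{-twist})$ lands exactly at the boundary value making the relevant Hom nonzero, which is why $k$ is the precise threshold.

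The main obstacle I anticipate is the bookkeeping of the twists by multiples of $C$ and the resulting degrees of line bundles on $C$: since $C^2 = -1$ on $\hatP^2$, each twist by $\mcO(C)$ shifts the degree of a sheaf supported on $C$, and I must track these shifts carefully to confirm that the sheaf appearing is exactly $\mcO_C(-k)$ and that the stability thresholds fall precisely at $m = k$ versus $m = k-1$. A secondary subtlety is justifying that $W$ genuinely avoids $C$ and has the correct length $n - k$; this requires that the decomposition of $Z$ into its $C$-supported and $C$-disjoint parts is clean, which should follow from $Z$ being zero-dimensional so that its primary decomposition separates the components meeting $C$ from those that do not.
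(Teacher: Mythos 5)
Your proposal differs from the paper's proof in a way that is partly an improvement: for the assertion that $I_Z$ is $k$-stable but not $(k-1)$-stable the paper simply quotes the proof of \cite[Lemma 6.1]{kos17}, whereas you derive both stability claims from the exact sequence itself, which is more self-contained. Your mechanism for the failure of $(k-1)$-stability is exactly right: twisting the surjection $I_Z \twoheadrightarrow \mcO_C(-k)$ by $\mcO(-(k-1)C)$ and using $\mcO(C)|_C \cong \mcO_C(-1)$ gives a nonzero element of $\Hom(I_Z(-(k-1)C), \mcO_C(-1))$. However, your parenthetical ``equivalently a nonzero $\Hom(\mcO_C, I_Z(-(k-1)C))$'' is false: that group vanishes for every twist of an ideal sheaf by torsion-freeness, so it is only condition (1) of Definition \ref{defin:m-st} that fails.

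The genuine gap is your first step. The decomposition of $Z$ into the scheme-theoretic intersection $Z \cap C$ and a residual $W$ with $W \cap C = \emptyset$ does not exist in general, and primary decomposition cannot produce it: it separates components of $Z$ by support, but a component supported at a point of $C$ need not be contained in $C$, and then its intersection with $C$ is strictly smaller than the component itself. Concretely, in local coordinates $(x,t)$ on $\hatC^2$ with $C=\{x=0\}$, let $Z$ be the length-two subscheme with $I_Z=(t,x^2)$, a fat point transverse to $C$. Then $Z \cap C$ is the reduced point, so $k=1$ and $n-k=1$, but $Z$ is connected of length two, so no splitting $Z=(Z\cap C)\sqcup W$ exists; the destabilizing sequence is
\[
0 \to I_p(-C) \to I_Z \to \mcO_C(-1) \to 0,
\]
where $p \in C$ is the support of $Z$. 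Thus the disjointness $W \cap C=\emptyset$ (asserted in the statement, and likewise claimed without justification in the paper's own proof) actually fails for this $Z$; what survives is the stability assertion and the shape of the sequence with $W$ of length $n-k$. This gap hits your argument twice: the inclusion $I_W(-C) \subset I_Z$ and the identification of the cokernel presuppose the splitting, and your verification of $k$-stability uses $I_W|_C \cong \mcO_C$, which fails here (one has $I_p|_C \cong \mcO_C(-1)\oplus\mcO_p$, so the torsion and the degree drop must be controlled separately). The robust construction, which is what the paper does, is to define $\mcO_W$ as the kernel of $\mcO_Z \twoheadrightarrow \mcO_{Z\cap C}$ --- equivalently $I_W$ is the colon ideal $(I_Z : I_C)$ --- and extract the sequence from the resulting nine-term diagram built from $0 \to \mcO(-C) \to I_{Z\cap C} \to \mcO_C(-k) \to 0$ and $0 \to I_Z \to I_{Z\cap C} \to \mcO_W \to 0$; this requires no disjointness. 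A smaller point: comparing Chern characters does not ``force'' the cokernel to be $\mcO_C(-k)$, since a sheaf with that Chern character could be, e.g., $\mcO_C(-k-1)\oplus\mcO_p$; you must first identify the quotient as a line bundle on $C$ by a local computation, after which its degree is determined.
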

\begin{proof}
The first statement follows from the proof of 
\cite[Lemma 6.1]{kos17}. 
For the second statement, 
let $\mcO_{W}$ be the kernel of the surjection 
$\mcO_{Z} \to \mcO_{Z \cap C}$. 
Then we have the following diagram. 
\[
\xymatrix{
& &0 \ar[d] &0 \ar[d] &0 \ar[d] & \\
&0 \ar[r] 
&I_{W}(-C) \ar[r] \ar[d] 
&\mcO(-C) \ar[r] \ar[d] 
&\mcO_{W} \ar[r] \ar@{=}[d] &0 \\
&0 \ar[r] 
&I_{Z} \ar[r] \ar[d] 
&I_{Z \cap C} \ar[r] \ar[d] 
&\mcO_{W} \ar[r] &0 \\
& &\mcO_{C}(-k) \ar@{=}[r] \ar[d] &\mcO_{C}(-k) \ar[d] & & \\
& &0 &0 & &
}
\]
Furthermore, the sheaf $I_{W}(-C)$ is 
$0$-stable since $W \cap C=\emptyset$. 
Hence the second assertion follows. 
\end{proof}

\subsubsection{}
When $n=2$, we have the following diagram: 
\[
\xymatrix{
& &M^1(1, 0, -2) \ar[rd]_{\xi_{1}^-} \ar[ld]^{\xi_{0}^+} & 
&\Hilb^2(\hatC^2) \ar[ld]^{\xi_{1}^+} \\
&\Hilb^2(\bC^2) & &M^{0, 1}(1, 0, -2). & 
}
\]
The properties of the diagram are summarized as follows
(cf. \cite[Theorem 1.4]{kos17}): 
\begin{itemize}
\item $\xi_{1}^+$ contracts $\Hilb^2(C) \cong \bP^2$, 
and $\xi_{1}^-$ contracts $\bP^1$. 
\item The birational map 
$\Hilb^2(\hatC^2) \dashrightarrow M^1(1, 0, -2)$ 
is a standard flip. 
\item $\xi_{0}^+$ is the blow-up at 
the codimension two subvariety 
\[
\left\{
I_{Y} \in \Hilb^2(\bC^2) : Y \ni 0
\right\}
\cong \hatC^2. 
\]
\end{itemize}
The derived category 
$D^b(\Hilb(\hatC^2))$ 
has the following semi-orthogonal decomposition: 
\begin{align*}
D^b(\Hilb^2(\hatC^2))
&=\left\langle
D^b(\pt), D^b(M^1(1, 0, -2) 
\right\rangle \\
&=\left\langle
D^b(\pt), D^b(\hatC^2), D^b(\Hilb^2(\bC^2)) 
\right\rangle \\
&=\left\langle
D^b(\pt), D^b(\pt), D^b(\bC^2), D^b(\Hilb^2(\bC^2)) 
\right\rangle. 
\end{align*}

\subsubsection{}
Next we consider the case when $n=3$. 
First let us analyze geometry of 
\begin{equation} \label{eq:m3}
M^2(1, 0, -3) \xrightarrow{\xi_{2}^-} 
M^{2, 3}(1, 0, -3) \xleftarrow{\xi_{2}^+} 
\Hilb^3(\hatC^2). 
\end{equation}
By Lemma \ref{lem:ideal}, we have 
$\Exc(\xi_{2}^+)=\Hilb^3(C) \cong \bP^3$,  
\[
\Exc(\xi_{2}^-) \cong 
\bP(\Ext^1(\mcO(-C), \mcO_{C}(-3))) 
=\bP^2, 
\]
and the diagram (\ref{eq:m3}) 
is a standard flip. 

Next we analyze the geometry of the morphisms 
$\xi_{1}^{\pm}$. 
We have just seen that 
\[
M^2(1, 0, -3)=
\left(
\Hilb^3(\hatC^2) \setminus \Hilb^3(C)
\right) \cup 
\bP\Ext^1(\mcO(-C), \mcO_{C}(-3)). 
\]
Take a non-trivial extension 
\begin{equation} \label{eq:2stext}
0 \to \mcO_{C}(-3) \to E_{2} \to \mcO(-C) \to 0, 
\end{equation} 
which defines a $2$-stable sheaf 
$[E_{2}] \in M^2(1, 0, -3)$. 
We claim that we have the equality 
$\hom(E_{2}, \mcO_{C}(-2)) = 1$. 
Indeed, by applying the functor 
$\Hom(-, \mcO_{C}(-2))$ 
to the exact sequence (\ref{eq:2stext}), 
we have the exact sequence 
\[
0 \to \Hom(E_{2}, \mcO_{C}(-2)) 
\to \Hom(\mcO_{C}(-3), \mcO_{C}(-2)) 
\to \Ext^1(\mcO(-C), \mcO_{C}(-2)), 
\]
which proves the claim. 
By a standard diagram chasing, 
we can see that $E_{2}$ fits into the exact sequence 
\[
0 \to I_{p}(-C) \to E_{2} \to \mcO_{C}(-2) \to 0 
\]
for some point $p \in C$. 
Combining with Lemma \ref{lem:ideal}, 
we conclude that 
\begin{align*}
&\Exc(\xi_{1}^+)
=\bigcup_{p \in \hatC^2} 
\bP(\Ext^1(\mcO_{C}(-2), I_{p}(-C))), \\
&\Exc(\xi_{1}^-)
=\bigcup_{p \in \hatC^2} 
\bP(\Ext^1(I_{p}(-C), \mcO_{C}(-2))), 
\end{align*}
which are $\bP^2$-bundle, 
$\bP^1$-bundle 
over $\hatC^2$, respectively. 
The diagram 
\[
M^1(1, 0, -3) \xrightarrow{\xi_{1}^-} 
M^{1, 2}(1, 0, -3) \xleftarrow{\xi_{1}^+} 
M^2(1, 0, -3) 
\]
is a family of standard flips 
parametrized by $\hatC^2$. 

Finally, let us consider the morphism 
$\xi^+_0 \colon M^1(1, 0, -3) \to \Hilb^3(\bC^2)$. 
There are two types of objects in $M^1(1, 0, -3)$: 
\begin{enumerate}
\item an ideal sheaf $I_{Z}$, 
where $Z \subset \hatC^2$ 
is a  length $3$ zero dimensional subscheme 
with $Z \cap C=\{\pt\}$. 

\item an object 
$E_{1} \in \bP(\Ext^1(I_{p}(-C), \mcO_{C}(-2)))$, 
where $p \in \hatC^2$. 
\end{enumerate}

Let us consider a sheaf $E_{1}$ of type (2). 
By a computation similar as above, 
we can see that 
$\hom(E_{1}, \mcO_{C}(-1)) \leq 2$. 
Again, by a simple diagram chasing, 
we have the following possibilities: 
\begin{itemize}
\item When $\hom(E_{1}, \mcO_{C}(-1))=1$, 
$E_{1}$ fits into a exact sequence 
\[
0 \to I_{p, q}(-C) \to E_{1} \to \mcO_{C}(-1) \to 0 
\]
for some $q \in C$ with $q \neq p$. 

\item When $\hom(E_{1}, \mcO_{C}(-1))=2$, 
$E_{1}$ fits into an exact sequence 
\[
0 \to \mcO(-2C) \to E_{1} \to \mcO_{C}(-1)^{\oplus 2} \to 0. 
\]
\end{itemize}

As a summary, we list up the properties of 
the morphism $\xi_{0}^+ \colon M^1(1, 0, -3) \to \Hilb^3(\bC^2)$: 
\begin{itemize}
\item We have 
$\xi_{0}^+(\Exc(\xi_{0}^+)) 
=M^1(1, 0, -2) \cap M^2(1, 0, -2) 
=M^{1, 2}(1, 0, -2)$, 
which has a single singular point 
$o \in M^{1, 2}(1, 0, -2)$. 
\item For a point $p \in M^{1, 2}(1, 0, -2)$, 
the fiber of $\xi_{0}^+$ is given as 
\[
(\xi_{0}^+)^{-1}(p) \cong 
\begin{cases}
F_{o}:=\bP^2 \quad (p=o) \\
F_{p}:=\bP^1 \quad (p \neq o). 
\end{cases}
\]

\item The normal bundles of fibers of $\xi_{0}^+$ are given as 
\begin{align*}
&\mcN_{F_{o}/M^1(1, 0, -3)} \cong \Omega_{\bP^2}(-1)^{\oplus 2}, \\
&\mcN_{F_{p}/M^1(1, 0, -3)} \cong \mcO_{\bP^1}(-1) \oplus \mcO_{\bP^1}^{\oplus 4} 
\quad (p \neq o). 
\end{align*}

\item The moduli space $M^{1}(1, 0, -3)$ 
is isomorphic to 
$\Bl_{M^{1, 2}(1, 0, -2)}\Hilb^3(\bC^2)$. 
\end{itemize}

Using the semi-orthogonal decompositions 
for standard flips and blow-ups 
of codimension two Cohen-Macaulay subschemes 
(cf. \cite[3.1.2]{jc18}), 
we have 
\begin{align*}
&\quad D^b(\Hilb^3(\hatC^2)) \\
&=\left\langle
D^b(\pt), D^b(\pt), D^b(\pt), 
D^b(\bC^2), D^b(\bC^2), 
D^b(\Hilb^2(\bC^2)), 
D^b(\Hilb^3(\bC^2))
\right\rangle.
\end{align*}

\subsubsection{}
We give the first example 
where the Grassmannian variety 
(which is not the projective space) 
appears as a fiber: 
Let us consider a non-trivial extension 
\[
0 \to \mcO(-2C) \to E \to \mcO_{C}(-2)^{\oplus 2} \to 0. 
\]
The sheaf $E$ is $2$-stable but not $1$-stable, 
and has a Chern character $(1, 0, -5)$. 
Noting that 
$\ext^1(\mcO_{C}(-2), \mcO(-2C))=4$, 
we have 
$\Gr(2, 4) \subset M^2(1, 0, -5)$ 
and it is contracted by the morphism 
$\xi_{1}^+ \colon M^2(1, 0, -5) \to M^{1, 2}(1, 0, -5)$.


\end{document}